\newtheorem{thm}{Theorem}[section]
\newtheorem{defi}[thm]{Definition}
\newtheorem{prp}[thm]{Proposition}
\newtheorem{lma}[thm]{Lemma}
\newtheorem{rem}[thm]{Remark}
\newenvironment{rmk}{\begin{rem}\rm}{\end{rem}}
\numberwithin{equation}{section}
\newcommand{\labitem}[2]{%
\def\@itemlabel{{#1}}
\item
\def\@currentlabel{#1}\label{#2}}
\newcommand{\R}{{\mathbb R}}
\newcommand{\Z}{{\mathbb Z}}
\newcommand{\C}{{\mathbb C}}
\newcommand{\Or}{\mathcal{O}}
\newcommand{\Co}{\mathcal{C}}
\newcommand{\Hi}{\mathcal{H}}
\newcommand{\M}{\mathcal{M}}
\newcommand{\A}{\mathcal{A}}
\newcommand{\E}{\mathcal{E}}
\newcommand{\Sy}{\mathcal{S}}
\newcommand{\dbar}{\bar{\partial}}
\newcommand{\id}{\operatorname{id}}
\newcommand{\ind}{\operatorname{index}}
\newcommand{\Ker}{\operatorname{Ker}}
\newcommand{\Coker}{\operatorname{Coker}}
\newcommand{\coker}{\operatorname{Coker}}
\newcommand{\kd}{\operatorname{Ker}\dbar}
\newcommand{\cd}{\operatorname{Coker}\dbar}
\newcommand{\diag}{\operatorname{Diag}}
\newcommand{\capp}{\operatorname{cap}}
\newcommand{\hbp}{\hat{\bigoplus}}
\begin{document}

\title{A note on coherent orientations for exact Lagrangian cobordisms}

\author{Cecilia Karlsson}
\address{Department of Mathematics,
	 University of Oslo,
         Postboks 1053,
         Blindern,
         0316 Oslo, Norway}
\email{cecikarl{\@@}math.uio.no}

\begin{abstract}
   Let $L \subset \R \times J^1(M)$ be a spin, exact Lagrangian cobordism
   in the symplectization of the 1-jet space of a smooth manifold
   $M$. Assume that $L$ has cylindrical Legendrian ends $\Lambda_\pm
   \subset J^1(M)$. It is well known that the Legendrian contact
   homology of 
   $\Lambda_\pm$ can be defined with integer coefficients, via a signed
   count of pseudo-holomorphic disks in the cotangent bundle of $M$. It is also known that this count can be lifted to a mod 2 count of pseudo-holomorphic disks in the symplectization $\R \times J^1(M)$, and that
   $L$ induces a morphism between the $\Z_2$-valued DGA:s of the ends $\Lambda_\pm$ in a
   functorial way. We prove that this hold with integer coefficients as well.
   
   The proofs are built on the technique of orienting the moduli spaces of
   pseudo-holomorphic disks using
   capping operators at the Reeb chords. We give an expression for how the DGA:s   change if we change the capping operators.
   \end{abstract}

\thanks{The author was supported by the grant KAW 2015.0353 from the Knut and Alice Wallenberg foundation and the ERC grant Geodycon.}

\maketitle

\section{Introduction}

\subsection{Background}
Let $M$ be an $n$-dimensional manifold and consider the $1$-jet space
$J^1(M)= T^*M \times \R$ of $M$. This space can be given the structure
of a contact manifold, with contact form  $\alpha= dz - \sum_j y_j
dx_j$. Here $(x,y)$ are coordinates on $T^*M$ and $z$ is the
coordinate in the $\R$-direction. An $n$-dimensional submanifold
$\Lambda \subset J^1(M)$ is called \emph{Legendrian} if it is
everywhere tangent to the contact distribution $\xi = \Ker \alpha$,
and a \emph{Legendrian isotopy} is a smooth 1-parameter family of
Legendrian submanifolds. A major problem in contact geometry is to
determine whether two given Legendrian
submanifolds are \emph{Legendrian isotopic}, i.\ e. if there is a
Legendrian isotopy connecting them.
To that end, a number of \emph{Legendrian invariants} have
been introduced. These are objects associated to Legendrian
submanifolds, invariant under Legendrian isotopies.

One such invariant
is \emph{Legendrian contact homology}, which is the homology of a
differential graded algebra (DGA) associated to the Legendrian $\Lambda$. This
algebra is called the \emph{Chekanov-Eliashberg algebra of $\Lambda$},
and we denote it by $\A(\Lambda)$. It is a free, unital algebra
generated by the \emph{Reeb chords} of
$\Lambda$, which are flow segments of the \emph{Reeb vector field}
$\partial_z$, having their start and end points on $\Lambda$. We assume
that $\Lambda$ is chord generic, meaning that the \emph{Lagrangian
  projection} $\Pi_\C: J^1(M) \to T^*M$ projects the Reeb chords of
$L$ to isolated double points of $\Pi_\C(\Lambda)$. The differential
of the DGA is defined by counting certain pseudo-holomorphic disks.

Legendrian contact homology fits into the machinery of
\emph{Symplectic field theory}, introduced by Eliashberg,
Givental and Hofer in \cite{sft}. In particular, let $L$
be an exact Lagrangian cobordism in the \emph{symplectization} $ (\R \times
J^1(M), d(e^t\alpha))$ of $J^1(M)$. Assume that $L$ is asymptotic to
cylinders $\R \times \Lambda_\pm$ at $\pm \infty$, where
$\Lambda_\pm\subset J^1(M)$
are Legendrians. According to \cite{ratsft}, if we choose the coefficient ring to be given by $\Z_2$, then $L$ induces a DGA-morphism $\Phi_L:\A(\Lambda_+)
\to \A(\Lambda_-)$ in a functorial way. Here $\Phi_L$ is defined via a modulo 2 count of
pseudo-holomorphic disks with boundary on $L$. This is used in \cite{tobhonda} to
derive results about isotopy classes of exact Lagrangians with
prescribed boundary. More precisely, these results were derived
 from  explicit descriptions of $\Phi_L$ in the case
$L$ is induced by the trace of an elementary Legendrian isotopy.

That Legendrian contact homology can be defined over $\Z$, provided $\Lambda$ is spin, is proven in
\cite{orientbok}. In that paper the differential of $\A(\Lambda)$ is
defined by a count of rigid pseudo-holomorphic disks in the Lagrangian
projection,  with the disks having boundary
on $\Pi_\C(\Lambda)$. To get a signed count of these disks, it is shown that the moduli space of pseudo-holomorphic disks admits a coherent orientation. However, there is another way to define the
differential, which is more convenient if one wants to consider the
functorial properties in Symplectic field theory.  That method is to
count rigid pseudo-holomorphic disks in the symplectization
of $J^1(M)$, with the disks having boundary on $\R \times \Lambda$.

In
\cite{Georgios} it is proven that
these two different counts give the same
DGA, given that we
work with $\Z_2$-coefficients. We will prove that this also holds with
$\Z$-coefficients, provided that $\Lambda$ is spin.
More precisely, we will prove that the coherent orientation scheme
given in \cite{orientbok} can be lifted to give a coherent orientation for moduli spaces of pseudo-holomorphic disks in $\R \times
J^1(M)$ with boundary on $\R \times \Lambda$.
Then we prove that this lifted  orientation scheme allows us to extend
the definition of $\Phi_L$ from \cite{ratsft} to $\Z$-coefficients, provided that $L$ is spin and that
$\Lambda_\pm$ are given the induced spin structure as boundary of
$L$. That this lift can be performed seems in particular important if
one wants to relate SFT theories with Floer theories, for example via
Seidel's isomorphisms which briefly says that if $\Lambda$ admits an
exact Lagrangian filling $L$, then there is an isomorphism between
$H_*(L)$ and the
linearized Legendrian contact cohomology of $\Lambda$ with respect to
the augmentation induced by $L$. Compare with \cite{Georgios} and \cite{georgios-roman}.  
For other examples of applications of such a signed lift which allows $\Phi_L$ to be defined over the integers, see e.g.\ \cite{floerlag}, \cite{floerlag2}, \cite{toblose}, \cite{el}. Note that the existence of such a signed lift is indicated but not proved in these papers. 
Yet another motivation for understanding DGA-morphisms with coefficients in $\Z$ comes from the connection between Legendrian contact homology and homological mirror symmetry, together with the machinery in \cite{rm}. 
%

The coherent orientation scheme for the moduli spaces of pseudo-holomorphic
disks will be defined by using something called \emph{capping
  operators}, which are $\dbar$-operators defined on the 1-punctured
unit disk in $\C$ with trivialized Lagrangian boundary
conditions. Using the DGA-morphism  induced by the trivial cobordism
$\R \times \Lambda$, we will derive an expression of how the DGA
changes if we change capping operators. In this way we can relate the
 orientation scheme of pseudo-holomorphic disks in $T^*M$ given in
 \cite{orientbok} with the one given in
 \cite{orienttrees}.

 The orientation scheme defined in \cite{orienttrees} is adapted to the situation
 when the differential in Legendrian contact homology is defined by
 counting rigid \emph{Morse flow trees} instead of pseudo-holomorphic
 disks. We refer to \cite{trees} for the definition of these trees,
 and for the proof that the trees can replace the pseudo-holomorphic disks
 in the definition of the differential if we work with
 $\Z_2$-coefficients. In \cite{orienttrees}, this result is extended
 to also hold for $\Z$-coefficients. The advantage of using Morse flow trees instead of 
 pseudo-holomorphic disks is that the former ones can be found using
 finite-dimensional flow techniques, while the latter ones give rise
 to non-linear PDE:s, which in general are hard to solve.
 In \cite{tobhonda} it is shown that one can use Morse flow trees to
compute the DGA-morphism induced by an exact Lagrangian cobordism, in
the case when the coefficients are given by $\Z_2$. This is one of the
reasons why the DGA-morphisms induced by traces of
elementary Legendrian isotopies can be described explicitly when $n=1$. We sketch an argument that Morse flow trees can be used to compute DGA-morphisms also with integer coefficients, given our orientation scheme of moduli spaces.


\subsection{Organization of the paper}
In Section \ref{sec:mainresults} we give a definition of
the DGA associated to a Legendrian $\Lambda \subset J^1(M)$, and the
DGA-morphism induced by an exact Lagrangian cobordism. We also state
the main theorems.
In Section \ref{sec:holo} we recall the definition of punctured
pseudo-holomorphic disks, and give a more detailed definition of the
relevant moduli spaces.  In Section \ref{sec:oc} we fix orientation
conventions, and prove that these conventions make it possible to define Legendrian contact homology with integer coefficients in the symplectization setting. In  Section \ref{sec:cob-pf} we prove that this also gives the desired results for the DGA-morphisms induced by exact Lagrangian cobordisms. In Section \ref{sec:trees} we discuss how the
orientation scheme can be used to orient the moduli space of
Morse flow trees associated to exact Lagrangian cobordisms.

\subsection*{Acknowledgments}
This paper is built on parts of the author's PhD thesis at Uppsala University. The work was further developed when the author was a postdoc at the University of Nantes, and completed while the author was a postdoc at Stanford University.

The author would like to thank  Tobias Ekholm and Paolo Ghiggini for useful discussions.

\section{Main results}
\label{sec:mainresults}
Here we formulate the main results. To be able to do this, we first
need to introduce some more notation.

\subsection{Legendrian contact homology}
As outlined in the Introduction, there are two different ways of defining the differential $\partial$ of $\A(\Lambda)$. One method is to compute punctured, rigid, pseudo-holomorphic disks in $T^*M$ with boundary on  $\Pi_\C(\Lambda)$. I.e., the differential is defined on generators $a$ by
\begin{equation*}
 \partial_l(a) = \sum_{\dim \M_{l,\Lambda}(a,{\bf b}) = 0} |\M_{l,\Lambda}(a, {\bf b})| {\bf b},
\end{equation*}
and extended by the Leibniz rule to the rest of the algebra. Here ${\bf b} = b_1\dotsm b_m$ is a word of Reeb chords, $\M_{l,\Lambda}(a,{\bf b})$ is the moduli space of rigid pseudo-holomorphic punctured disks with a positive puncture at $a$, negative punctures at $b_1,\dotsc,b_m$, and with boundary on $\Pi_{\C}(\Lambda)$, and $|\M_{l,\Lambda}(a, {\bf b})|$ denotes the algebraic count of disks in the moduli space.  We refer to Section \ref{sec:holo} for more details.

We denote the DGA defined in this way by $(\A(\Lambda), \partial_l; R)$, where $R$ indicates the coefficient ring.  In \cite{pr}  it is  proven that for a generic choice of compatible almost complex structure on  $T^*M$, this differential satisfies $\partial_l^2 =0$, and the homology of this complex gives a well-defined Legendrian isotopy invariant if we choose the coefficient ring to be $\Z_2$. In \cite{orientbok} these results were extended to hold for $\Z$-coefficients in the case when $\Lambda$ is spin. In the special case $n=1$ and $M=\R$, these results were first established in \cite{chekanov} for the case of $\Z_2$-coefficients, and in \cite{sabloffng} for $\Z$-coefficients.

The other method of computing the differential, which was discussed in \cite{sft}, and where the details were worked out in \cite{eliinv} for $n=1$, and further developed in \cite{ratsft} for higher dimensions, is to count rigid pseudo-holomorphic disks in the symplectization of $J^1(M)$. That is, in this case the differential is defined by
\begin{equation*}
 \partial_s(a) = \sum_{\dim \hat\M_{s,\Lambda}(a,{\bf b}) = 1} |\hat\M_{s,\Lambda}(a, {\bf b})/\R| {\bf b}
\end{equation*}
on generators, and again extended by the Leibniz rule to the whole
algebra. Here $\hat\M_{s,\Lambda}(a,{\bf b})$ is the moduli space of
punctured pseudo-holomorphic disks with boundary on $\R \times \Lambda$,
having a positive puncture asymptotic to a strip over the Reeb chord
$a$ at $t=+ \infty$, and having negative punctures asymptotic to
strips over the Reeb chords $b_1,\dotsc,b_m$ at $t=-\infty$. Moreover,
we assume that the given almost complex structure is cylindrical, so that we get an induced $\R$-action on $\hat
\M_{s,\Lambda}$, given by translation in the $t$-direction. See Section
\ref{sec:holo}.
We let 
\begin{equation*}
 \M_{s,\Lambda}=\hat\M_{s,\Lambda}(a, {\bf b})/\R
\end{equation*}
be the space where we have divided out this $\R$-action.  For a generic choice of cylindrical almost complex structure we have $\partial_s^2=0$, given that we are using $\Z_2$-coefficients, 
and the homology of $\A(\Lambda)$ is invariant under Legendrian isotopies. See \cite{ratsft}.

In \cite{Georgios} it is shown that under certain, not too restrictive, choices of almost complex structures of $T^*M$ and $\R \times J^1(M)$ we have that 
 $(\A(\Lambda), \partial_s;\Z_2) \simeq (\A(\Lambda), \partial_l;
 \Z_2)$, 
where the isomorphism is induced by the projection
\begin{equation*}
 \pi_P: \R \times (T^*M \times \R) \to T^*M.
\end{equation*}
In particular, it is proven that the induced map
\begin{align*}
\pi_P: \M_{s, \Lambda}(a, {\bf b}) \to \M_{l,\Lambda}(a, {\bf b}),
                                     \quad  u \mapsto  \pi_P(u),
\end{align*}
is a diffeomorphism. In the present paper we extend this result to $\Z$-coefficients, 
by proving that there is a choice of orientation conventions so that
the coherent orientation scheme given for $\M_{l,\Lambda}$ in
\cite{orientbok} can be lifted under $\pi_P$ to give a coherent
orientation scheme for $\M_{s,\Lambda}$. Compare [\cite{Georgios}, Remark 2.4].

\begin{thm}\label{thm:georgios}
 Let $J_P$ and $\tilde J_p$ be almost complex structures on $T^*M$ and $\R \times J^1(M)$, respectively, satisfying the assumptions in [\cite{Georgios}, Theorem 2.1]\footnote{The conditions are that $(D\pi_p)\tilde J_p = J_p(D\pi_p)$, and that $J_p$ is regular (the 0-dimensional moduli spaces $\M_{l,\Lambda}$ are transversely cut out) and integrable in neighborhoods of the double points of $\Pi_\C(\Lambda)$.}. Further assume that $\Lambda \subset J^1(M)$ is a spin Legendrian submanifold. Then there are choices of coherent orientations of the moduli spaces $\M_{s,\Lambda}(a, {\bf b})$  and $\M_{l,\Lambda}(a, {\bf b})$ so that 
 \begin{align*}
\pi_P: \M_{s, \Lambda}(a, {\bf b}) \to \M_{l,\Lambda}(a, {\bf b}),
                                     \quad u \mapsto  \pi_P(u),
\end{align*}
is orientation preserving. Moreover, for $i=s,l$ we have that 
\begin{equation}\label{eq:differential}
 \partial_i(a) = \sum_{\dim \M_{i,\Lambda}(a,{\bf b}) = 0} |\M_{i,\Lambda}(a, {\bf b})| {\bf b}
\end{equation}
 satisfies 
 $\partial_i^2 =0$. Here $ |\M_{i,\Lambda}(a, {\bf b})| $ denotes the
 algebraic number of disks in the moduli space, where the signs of the
 disks come from the coherent orientation scheme.
\end{thm}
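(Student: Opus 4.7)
My plan is to lift the capping-operator orientation scheme of \cite{orientbok} from $\M_{l,\Lambda}$ to $\M_{s,\Lambda}$ via the diffeomorphism $\pi_P$, and then deduce $\partial_i^2=0$ in both settings from one coherent choice. First I recall from \cite{orientbok} that at each Reeb chord $c$ one fixes a \emph{capping operator} $\Co_c$: a $\dbar$-operator on the once-punctured unit disk with trivialized Lagrangian boundary condition and asymptotic behavior at the puncture prescribed by $c$. For $u \in \M_{l,\Lambda}(a,{\bf b})$ the linearization $D_u^l$, glued to the capping operators at $a$ and at each $b_j$, yields a Fredholm operator on a closed disk whose determinant line carries a canonical orientation induced by the spin structure on $\Lambda$. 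The gluing isomorphism of determinant lines transports this orientation back to $\det D_u^l$, orienting $\M_{l,\Lambda}(a,{\bf b})$ coherently.

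Next I set up an analogous scheme in $\R \times J^1(M)$. Using the $\tilde J_P$-invariant splitting
\begin{equation*}
 T(\R \times J^1(M))|_{\R \times \Lambda} = \ul{\C}\langle \partial_t, \partial_z\rangle \oplus \bigl(T(T^*M)|_\Lambda\bigr),
\end{equation*}
with $\tilde J_P \partial_t = \partial_z$, I choose capping operators $\Co_c^\pm$ on the once-punctured disk that split as a fixed invertible model operator on the $\ul{\C}$-summand (canonically oriented, the boundary condition being $\R\langle\partial_t\rangle$) together with an operator on the $T(T^*M)$-summand that under $\pi_P$ equals $\Co_c$. The spin structure on $\R \times \Lambda$ is the one induced from $\Lambda$ and is compatible with this splitting, so the canonical orientation on the closed-disk operator factors accordingly.

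The main technical point is the corresponding splitting for a disk $u \in \hat\M_{s,\Lambda}(a,{\bf b})$. Modulo a compact perturbation, $D_u^s$ splits into a Cauchy-Riemann operator on the $\ul{\C}$-summand of index one, whose kernel is spanned by the translation direction $\partial_t$, and an operator on the $T(T^*M)$-summand identified under $\pi_P$ with $D^l_{\pi_P(u)}$. Gluing in the chosen capping operators produces a canonical isomorphism
\begin{equation*}
 \det D_u^s \;\cong\; \det D^l_{\pi_P(u)} \otimes \det\bigl(\R\langle\partial_t\rangle\bigr).
\end{equation*}
Dividing out the $\R$-action on $\hat\M_{s,\Lambda}$ cancels the extra factor, so the induced orientation on $\M_{s,\Lambda}(a,{\bf b})$ coincides under $\pi_P$ with that on $\M_{l,\Lambda}(a,{\bf b})$, proving the first assertion.

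Coherence in the symplectization setting then follows from coherence in the Lagrangian projection setting by this identification: broken configurations on the boundary of a one-dimensional component of $\M_{s,\Lambda}$ correspond under $\pi_P$ to broken configurations on the boundary of the corresponding component of $\M_{l,\Lambda}$, with matching induced boundary orientations. Once coherence is in hand, both $\partial_s^2 = 0$ and $\partial_l^2 = 0$ follow from the standard argument that contributions to $\partial_i^2$ arise in sign-canceling pairs from the boundary of one-dimensional moduli spaces. I expect the principal obstacle to be making the above splitting of $D_u^s$ explicit enough that the gluing isomorphism holds on the nose, and in particular tracking the sign produced by quotienting by the $\R$-action so that it is consistent with the conventions of \cite{orientbok}.
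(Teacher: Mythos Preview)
Your overall strategy matches the paper's: both define the orientation on $\M_{s,\Lambda}$ by lifting the capping orientation on $\M_{l,\Lambda}$ through the splitting $T(\R\times J^1(M)) \simeq \ul{\C}\oplus T(T^*M)$. The paper makes this explicit in Definition~\ref{def:capping}, declaring the capping orientation of $u\in\hat\M_{s,\Lambda}$ to be $\det\dbar_{\pi_P(u)}\wedge\partial_t$, after first checking (Proposition~\ref{cor:capiso}) that the capping operators extended by the identity in the $\R_t$-direction are isomorphisms there with a small positive weight. So for the first assertion your proposal and the paper agree.

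The genuine difference is in how $\partial_i^2=0$ is obtained. You deduce $\partial_s^2=0$ from $\partial_l^2=0$ by pushing the coherence statement through $\pi_P$. In fact your argument can be made even shorter: once the signs on zero-dimensional moduli spaces are \emph{defined} to agree under $\pi_P$, one has $\partial_s=\partial_l$ as maps on the algebra, and $\partial_s^2=0$ is immediate from $\partial_l^2=0$; there is no need to analyze one-dimensional symplectization moduli spaces or their boundary orientations at all. The paper, by contrast, does \emph{not} take this shortcut. It reproves $\partial_l^2=0$ and proves $\partial_s^2=0$ from scratch (Lemma~\ref{lma:lma3}), via a lengthy comparison of two gluing sequences for the fully capped glued problem, carefully tracking the extra $\R_t$-summands that appear when gluing weighted problems in the symplectization. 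Two reasons for this: the paper uses orientation conventions that differ from \cite{orientbok} (Remarks~\ref{rmk:kan}, \ref{rmk:dumsign}, and Section~\ref{sec:clock}), so it cannot simply cite \cite{orientbok} for $\partial_l^2=0$; and, more importantly, the symplectization gluing calculus established in Lemma~\ref{lma:lma3} is precisely the machinery reused in the proofs of Theorems~\ref{thm:kalman} and~\ref{thm:functorial}, where there is no Lagrangian-projection picture to fall back on. Your route is cleaner for Theorem~\ref{thm:georgios} in isolation; the paper's route is what makes the cobordism results go through.
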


\begin{rmk}
  We also get that the stable tame isomorphism class of the DGA:s is
  invariant under Legendrian isotopies. Compare [\cite{orientbok},
  Section 4.3].
\end{rmk}

\begin{rmk}\label{rmk:dumsign}
  We will use slightly different orientation conventions than in 
 \cite{orientbok}, to simplify the expression of the differential. In
 that paper it is instead of \eqref{eq:differential} given by
 \begin{equation*}
 \partial_l(a) = \sum_{\dim \M_{l,\Lambda}(a,{\bf b}) = 0} (-1)^{(n-1)(|a|+1)}|\M_{l,\Lambda}(a, {\bf b})| {\bf b}.
\end{equation*}
Compare with the discussion in Subsection \ref{sec:clock}.
\end{rmk}

\subsection{Exact Lagrangian cobordisms}\label{sec:elc}
Here we describe how an exact Lagrangian
submanifold $L \subset \R \times J^1(M)$ with cylindrical Legendrian
ends induces a morphism between the DGA:s of the ends. 

\begin{defi}\label{def:exactcob}
Let $\Lambda_+, \Lambda_- \subset J^1(M)$ be Legendrian submanifolds.  
An \emph{exact Lagrangian cobordism from $\Lambda_+$ to $\Lambda_-$} is an exact Lagrangian submanifold $L$ of the symplectization of $J^1(M)$, satisfying
\begin{align*}
& \E_+(L) := L \cap ((T,\infty) \times J^1(M)) = (T, \infty) \times \Lambda_+, \\
& \E_-(L) := L \cap ((-\infty, -T) \times J^1(M)) = (-\infty, -T) \times \Lambda_-, 
\end{align*}
for some $T>0$, and so that 
\begin{enumerate}
 \item each function $f$ that satisfies $d f = e^t \alpha|_L$, also satisfies that $f|_{\E_\pm(L)}$ is constant,
 \item $L \setminus (\E_+(L) \cup \E_-(L))$  is compact with boundary $\Lambda_+ - \Lambda_-$.
\end{enumerate}
\end{defi}

An exact Lagrangian cobordism $L$ induces a DGA-morphism 
\begin{equation*}
\Phi_L:(\A(\Lambda_+), \partial_+; \Z_2) \to   (\A(\Lambda_-), \partial_-; \Z_2),\end{equation*}
where $\partial_{\pm}$ denotes the differential $\partial_s$ associated to $\A(\Lambda_{\pm})$.
Indeed, we can define $\Phi_L$  by 
 \begin{equation}\label{eq:phidisk}
  \Phi_L(a) = 
  \sum_{\dim \M_L (a, {\bf b})=0} |\M_L(a, {\bf b})| {\bf b},  \qquad {{\bf b} = b_1\dotsm b_m, } 
 \end{equation}
if $a$ is a generator, and extend it to the rest of the algebra by
\begin{align}
 \label{eq:ex1}  &\Phi(a +b ) = \Phi(a) + \Phi(b) \\
 \label{eq:ex2} &\Phi(ab) = \Phi(a)\Phi(b).
\end{align}
See  \cite{ratsft} and [\cite{tobhonda}, Section 3.5].
Here  $\M_L(a,{\bf b})$ denotes the moduli space of punctured pseudo-holomorphic disks with boundary on $L$, positive puncture mapped asymptotically to a strip over the Reeb chord $a$ at  $t = + \infty$, negative punctures mapped asymptotically to strips over the Reeb chords $b_1,\dotsc,b_m$ at $t=-\infty$, and $|\M_L(a, {\bf b})|$ is the modulo $2$ count of elements. See Section \ref{sec:holo}. Note that in [\cite{tobhonda}, Section 3.5] the results are only stated for $n=1$, but tracing the proofs one sees that they can be extended word-by-word to arbitrary $n$.

We prove that we can replace the modulo 2 count by a signed count, so that $\Phi_L$ gives a DGA-morphism also with $\Z$-coefficients.

\begin{thm}\label{thm:kalman}
Let  $L\subset \R \times J^1(M)$ be a spin, exact Lagrangian cobordism from $\Lambda_+$ to $\Lambda_-$. Then there are choices of coherent orientations of the moduli spaces $\M_{L}(a,{\bf b})$,  $\M_{s,\Lambda_+}(a,{\bf b})$ and $\M_{s,\Lambda_-}(a,{\bf b})$ so that  
 \begin{equation*}
   \Phi_L:(\A(\Lambda_+), \partial_+; \Z) \to   (\A(\Lambda_-), \partial_-; \Z)                                                                                                                                                                    \end{equation*}
 defined by \eqref{eq:phidisk} -- \eqref{eq:ex2} is a DGA-morphism. Now $|\M_L(a, {\bf b})|$ represents the algebraic count of disks in the moduli space. 
\end{thm}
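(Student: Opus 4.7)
The plan is to construct a coherent orientation on $\M_L(a,{\bf b})$ by means of capping operators compatible with those already used to orient $\M_{s,\Lambda_\pm}(a,{\bf b})$ from Theorem \ref{thm:georgios}, and then to verify the chain map equation $\Phi_L \circ \partial_+ = \partial_- \circ \Phi_L$ by a signed analysis of the boundary of 1-dimensional moduli spaces. First I would fix a spin structure on $L$; since $L$ is cylindrical at the ends, this restricts to spin structures on $\Lambda_\pm$ which are (by hypothesis) the ones used to orient $\M_{s,\Lambda_\pm}$. Using these spin structures, I would trivialize the Lagrangian boundary conditions along $L$ up to homotopy and attach the same capping operators at Reeb chord punctures as in Section \ref{sec:oc}, so that orientations on $\M_L(a,{\bf b})$ arise from the same determinant-line prescription as in the cylindrical case, just with the cylindrical linearization replaced by the linearization on $L$.

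Next I would argue coherence by the standard gluing/deformation procedure: one shows that the determinant line of the linearized $\dbar_L$-operator, stabilized by capping operators at every puncture, is canonically oriented as a real line, and that this orientation descends to an orientation of $\M_L(a,{\bf b})$ once one quotients by translations/automorphisms where applicable. Because $L$ and its ends share the same capping data, a disk $v \in \M_L(\mathbf{c},{\bf b})$ glued at the positive end with $u_+ \in \M_{s,\Lambda_+}(a,\mathbf{c})$ yields a determinant line that factors as a tensor product, and similarly at the negative end.

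The main work is then the sign computation at the boundary of a 1-dimensional moduli space $\M_L(a,{\bf b})$. By SFT compactness the boundary consists of two-level buildings of exactly two kinds: (i) $u_+ \in \M_{s,\Lambda_+}(a,\mathbf{c})$ glued to $v \in \M_L(\mathbf{c},{\bf b})$, contributing to $\Phi_L \circ \partial_+$, and (ii) $v \in \M_L(a,\mathbf{c})$ glued to some $u_- \in \M_{s,\Lambda_-}(c_i,{\bf b}_i)$ replacing a negative puncture of $v$, contributing to $\partial_- \circ \Phi_L$. For each type I would compute the gluing sign by comparing the boundary orientation of $\M_L(a,{\bf b})$ with the product orientation on the broken configuration. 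The capping-operator formalism reduces this to a purely algebraic calculation involving $(-1)^{|a|}$-type signs determined by the gradings of the Reeb chords at the gluing point, together with Koszul signs arising from reordering negative punctures; these are the same signs that appear in the Leibniz-extension \eqref{eq:ex2} of $\Phi_L$ to products ${\bf c} = c_1\cdots c_k$. The hard part will be checking that the boundary orientations at the two types of broken configurations carry opposite signs, once one accounts for both the product signs in $\Phi_L \circ \partial_+(a)$ and for the position of the broken puncture in $\partial_- \circ \Phi_L(a)$; this is precisely where the orientation convention of Remark \ref{rmk:dumsign} is needed to avoid spurious $(-1)^{(n-1)(|a|+1)}$ factors.

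Once the sign cancellation is established for the two kinds of boundary components of each 1-dimensional $\M_L(a,{\bf b})$, summing over all such moduli spaces yields $\Phi_L(\partial_+ a) = \partial_-(\Phi_L(a))$ on generators. The fact that $\Phi_L$ is multiplicative is true by definition via \eqref{eq:ex2}, so the chain map property extends from generators to the whole algebra, giving the desired DGA-morphism over $\Z$. Invariance/functoriality up to chain homotopy can be obtained by the analogous analysis on the 1-parameter family of almost complex structures, as in the $\Z_2$ case from \cite{ratsft}, with the sign analysis of the previous step applied to the parametric moduli spaces.
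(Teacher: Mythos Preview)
Your overall strategy matches the paper's: orient $\M_L$ via the same capping operators used for $\Lambda_\pm$, then prove $\Phi_L\circ\partial_+=\partial_-\circ\Phi_L$ by analyzing the boundary of $1$-dimensional $\M_L(a,{\bf b})$ and checking that the two kinds of broken buildings contribute with opposite signs.

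There is, however, a genuine imprecision in your description of the boundary pieces that hides exactly the place where the work lies. Your type (i) is written as ``$u_+\in\M_{s,\Lambda_+}(a,\mathbf{c})$ glued to $v\in\M_L(\mathbf{c},{\bf b})$'', as though a single $L$-disk with positive punctures $\mathbf{c}$ exists. It does not: disks have one positive puncture. The correct picture (and the one the paper uses) is that a rigid $u_0\in\M_{s,\Lambda_+}(a,b_1\cdots b_m)$ is glued simultaneously at \emph{each} of its $m$ negative punctures to rigid disks $v_i\in\M_L(b_i,b^i_1\cdots b^i_{m_i})$, $i=1,\dotsc,m$. This multi-gluing is the essential asymmetry between the two boundary types, and it is what makes the sign computation substantially more delicate than the $\partial^2=0$ argument: one must compare an iterated gluing sequence involving $m$ simultaneous gluings (the paper's sequences \eqref{eq:glu1} and \eqref{eq:glu2}) with the capping sequence for the glued disk, and track the accumulated permutation signs $\sigma_1,\sigma_2,\sigma_3$ coming from reordering the many capping kernels/cokernels and the $m$ newborn $\R_t$-summands. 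The analogous computation on the $\partial_-$ side involves only a single gluing and yields the sign \eqref{eq:tildesigma}; the theorem follows only after one verifies that the two totals differ exactly by the Leibniz sign $(-1)^{\sum_{i<j}|c_i|}$.

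Your proposal gestures at this (``purely algebraic calculation involving $(-1)^{|a|}$-type signs'') but does not carry it out, and getting it right requires the specific bookkeeping conventions set up in Section~\ref{sec:oc}: the clockwise ordering of capping operators in \eqref{eq:caporient}, the gluing convention \eqref{eq:gluconv}, and the extra $(-1)^{|p|+n+d_A+1}$ in Definition~\ref{def:capor}. These are not incidental; as Section~\ref{sec:clock} explains, they are chosen precisely so that the multi-gluing sign and the single-gluing sign cancel. So while your plan is the right one, the proof is really the sign computation for the $m$-fold gluing, and that step is missing.
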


Moreover, $\Phi_L$ satisfies SFT-functorality. That is, let $L_1$, $L_2 \subset \R \times J^1(M)$ be two exact Lagrangian cobordisms such that $L_1$ goes from $\Lambda_0$ to $\Lambda_1$ and $L_2$ goes from $\Lambda_1$ to $\Lambda_2$. Then we can form the \emph{concatenation} $L_1 \# L_2$, by gluing the negative end of $L_1$ to the positive end of $L_2$, as explained in [\cite{tobhonda}, Section 1.2].  This gives an exact Lagrangian cobordism from $\Lambda_0$ to $\Lambda_2$, which satisfies 
   $  \Phi_{L_1\#L_2} = \Phi_{L_2} \circ \Phi_{L_1}$
  as a DGA-morphism from $(\A(\Lambda_0), \Z_2)$ to $(\A(\Lambda_2), \Z_2)$. See [\cite{tobhonda}, Lemma 3.13]. 

We prove that the functorial properties of $\Phi$ continue to hold with integer coefficients.
\begin{thm}\label{thm:functorial}
  Assume that $L_1,L_2 \subset \R \times J^1(M)$ are two spin, exact Lagrangian cobordisms with fixed spin structures. Assume that $L_1$ goes from $\Lambda_0$ to $\Lambda_1$ and that $L_2$ goes from $\Lambda_1$ to $\Lambda_2$. Then there are choices of coherent orientations of the moduli spaces $\M_{s,\Lambda_i}(a, {\bf b})$, $i=0,1,2$, $\M_{L_i}(a, {\bf b})$, $i=1,2$, and $\M_{L_1\#L_2}(a, {\bf b})$ so that 
  \begin{equation}
    \label{eq:funct1}
    \Phi_{L_2} \circ \Phi_{L_1} = \Phi_{L_1\#L_2}
  \end{equation}
  as DGA-morphisms from $(\A(\Lambda_0), \Z)$ to $(\A(\Lambda_2), \Z)$.
  Moreover, if $\Lambda\subset J^1(M)$ is a spin Legendrian then there are choices of coherent orientations of the moduli spaces $\M_{s,\Lambda}(a, {\bf b})$, $\M_{\R \times \Lambda}(a, {\bf b})$ so that 
  \begin{equation}
    \label{eq:funct2}
    \Phi_{\R \times \Lambda} = \id.
  \end{equation}
\end{thm}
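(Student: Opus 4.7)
The plan is to prove the two parts separately, building on the capping-operator framework developed in Sections \ref{sec:oc}--\ref{sec:cob-pf} and on Theorem \ref{thm:kalman}.

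For \eqref{eq:funct2}, I would start by observing that $\R\times\Lambda$ is $\R$-invariant, so for any $u\in\M_{\R\times\Lambda}(a,\mathbf{b})$ the translate $u+c$ also lies in the moduli space. Hence any element of a rigid ($0$-dimensional) component must itself be $\R$-invariant, which forces it to be a trivial strip over a Reeb chord. The only non-empty rigid moduli space is therefore $\M_{\R\times\Lambda}(a,a)$, consisting of the single trivial strip $\R\times a$. I would then choose the capping operators for $\M_{\R\times\Lambda}(a,\mathbf{b})$ to agree with those used for $\M_{s,\Lambda}(a,\mathbf{b})$ (which is consistent because the spin structure on $\Lambda$ viewed as the boundary of $\R\times\Lambda$ is the original one), and verify by the linear-gluing formula from Section \ref{sec:oc} applied to the pair of capping operators glued along the trivial-strip linearization that the trivial strip contributes with sign $+1$. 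This forces $\Phi_{\R\times\Lambda}(a)=a$ on generators, and \eqref{eq:funct2} follows by \eqref{eq:ex1}--\eqref{eq:ex2}.

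For \eqref{eq:funct1} the approach is an SFT neck-stretching argument. I would consider a $1$-parameter family of cobordisms $L_1\#_R L_2$ obtained by inserting a cylindrical region of length $R$ in the concatenation, with $R=0$ recovering $L_1\# L_2$ and $R\to\infty$ separating the two pieces along $\R\times\Lambda_1$. For each $R$ the rigid disks count $\Phi_{L_1\#L_2}$, and they fit into a $1$-dimensional parametrized moduli space whose boundary at $R\to\infty$ is, by SFT compactness, a union of broken configurations $(u_1;u_2^{(1)},\dots,u_2^{(k)})$ with $u_1\in\M_{L_1}(a,c_1\dotsm c_k)$ rigid and $u_2^{(i)}\in\M_{L_2}(c_i,\mathbf{b}_i)$ rigid, satisfying $\mathbf{b}=\mathbf{b}_1\dotsm\mathbf{b}_k$. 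Any intermediate level in $\R\times\Lambda_1$ that appears must, by the rigidity argument above applied in the cylindrical setting, consist only of trivial strips. Summing over such broken configurations and using that $\Phi_{L_2}$ is a ring homomorphism produces exactly $\Phi_{L_2}\circ\Phi_{L_1}(a)$.

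The main obstacle will be the sign bookkeeping. With capping operators at every Reeb chord $c$ of $\Lambda_1$ chosen to be the \emph{same} operator when orienting $\M_{L_1}(\cdot,\dots c\dots)$, $\M_{L_2}(c,\cdot)$, and $\M_{s,\Lambda_1}(c,\cdot)$, I would apply the linear gluing formula from Section \ref{sec:oc} iteratively at the $k$ intermediate punctures to show that the boundary orientation on $(u_1;u_2^{(1)},\dots,u_2^{(k)})$ equals $\sgn(u_1)\prod_i\sgn(u_2^{(i)})$, with the conventions of Remark \ref{rmk:dumsign} absorbing any uniform sign. Combined with the SFT gluing theorem identifying $\M_{L_1\#L_2}(a,\mathbf{b})$ with $\M_{L_1\#_R L_2}(a,\mathbf{b})$ for large $R$ as oriented manifolds, this yields the equality of algebraic counts on generators, and extension by \eqref{eq:ex2} gives \eqref{eq:funct1}. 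The delicate point is that the same intermediate capping operator must appear as an input at a negative puncture for $L_1$ and at a positive puncture for $L_2$; verifying that the sign contributions from these dual roles cancel is the technical core, exactly parallel to the mechanism ensuring $\Phi_L$ is a chain map in Theorem \ref{thm:kalman}, but now carried out simultaneously at $k$ punctures.
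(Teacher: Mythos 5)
Your overall architecture matches the paper's: both halves reduce to comparing the orientations induced on one totally glued, fully capped $\dbar$-problem by two different gluing orders, and for \eqref{eq:funct1} your description is essentially the paper's argument. The paper also takes the geometric correspondence between rigid disks of $L_1\#L_2$ and broken configurations $(u_0;v_1,\dotsc,v_m)$ as known (from the $\Z_2$ statement in \cite{tobhonda}, rather than re-running a neck-stretching family as you do, but this is immaterial), and then reduces everything to showing that the gluing sequence for $\dbar_{\hat u_0}\#_{i=1}^m\dbar_{\hat v_i}$ induces the same orientation as the sequence obtained by gluing the glued capping disks at $b_1,\dotsc,b_m$ (with respect to the intermediate system $\Sy_1$) to the fully capped concatenated disk. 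You correctly identify the technical core — the cancellation between the capping operator of $b_i$ in its negative role for $L_1$ and its positive role for $L_2$, mediated by the glued capping disk and the sign $(-1)^{|b_i|+n+d_A+1}$ of Definition \ref{def:capor} — so this half is a faithful, if compressed, version of the paper's proof (which itself defers the computation to ``arguments as in Theorem \ref{thm:kalman}''). One caveat: your appeal to Remark \ref{rmk:dumsign} ``absorbing any uniform sign'' is not available here; a residual chord-dependent sign would falsify \eqref{eq:funct1} as stated, so the comparison must come out to exactly $+1$.

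The genuine gap is in \eqref{eq:funct2}. Identifying the rigid disks of $\R\times\Lambda$ as the trivial strips and taking the same capping system at both ends is correct and is what the paper does, but the sentence ``verify by the linear-gluing formula \dots that the trivial strip contributes with sign $+1$'' is precisely the statement to be proved, and you give no computation. This is the entire content of this half of the theorem, and it is not a routine check: the sign of the trivial strip is a priori an unknown quantity $(-1)^\sigma$ entangled with the convention $(-1)^{|a|+n+d_A+1}$ of Definition \ref{def:capor}, the clockwise gluing of caps, and the extra $\R_t$ kernel of $\dbar_{\R\times a}$. The paper does not compute it directly; instead it runs a doubling argument: concatenate two copies of $\R\times\Lambda$, form the totally capped problem $\dbar_T$ in two ways (sequences \eqref{eq:sista1} and \eqref{eq:sista3}), use associativity of gluing orientations together with the kernel sequence \eqref{eq:sista0} for $\kd_{u_1}\oplus\kd_{u_2}\to\kd_u$, and compare the two resulting signs $(-1)^{\sigma+\sigma_1}$ and $(-1)^{\sigma_3}$ to force $\sigma\equiv 0$. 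In effect the paper extracts the sign from a consistency constraint rather than from a direct evaluation. To complete your proof you would either need to carry out the direct linear-gluing computation for $\dbar_{a,+}\#\dbar_{\R\times a}\#\dbar_{a,-}$ against the glued capping disk $\dbar_{a,+}\#\dbar_{a,-}$ and show the discrepancy in orientations is exactly $+\partial_t$ on $\kd_{\R\times a}$, or import the paper's doubling trick.
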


We will prove that the orientation scheme from Theorem
\ref{thm:georgios} can be used to derive these results.
As indicated in the
Introduction, this orientation scheme is defined using capping
operators. Briefly, this works as follows.

Let $u \in \M_{l,\Lambda}(a, {\bf b})$. We have a linearized $\dbar$-operator $\dbar_u$ associated to $u$, defined on the punctured unit disk in $\C$ and with a trivialized Lagrangian boundary condition induced by the spin structure of $\Lambda$. This boundary condition is ``closed up'' by
gluing capping disks to the punctures of $u$. That is, for each Reeb
chord $c$ of $\Lambda$ we define two different capping operators
$\dbar_{c,+}$ and $\dbar_{c,-}$. These are $\dbar$-operators defined on
the unit disk in $\C$ with one puncture. We glue $\dbar_{a,+}$ to $\dbar_u$ at the positive
puncture of $u$, and $\dbar_{b_{i},-}$ to $\dbar_u$ at the negative puncture
corresponding to the chord $b_i$, $i=1,\dotsc , m$. We require the
trivialized boundary conditions for the capping operators to be
defined in such a way so that these gluings induce a trivialized Lagrangian  boundary
condition for the non-punctured unit disk in $\C$.  Then we use the
fact that there is a  canonical orientation of the determinant line
bundle for the $\dbar$-operator over the space of trivialized
Lagrangian boundary conditions for the unit disk in $\C$. This
canonical orientation is given via evaluation at the boundary, see
[\cite{fooo}, Section 8], and the canonical orientation for the $\dbar$-operator
associated to the capped boundary condition induces
an orientation of the determinant line $\det \dbar_u$, which in turn induces an orientation of $T_u \M_{l,\Lambda}(a, {\bf b})$. This is explained in more detail in
Section \ref{sec:holo} and Section \ref{sec:oc}.

Notice that the signs occurring in the differential of the DGA of $\Lambda$ depend on the
choice of capping operators. We  will prove that for certain systems
of capping operators, the associated DGA:s are isomorphic.

\begin{thm}
  \label{thm:differentcappings}
  Let $\Lambda$ be a spin Legendrian submanifold of $J^1(M)$ with a fixed spin structure. Let $\mathcal{S}$ denote a system of capping operators for $\Lambda$
  satisfying \ref{c1} -- \ref{c3} in Section \ref{sec:orientcap}. Let $\partial_{l,\mathcal{S}}$ denote the induced differential as defined in \eqref{eq:differential}, where the orientation of the moduli space is induced by the system $\mathcal{S}$. Then $(\A(\Lambda), \partial_{l,\mathcal{S}};\Z)$ is a DGA whose homology is invariant under Legendrian isotopies.

  Moreover, if $\mathcal{S'}$ is another system of capping operators for $\Lambda$ satisfying  \ref{c1} -- \ref{c3}, then there is a DGA-isomorphism
  \begin{equation}
    \label{eq:isocap}
    \Phi_{\mathcal{S},\mathcal{S}'}:(\A(\Lambda), \partial_{l,\mathcal{S}};\Z) \to (\A(\Lambda), \partial_{l,\mathcal{S}'};\Z).
  \end{equation}
\end{thm}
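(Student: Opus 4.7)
The plan is to reduce Theorem~\ref{thm:differentcappings} to the preceding Theorems~\ref{thm:georgios}, \ref{thm:kalman} and \ref{thm:functorial}, using the trivial cobordism $L_{\id}:=\R\times\Lambda$ as the vehicle that compares different capping systems.

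For the first assertion that $(\A(\Lambda),\partial_{l,\mathcal{S}};\Z)$ is a DGA I would repeat the standard boundary-pairing argument: the coefficient of $\mathbf{b}$ in $\partial_{l,\mathcal{S}}^2(a)$ is the signed count of broken two-level configurations appearing as boundary points of the compactified one-dimensional moduli spaces, and coherence of the capping system, encoded in \ref{c1}--\ref{c3}, makes the two signs from the two endpoints of each such boundary arc cancel. This is precisely the argument used in the proof of Theorem~\ref{thm:georgios}, and nothing in that argument depends on the specific capping system beyond \ref{c1}--\ref{c3}. Legendrian-isotopy invariance of the homology will follow by transporting the invariance statement of Theorem~\ref{thm:georgios}, known for a reference system $\mathcal{S}_0$ (the one implicit in \cite{orientbok}), to any $\mathcal{S}$ via the isomorphism $\Phi_{\mathcal{S}_0,\mathcal{S}}$ that I construct below.

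For the isomorphism $\Phi_{\mathcal{S},\mathcal{S}'}$, I orient the moduli spaces $\M_{L_{\id}}(a,\mathbf{b})$ by capping the positive puncture with the $\dbar_{a,+}$ operator from $\mathcal{S}$ and each negative puncture $b_i$ with the $\dbar_{b_i,-}$ operator from $\mathcal{S}'$; the two ends are independent, so nothing prevents this mixed choice. Theorem~\ref{thm:kalman} then gives a DGA-morphism
\begin{equation*}
\Phi_{\mathcal{S},\mathcal{S}'}:=\Phi_{L_{\id}} : (\A(\Lambda),\partial_{l,\mathcal{S}};\Z) \to (\A(\Lambda),\partial_{l,\mathcal{S}'};\Z).
\end{equation*}
To see this is an isomorphism, I apply Theorem~\ref{thm:functorial} to the concatenation $L_{\id}\#L_{\id}$, which is identified with $L_{\id}$ after gluing. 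Choosing cappings $\mathcal{S}$ on the outer positive end, $\mathcal{S}'$ on the glued middle ends, and $\mathcal{S}$ again on the outer negative end, equation~\eqref{eq:funct1} yields
\begin{equation*}
\Phi_{\mathcal{S}',\mathcal{S}} \circ \Phi_{\mathcal{S},\mathcal{S}'} \;=\; \Phi_{L_{\id}\#L_{\id}} \;=\; \Phi_{\mathcal{S},\mathcal{S}} \;=\; \id,
\end{equation*}
where the last equality is \eqref{eq:funct2}. Swapping the roles of $\mathcal{S}$ and $\mathcal{S}'$ produces the reverse composition, so $\Phi_{\mathcal{S},\mathcal{S}'}$ is a DGA-isomorphism with inverse $\Phi_{\mathcal{S}',\mathcal{S}}$.

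The main obstacle I expect is justifying \eqref{eq:funct2} for an arbitrary capping system $\mathcal{S}$ satisfying \ref{c1}--\ref{c3}, rather than only for one preferred system. This amounts to identifying the rigid disks on $L_{\id}$ modulo $\R$-translation with the constant strips over Reeb chords and then checking that each such strip is oriented positively by the capping-operator recipe when the same $\mathcal{S}$ is used at both ends: the glued operator built from $\dbar_{c,+}$, the strip operator over $c$, and $\dbar_{c,-}$ must represent the canonically positively-oriented $\dbar$-operator on the unpunctured disk with the canonical trivialized boundary condition. This positivity is precisely the normalization built into \ref{c1}--\ref{c3}, and once it is in hand, the remainder of the argument is a bookkeeping exercise leaning on the already-established functoriality.
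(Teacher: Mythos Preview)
Your proposal is correct and follows essentially the same strategy as the paper: use the trivial cobordism $\R\times\Lambda$ with the positive end capped by $\mathcal{S}$ and the negative end by $\mathcal{S}'$, and invoke the chain-map property from Theorem~\ref{thm:kalman}. The paper differs in one minor but useful respect: rather than appealing to Theorem~\ref{thm:functorial} to exhibit an inverse, it observes directly that the only rigid disks on $\R\times\Lambda$ are the trivial strips $u_a=\R\times a$, so that $\Phi_{\mathcal{S},\mathcal{S}'}(a)=\sigma(u_a)\,a$ is diagonal with entries $\pm 1$ and hence manifestly a DGA-isomorphism. This yields an explicit formula for $\Phi_{\mathcal{S},\mathcal{S}'}$ and avoids any forward reference to Theorem~\ref{thm:functorial}, whose proof in the paper actually comes after that of Theorem~\ref{thm:differentcappings}. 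Your route via functoriality is logically sound once the order of proofs is rearranged, and your identification of the key remaining point---that $\Phi_{\mathcal{S},\mathcal{S}}=\id$ requires checking that each trivial strip carries sign $+1$ under any admissible system---is exactly the content of the paper's proof of \eqref{eq:funct2}.
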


We refer to Section \ref{sec:cob-pf} for an explicit description of the map (\ref{eq:isocap}).

\begin{rmk}
  From the proofs of Theorem \ref{thm:georgios}, Theorem \ref{thm:kalman} and Theorem \ref{thm:functorial} it follows that any system of capping operators satisfying  \ref{c1} -- \ref{c3}  gives coherent orientation schemes so that the statements of the  theorems hold.
\end{rmk}

\begin{rmk}
 Note that the capping operators defined in [\cite{orientbok}, Section 3.3], [\cite{orientbok}, Section 4.5] and [\cite{orienttrees}, Section 3.4] all satisfy \ref{c1} -- \ref{c3}. Compare with Remark \ref{rmk:notv}.
\end{rmk}

\begin{rmk}\label{rmk:oras}
  All orientation schemes above depend on choices of orientations of $\R^n$ and of $\C$, which we from now on assume to be fixed.
\end{rmk}

\section{Punctured pseudo-holomorphic disks}\label{sec:holo}

In this section we give a definition of punctured pseudo-holomorphic disks. We also define the moduli spaces that will be relevant for us.

\subsection{Pseudo-holomorphic disks}
 An \emph{almost complex structure} $J$ on a symplectic manifold $(X, \omega)$ is  an endomorphism $J: TX \to TX$ satisfying $J^2 =-\id$. We say that $J$ is \emph{compatible with $\omega$} if $\omega(\cdot, J \cdot)$ defines a Riemannian metric on $X$. If $(X, \omega) = (\R \times J^1(M), d(e^t \alpha))$, then $J$ is \emph{cylindrical} if it is compatible with $\omega$, is invariant under $t$-translation, and satisfies $J(\xi)= \xi$, $J(\partial_t) = R_\alpha$. Here $R_\alpha$ denotes the Reeb vector field of $\alpha$.  

Let $D$ be the compact unit disk in $\C$ and let $D_{m+1}$ denote the disk with $m+1$ marked points $p_0,\dotsc,p_m \in \partial D$, cyclically ordered along the boundary in the counter-clockwise direction. Let $\dot D_{m+1}$ denote the corresponding punctured disk with the marked points removed. 
We will assume that  $p_0=1 \in \C$, and call it the \emph{positive puncture}. We say that  $p_1,\dotsc,p_m$ are the \emph{negative punctures}.

A map $u:D_{m+1} \to X$ (or $u:\dot D_{m+1} \to X$ ) is \emph{J-holomorphic}  if it satisfies
\begin{equation*}
 \dbar_J(u) := du + J \circ du \circ i =0. 
\end{equation*}
If we want to neglect the choice of $J$ we say that $u$ is \emph{pseudo-holomorphic}.

\subsection{Gradings}
Each Reeb chord $a$ of $\Lambda$ comes equipped with a grading $|a|$, given by 
\begin{equation*}
 |a| = CZ(a) - 1
\end{equation*}
where $CZ(a)$ is
the \emph{Conley-Zehnder index} of $a$. Since we will not perform any explicit calculations of the gradings in this paper we refer to [\cite{pr}, Section 2.2] for a proper definition. 

\subsection{Moduli spaces}
In this section we give definitions of the relevant moduli spaces of pseudo-holomorphic disks. 

\subsubsection{Moduli spaces in the Lagrangian projection}
Fix an almost complex structure $J$ on $T^*M$, compatible with $\omega$. We let $\M_{l,\Lambda}(a, \bf b)$, ${\bf b}=b_1\dotsm b_m$, denote the moduli space of pseudo-holomorphic maps $u:(D_{m+1}, \partial D_{m+1}) \to (T^*M, \Pi_\C(\Lambda))$ satisfying the following:
\begin{enumerate}
 \item $u|_{\partial \dot D_{m+1}}$ has a continuous lift $\tilde u$ to $\Lambda$; 
 \item $u(p_0)= \Pi_\C(a)$, where $a$ is a Reeb chord of $\Lambda$, and the $z$-coordinate of $\tilde u$ makes a positive jump when passing through $p_0$ in the counterclockwise direction;
 \item $u(p_i)= \Pi_\C(b_i)$,  $i=1,\dotsc,m$, where $b_i$ is a Reeb chord of $\Lambda$,  and the $z$-coordinate of $\tilde u$ makes a negative jump when passing through $p_i$ in the counterclockwise direction.
\end{enumerate}

Moreover, we consider two maps $u_1$, $u_2$ satisfying the above to be equal if they differ by a biholomorphism of $D_{m+1}$. 

In \cite{legsub} it is proven that for generic $J$ the moduli spaces are transversely cut out manifolds
of dimension
\begin{equation}\label{eq:dim1}
\dim\M_{l,\Lambda}(a,{\bf b}) = |a| - \sum_{i=1}^m |b_i| -1.
 \end{equation}

\subsubsection{Moduli spaces in the symplectization}
Fix a cylindrical almost complex structure $J$ on $\R \times J^1(M)$. We let $\hat\M_{s,\Lambda}(a, \bf b)$ denote the moduli space of pseudo-holomorphic maps $u:(\dot D_{m+1}, \partial \dot D_{m+1}) \to (\R \times J^1(M), \R \times \Lambda)$ satisfying the following:
\begin{enumerate}
 \labitem{{(s1)}}{s1} in a neighborhood of the positive puncture $p_0$ the map $u$ is asymptotic to the Reeb chord strip $[0,\infty) \times c$; 
 \labitem{{(s2)}}{s2} in a neighborhood of the negative puncture $p_i$ the map $u$ is asymptotic to the Reeb chord strip $(-\infty,0] \times b_i$ $i=1,\dotsc,m$. 
\end{enumerate}

Again, we consider two maps $u_1$, $u_2$ satisfying the above to be equal if they differ by a biholomorphism of $D_{m+1}$. 

We let 
$\M_{s,\Lambda}(a, {\bf b}) = \hat \M_{s,\Lambda}(a, \bf b)/\R$
where the $\R$-action is given by translation in the $t$-direction.

For generic $J$ the moduli spaces are transversely cut out 
manifolds of dimension
\begin{equation*}
\dim\M_{s,\Lambda}(a,{\bf b}) = |a| - \sum_{i=1}^m |b_i|-1.
\end{equation*}
See [\cite{Georgios}, Section 4.2.4].

\subsubsection{Moduli spaces associated to an exact Lagrangian cobordism}
 Fix a compatible almost complex structure $J$ on $\R \times J^1(M)$, and assume that it is cylindrical for $|t|>N$ for some $N$. We let $\M_{L}(a, \bf b)$ denote the moduli space of pseudo-holomorphic maps $u:(\dot D_{m+1}, \partial \dot D_{m+1}) \to (\R \times J^1(M), L)$ satisfying \ref{s1} and \ref{s2},
and again we consider two maps $u_1$, $u_2$ to be equal if they differ by a biholomorphism of $D_{m+1}$. 

For generic $J$ the moduli spaces are transversely cut out manifolds of dimension 
\begin{equation}\label{eq:dim2}
\dim\M_{L}(a,{\bf b}) = |a| - \sum_{i=1}^m |b_i|.
\end{equation}
See [\cite{tobhonda}, Lemma 3.7]. 

From now on we assume that the almost complex structures are chosen so that the relevant moduli spaces are transversely cut out manifolds of the expected dimension.
We call a disk $u \in \M_{i,\Lambda}(a,{\bf b})$, $i=l,s$, a \emph{pseudo-holomorphic disk of $\Lambda$ with positive puncture $a$ and negative punctures $b_1,\dotsc,b_m$}. If moreover  $\dim \M_{i,\Lambda}(a,{\bf b}) =0$  we say that $u$ is \emph{rigid}. We use similar language for disks $u \in  \M_{L}(a,{\bf b})$.

\subsection{The linearized \texorpdfstring{$\dbar$}{dbar}-operator}
The algebraic count of elements in \eqref{eq:differential} and \eqref{eq:phidisk} are defined by associating a sign to each rigid pseudo-holomorphic disk. This assignment of signs can be understood as an orientation of the moduli spaces, and since these spaces are assumed to be zero-dimensional they are always orientable.  However, since we require that  $\partial^2= 0$ and $\partial \circ \Phi_L = \Phi_L \circ \partial$, we need to choose the orientations in a coherent way. This is done by considering linearized $\dbar$-operators associated to the pseudo-holomorphic disks.

Let the Sobolev space $\Hi_k({D}_{m+1}, \C_n)$ be the closure of $\Co_0^\infty (\dot{D}_{m+1}, \C^n )$ equipped with the 
standard Sobolev $\|\cdot \|_{k,2}$-norm. That is, $\Hi_k({D}_{m+1}, \C_n)$ consists of all elements in $L^2({D}_{m+1}, \C_n)$ whose weak derivatives exist and belong to $L^2$, up to order $k$. 

Choose local coordinates on $\dot D_{m+1}$ in a neighborhood of the puncture $p_j$, given by a half-infinite strip
$E_{p_j} = {(\tau, t) \in [0,\infty) \times [0, 1]},
(\tau, t) = \tau + it$. For each puncture $q_i \in D_{m+1}$ we define a \emph{weight vector} 
\begin{equation*}
 \nu_i = (\nu_i^1,\dotsc,\nu_i^n) \in (-\pi/2, \pi/2)^n
\end{equation*}
and let $\nu= (\nu_0,\dotsc,\nu_m)$. Let $w_{\nu}: D_{m+1} \to GL(n)$ be a smooth function satisfying
  \begin{align*}
    w_{\nu}(\tau,t) =
    \diag(e^{\nu_i^1 |\tau|},\dotsc,e^{\nu_i^n |\tau|})
  \end{align*}
  in $E_{p_i}(M)$, and assume that $w_{\nu}$ is close to the identity
  matrix in compact regions of the disk. 
  
  Let the \emph{weighted Sobolev space $\Hi_{k,\nu}(D_{m+1},\C^n)$} be defined by 
\begin{equation*}
 \Hi_{k,\nu}(D_{m+1},\C^n) = \{f \in \Hi_k^{loc}(D_{m+1},\C^n); w_{\nu}f \in \Hi_k(D_{m+1},\C^n)\}
\end{equation*}
with norm 
\begin{equation*}
\|f\|_{k,\nu} = \|w_{\nu}f\|_{k,2}. 
\end{equation*}

If $u: (D_{m+1}, \partial D_{m+1}) \to (T^*M, \Pi_\C(\Lambda))$ is a pseudo-holomorphic disk, then $u^* T\Pi_\C(\Lambda)$ induces a Lagrangian boundary condition on $D_{m+1}$. Pick a complex trivialization of $u^*TT^*M$. Using that $\Lambda$ is spin, we get a well-defined trivialization of the Lagrangian boundary condition, following [\cite{orientbok}, Section 3.4.2] and [\cite{pr}, Section 4.4]. This gives a collection of maps 
 $A = (A_0,\dotsc,A_{m+1}): \partial D_{m+1} \to U(n),
$ 
where 
\begin{equation*}
 A_i : [p_i, p_{i+1}] \to U(n), \qquad i=0,\dotsc,m+1, \quad m+2 =0.
\end{equation*}

Let 
\begin{equation*}
\Hi_{2,\nu}[A](D_{m+1}, u^*TT^*M) 
 \end{equation*}
 denote the closed subspace of  $\Hi_{2,\nu}(D_{m+1}, u^*TT^*M)$, consisting of elements $s$ that satisfy the linearized Lagrangian boundary condition $A$ along $\partial D_{m+1}$, and which satisfy 
 $\dbar_u s |_{\partial D_{m+1}} = 0$. 
Similarly, let 
\begin{equation*}
\Hi_{1,\nu}[0](D_{m+1}, T^{*0,1}D_{m+1} \otimes  u^*TT^*M)                                                                         \end{equation*}
 be the closed subspace of  $\Hi_{1,\nu}(D_{m+1}, T^{*0,1}D_{m+1} \otimes  u^*TT^*M)$ consisting of elements $s$ satisfying $ s |_{\partial D_{m+1}} = 0$.

From this we get an associated linearized $\dbar$-operator
\begin{equation*}
 \dbar_A = \dbar_{l,A}: \Hi_{2,\nu}[A](D_{m+1}, u^*TT^*M) \to \Hi_{1,\nu}[0](D_{m+1}, T^{*0,1}D_{m+1} \otimes  u^*TT^*M).
\end{equation*}

The boundary condition $A$ lifts to a boundary condition $\id \oplus A$ under $\pi_P$, and gives rise to a similar operator
\begin{multline*}
\dbar_{s,A}:\Hi_{2,\nu}[\id \oplus A](D_{m+1},\tilde u^*T(\R \times J^1(M))) \to\\
\Hi_{1,\nu}[0](D_{m+1},T^{*0,1}D_{m+1} \otimes \tilde u^*T(\R \times J^1(M))),
\end{multline*}
where $\tilde u$ is the lift of $u$ under $\pi_P$. We extend the weight function to $GL(1+n)$ by redefining the weight vector at puncture $q_i$ to be given by 
\begin{equation*}
 \nu_i = (-\epsilon,\nu_i^1,\dotsc,\nu_i^n), 
\end{equation*}
for some $\epsilon > 0$ small, $i= 0, \dotsc , m$. 

\begin{thm}[\cite{pr}, Lemma 4.3 and Lemma 4.5;\cite{Georgios}, Lemma 8.2]\label{thm:fredholm} 
 There is a choice of weight vectors so that the operators $\dbar_{l,A}$, $\dbar_{s,A}$ are Fredholm, and so that for a generic choice of almost complex structures these operators are surjective after having stabilized their domains with the space of conformal variations from Section \ref{sec:confvar}.
\end{thm}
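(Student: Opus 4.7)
The plan is to reduce to standard weighted Fredholm theory for $\dbar$-operators on half-strips with Lagrangian boundary conditions, and then use a Sard--Smale transversality argument for the surjectivity statement. First I would analyze the behavior at each puncture. In the strip coordinates $E_{p_i}$, after trivializing via $A$, the operator $\dbar_{l,A}$ is asymptotic to a constant-coefficient operator of the form $\partial_\tau + J_0 \partial_t + S_i(t)$, where $S_i(t)$ is a path of symmetric endomorphisms determined by the linearization of the Reeb flow at the chord $a$ or $b_i$. The Reeb chord non-degeneracy guarantees that the associated self-adjoint asymptotic operator $-J_0 \partial_t - S_i(t)$ on $L^2([0,1])$ with the boundary conditions prescribed by $A$ has discrete spectrum disjoint from zero. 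By the Lockhart--McOwen style result used in [\cite{pr}, Lemma 4.3], $\dbar_{l,A}$ is then Fredholm on the weighted Sobolev pair $\Hi_{2,\nu} \to \Hi_{1,\nu}$ as soon as each component $\nu_i^k$ avoids the spectrum of the $k$-th asymptotic mode, and the interval $(-\pi/2, \pi/2)$ leaves enough room to sit strictly inside the spectral gap containing zero.

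For the symplectization operator $\dbar_{s,A}$, the extra complex direction coming from $\R\partial_t \oplus \R R_\alpha$ contributes an asymptotic mode whose spectrum contains $0$, because the Reeb direction is tangent to the cylindrical Lagrangian $\R \times \Lambda$. The small shift $-\epsilon$ in this coordinate moves $0$ off the spectrum, restoring the Fredholm property; this is the content of [\cite{Georgios}, Lemma 8.2]. No new estimates are needed in the cobordism case $\M_L$ because cylindricality of $J$ is assumed for $|t|>N$ and so the same asymptotic analysis applies near the punctures.

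For surjectivity I would work with the universal moduli space of pairs $(u, J)$ satisfying $\dbar_J u = 0$ with the prescribed asymptotics, whose linearization at $(u, J)$ sends $(s, Y)$ to $\dbar_{J,u} s + \tfrac{1}{2} Y(u) \circ du \circ i$. Stabilizing the domain with the space of conformal variations from Section~\ref{sec:confvar} accounts for the finite-dimensional family of deformations of the punctured disk after fixing $p_0 = 1$, which is exactly what is needed for the stabilized linearization to surject onto $\Hi_{1,\nu}[0]$. A nontrivial cokernel element $\eta$ would have to annihilate $Y(u)\circ du \circ i$ for every admissible variation $Y$; testing $Y$ supported near a somewhere-injective interior point of $u$ forces $\eta$ to vanish on an open set, and unique continuation for the formal adjoint then gives $\eta \equiv 0$. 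Sard--Smale applied to the projection from the universal moduli space to the space of almost complex structures produces the desired residual set.

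The main obstacle is establishing somewhere-injectivity of the disks in the symplectization and cobordism settings, where $\R$-invariance, multiple coverings, and asymptotic matching at Reeb chords complicate the naive argument. In the Lagrangian projection case this is handled in [\cite{pr}] via semi-admissibility of $\Pi_\C(\Lambda)$, and in the symplectization case a comparable statement is proved in [\cite{Georgios}] using that a non-constant pseudo-holomorphic curve with boundary on $\R \times \Lambda$ must have injective points off the Reeb chord strips. Granted these inputs, the remainder of the proof is standard functional-analytic bookkeeping, and the same scheme handles $\dbar_{l,A}$, $\dbar_{s,A}$, and the analogous cobordism operator uniformly.
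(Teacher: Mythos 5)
The paper does not actually prove Theorem \ref{thm:fredholm}: it is imported wholesale from [\cite{pr}, Lemmas 4.3 and 4.5] and [\cite{Georgios}, Lemma 8.2], so there is no internal argument to compare against. Your sketch is the standard one underlying those references and is essentially correct: Fredholmness follows from the asymptotics to translation-invariant operators on half-strips together with weights avoiding the asymptotic spectrum, the extra $\R\partial_t\oplus\R R_\alpha$ direction contributes a zero eigenvalue (the boundary lines $\R\partial_t$ on the two sides of the strip coincide rather than being transverse) which the $-\epsilon$ weight shifts away, and surjectivity comes from a universal moduli space plus Sard--Smale after stabilizing by the conformal variations.

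One caveat on the transversality step. For boundary-punctured disks the cited references do not argue through an interior somewhere-injective point: multiple covers cannot be excluded that way, and a perturbation of $J$ supported at an interior point does not interact with the boundary value problem in the way needed here. What [\cite{pr}] actually uses is that the exponential asymptotics at the positive puncture force an embedded, injective boundary arc near that puncture, and the perturbation is performed in a neighborhood of the Reeb chord endpoint, with unique continuation from the boundary completing the argument; [\cite{Georgios}] runs a compatible argument in the symplectization. You correctly flag somewhere-injectivity as the main obstacle and defer it, which is acceptable, but the input you would be importing is boundary injectivity near the positive puncture rather than interior somewhere-injectivity, so the sentence about testing $Y$ near an interior point should be replaced accordingly.
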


Similar constructions are done for the linearized $\dbar$-operator at a holomorphic disk $u \in \M_L(a, \bf b)$, and we get the same results about Fredholmness and surjectivity. 

\begin{rmk}
Sometimes we write $\dbar_u$ instead of $\dbar_{l,A}$, $\dbar_{s,A}$, to simplify notation. 
\end{rmk}

 All this is related to orientations of moduli spaces in the following way. 
If $u$ is a pseudo-holomorphic disk of $\Lambda$ (or of $L$), it follows from Theorem \ref{thm:fredholm} that $\dbar_u$ is Fredholm. That is, it has finite-dimensional kernel and cokernel. This means that we can consider its determinant line $\det \dbar_u$, 
\begin{equation*}
 \det \dbar_u = \bigwedge^{\max} \Ker \dbar_u \otimes \bigwedge^{\max} (\coker \dbar_u)^*,
\end{equation*}
where $\bigwedge^{\max}V$ is the top exterior power of the vector space $V$.
In particular this means that we can give an orientation to $\det \dbar_u$. This orientation will in turn be related to the orientation of $T_u \M_{i,\Lambda}$ (or  $T_u \M_L$), as we will explain in Section \ref{sec:confvar}.

\section{Orientation conventions}\label{sec:oc}
The signs in the algebraic count of elements in the DGA-morphisms, and also in the DGA-differentials, come from orientations of the moduli spaces of $J$-holomorphic disks. These orientations depend on several choices, which we fix in this section. 

We mainly use the approach of \cite{orientbok} where the moduli spaces $\M_{l,\Lambda}$ are oriented, but we will make some slight modifications of these conventions to make them fit into the symplectization setting.

We close this section by proving that  the chosen conventions imply the statements in Theorem \ref{thm:georgios}.

\subsection{Short exact sequences}
First of all, it is a standard fact that an exact sequence
\begin{equation}\label{eq:es}
 0 \xrightarrow{} V_1 \xrightarrow{\alpha} W_1\xrightarrow{\beta} W_2
\xrightarrow{\gamma} V_2 \xrightarrow{} 0
\end{equation}
of finite-dimensional vector spaces induces an isomorphism  
\begin{equation}\label{eq:phi}
\phi:\bigwedge^{\max}V_1 \otimes  \bigwedge^{\max}V^*_2 \xrightarrow{\approx} 
\bigwedge^{\max}W_1 \otimes  \bigwedge^{\max}W^*_2.
\end{equation} 
 See e.g. [\cite{Floer-Hofer}, Appendix]. 
This isomorphism is not canonical, but depends on choices. For a deeper discussion on this, see \cite{z}.
We will use the following convention, described in terms of oriented bases:

First we identify $\bigwedge^{\max}V^*$ with $\bigwedge^{\max}V$ via 
 $v_1\wedge \dotsb \wedge v_k \mapsto v_1^* \wedge \dotsb \wedge v_k^*$,
where $(v_1,\dotsc,v_k)$ is any basis for $V$, and $v_i^*$ is the vector dual to $v_i$. Now pick a basis $(v_1,\dotsc,v_k)$ for $V_1$, and vectors $(u_1,\dotsc,u_l)\in W_2$ so that $(\gamma(u_1),\dotsc,\gamma(u_l))$ gives a basis for $V_2$. Then pick vectors $(w_1,..,w_m) \in W_1$ so that $(\alpha(v_1),\dotsc,\alpha(v_k), w_1,\dotsc,w_m)$ gives a basis for $W_1$. From the exactness of the sequence \eqref{eq:es} it then follows that $(\beta(w_1),\dotsc,\beta(w_m),u_1,\dotsc,u_l)$ gives a basis for $W_2$. We fix the isomorphism \eqref{eq:phi} to be given by
  \begin{multline}\label{eq:o2}
   v_1\wedge\dotsb \wedge v_k \otimes \gamma(u_1)\wedge\dotsb\wedge \gamma(u_l) \mapsto\\ 
    v_1\wedge\dotsb \wedge v_k \wedge w_1 \wedge \dotsb \wedge w_m \otimes  u_1\wedge\dotsb\wedge u_l \wedge \beta(w_1)\wedge \dotsb \wedge \beta(w_m),
  \end{multline}
and extend by linearity. It is straightforward to check that this definition does not depend on the choice of oriented bases.
 \begin{rmk}\label{rmk:kan}
This convention is slightly different than the one in [\cite{orientbok}, Section 3.2.1].
  As a consequence of this choice, we get rid of the sign $(-1)^{\frac{1}{2}(n-1)(n-2)}$ in the statement of [\cite{orientbok}, Lemma 3.11].
 \end{rmk}

\subsection{Exact gluing sequences, and order of gluing}\label{sec:gluing}
We will repeatedly make use of exact gluing sequences of pseudo-holomorphic disks. For a detailed description we refer to [\cite{orientbok}, Section 3.2]. Here we give an outline of the construction. 

Let  $D_{m_1+1}$ be a disk with punctures $(q_0,q_1,\dotsc,q_{m_1})$ and with an associated Lagrangian boundary condition $A: \partial D_{m_1+1} \to U(n)$. Similarly, let  $D_{m_2+1}$ be a disk with punctures $(p_0,p_1,\dotsc,p_{m_2})$ and with an associated Lagrangian boundary condition $B: \partial D_{m_2+1} \to U(n)$. If $A$ and $B$ are asymptotically equal to the same constant map at the punctures $q_0$ and $p_k$, say, then we can glue $D_{m_1+1}$ to $D_{m_2+1}$ at $q_0 =p_k$, and get a trivialized Lagrangian boundary condition $A\#B$ on the glued disk $D_{m_1+m_2} = D_{m_1+1}\#D_{m_2+1}$.

This gluing induces an exact sequence for the kernels and cokernels of the associated operators $\dbar_A$, $\dbar_B$ and $\dbar_{A\#B}$, given by 
\begin{equation}\label{eq:gluconv}
 0 \to \kd_{A\#B}
 \xrightarrow{\alpha}
 \begin{bmatrix}
  \kd_B \\
  \kd_A
 \end{bmatrix}
 \xrightarrow{\beta}
\begin{bmatrix}
 \cd_B \\
 \cd_A
\end{bmatrix}
 \xrightarrow{\gamma}
\cd_{A\#B} \to 0.
\end{equation}

Here we use the notation 
\begin{equation*}
 \begin{bmatrix}
V\\
W
\end{bmatrix} = V \oplus W.
\end{equation*}

\begin{rmk}\label{rmk:glumap}
 The maps $\alpha$, $\beta$ and $\gamma$ are given as follows. First embed the kernels and cokernels of the $\dbar_A$ and $\dbar_B$-operators in the Sobolev spaces associated to $\dbar_{A \#B}$, by cutting off the elements with cut-off functions $\phi_A^\rho, \phi_B^\rho$. 
Then $\alpha$ is $L^2$-projection
onto the space spanned by the cut-off kernel elements, $\beta$ is
$\dbar$ composed with $L^2$-projection and $\gamma$ is projection
to the quotient. See [\cite{orientbok}, Section 3.2.2] for a more detailed description.
\end{rmk}

Using the isomorphism \eqref{eq:phi} we see that orientations of $\det \dbar_A$ and $\det \dbar_B$ induce an orientation of $\det \dbar_{A\#B}$. Note that this induced orientation depends on the pairwise order of the vector spaces in the second and third column of the gluing sequence \eqref{eq:gluconv}, and that we have chosen the opposite order compared to [\cite{orientbok}, Section 3.2.2]. The reason for this change is that the order in \eqref{eq:gluconv} seems more feasible when working with an extra $\R$-direction, which shows up when we consider pseudo-holomorphic disks in the symplectization instead of in the Lagrangian projection. Compare with the discussion in Subsection \ref{sec:clock}.

\subsection{Orientations of the space of conformal variations}\label{sec:confvar}
Let $u:D_{m+1} \to X$, $X = T^*M$ or $X= \R \times J^1(M)$, be  a rigid $J$-holomorphic disk of $\Lambda$ or of $L$. If $m>1$, then the linearized $\dbar$-operator at $u$, restricted to the Sobolev space of candidate maps, will have cokernel isomorphic to the tangent space of the space of conformal structures of $D_{m+1}$. We call this tangent space the \emph{space of conformal variations}, and the orientation (i.e.\ the sign) of $u$ will depend on which orientation we choose on this space. See [\cite{orientbok}, Lemma 3.17]. We fix this orientation as follows.  

Let $\Co_m$ denote the space of conformal structures on  $D_{m+1}$. If we fix the positions of three of the punctures of $D_{m+1}$, then the position of the other punctures parameterize $\Co_{m}$.  
To describe the orientation of the tangent space $T_{\kappa} \Co_m$ at a conformal structure $\kappa$, let  $\partial_{p_j}$ denote the vector tangent to $\partial D_{m+1}$ at $p_j$, pointing in the counterclockwise direction. Then if we choose $m-2$ of the vectors $\partial_{p_0},\dotsc,\partial_{p_m}$ we get a basis for $T_{\kappa} \Co_m$. We define the positive orientation of $T_{\kappa} \Co_m$ to be given by
\begin{equation}\label{eq:confor}
 (\partial_{p_m}, \dotsc, \partial_{p_3}).
\end{equation}
This somewhat unnatural choice of orientation is a consequence of the convention \eqref{eq:gluconv}. Compare with the discussion in Subsection \ref{sec:clock}.
\begin{rmk}\label{rmk:ob}
 This gives the same orientation as the oriented basis 
 \begin{equation*}
   (\partial_{p_m},\dotsc,\partial_{p_{k+1}},-\partial_{p_{k-1}},\dotsc,-\partial_{p_{j+1}}, \partial_{p_{j-1}},\dotsc,\partial_{p_1}). 
 \end{equation*}
%

\end{rmk}

\begin{rmk}
 If $m \leq 1$ then we can add marked points to the boundary of $D_{m+1}$ to get the setting above. See [\cite{orientbok}, Section 4.2.3].
\end{rmk}

To see how the orientation of the space of conformal variations relates to the sign of a rigid pseudo-holomorphic disk, we consider the \emph{fully linearized} $\dbar$-operator $d\Gamma_u$ at a $J$-holomorphic disk $u$. Here
\begin{equation*}
 d \Gamma_u: \Hi_{2,\nu}[A] \oplus T\Co_{m} \to \Hi_{1,\nu}[0], \qquad d\Gamma_u(v,w) = \dbar_u(v) + \Psi(w), 
\end{equation*}
where $\Psi: T\Co_{m} \to \Hi_{1,\nu}[0]$ is a linear map which we will not specify in detail. The regularity assumptions on the almost complex structure $J$ implies that $d\Gamma_u$ is surjective and that the tangent space of the moduli space $\M$ to which $u$ belongs can be identified with the kernel of $d\Gamma_u$,
\begin{equation*}
 T_u\M \simeq \Ker d\Gamma_u.
\end{equation*}
We see that an orientation of $\Ker d\Gamma_u$ induces an orientation of $T_u\M$, and in particular, if $u$ is rigid so that $\M$ is zero-dimensional this will just be  a sign assigned to $\M$ at $u$. 

By the proof of [\cite{orientbok}, Lemma 3.17] we have that 
\begin{equation*}
 \bigwedge^{\max}\Ker d\Gamma_u \simeq \det \dbar_u \otimes \bigwedge^{\max}T\Co_{m}.
\end{equation*}
Thus an orientation of $\det \dbar_u$ and of $T\Co_{m}$ induces an orientation of $\Ker d\Gamma_u$. In particular, if $u$ is rigid then $\Ker d\Gamma_u = 0 = \kd_u$ (assuming $m>1$) and $\det \dbar_u \otimes \bigwedge^{\max}T\Co_{m}$ is given by the sign of the isomorphism
\begin{equation}\label{eq:psibar}
 \bar{\Psi}:T\Co_{m} \to \cd_u.
\end{equation}
Here $\bar \Psi$ is given by $\Psi$ composed with the projection to the cokernel of $\dbar_u$. In Section \ref{sec:orientdisk} we define an orientation of $\det \dbar_u$ .

Now recall the  gluing of $\dbar_A$ and $\dbar_B$ described in Section \ref{sec:gluing}. The direct sum of the conformal structures of the disks $D_{m_1+1}$ and $D_{m_2+1}$ (which were joined at $q_0 = p_k$, with $q_0$ denoting the positive puncture of $D_{m_1+1}$) can be seen as an element of the boundary of the space $\Co_{m}$, $m=m_1 + m_2-1$. In addition, the outward normal at this conformal structure can be given by $\partial_{q_1}=-\partial_{p_{k-1}}$, or alternatively $\partial_{p_{k+1}} = - \partial_{q_{m_1}}$. We orient the boundary by outward normal last. 

\begin{lma}\label{lma:confvar}
 We have 
 \begin{equation*}
T\Co_{m_2} \oplus T  \Co_{m_1} \oplus \R = (-1)^{(m_1-1)k+1}T\Co_m
 \end{equation*}
as oriented vector spaces, where $\R$ is given the orientation from the outward normal.
\end{lma}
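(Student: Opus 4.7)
The plan is to compute the change-of-basis sign directly by parameterizing a neighborhood of the boundary stratum in $\Co_m$ and writing both bases in a common M\"obius gauge. Near the boundary, a conformal structure on $D_{m+1}$ is specified by the data $(\kappa_1, \kappa_2, \rho)$ with $\kappa_i \in \Co_{m_i}$ and $\rho \geq 0$ the gluing parameter (the neck length, or equivalently the cluster size), with $\rho = 0$ corresponding to the boundary. The punctures of $D_{m+1}$ are identified by $r_i = p_i$ for $0 \le i \le k-1$, $r_i = q_{i-k+1}$ for $k \le i \le k+m_1-1$, and $r_i = p_{i-m_1+1}$ for $k+m_1 \le i \le m$, with $q_0$ and $p_k$ both collapsing to the cluster center at the boundary. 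Since $\rho > 0$ is the interior, the outward normal is $-\partial_\rho$, which points in the direction $\partial_{q_1} = -\partial_{p_{k-1}}$ modulo tangent directions to the boundary stratum.

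To carry out the comparison I first apply Remark \ref{rmk:ob} to regauge the standard $\Co_m$-basis $(\partial_{r_m}, \dots, \partial_{r_3})$ (which fixes $r_0, r_1, r_2$) to a gauge fixing $r_0$ together with two punctures compatible with the bubble and main-disk gauges, namely the images in $D_{m+1}$ of the fixed punctures of $\Co_{m_1}$ or $\Co_{m_2}$ besides the nodal one. The sign of this regauging is explicit from Remark \ref{rmk:ob}. In the new gauge, the basis vector $\partial_{q_i}$ of $T\Co_{m_1}$ corresponds, via the cluster rescaling, to a positive multiple of $\partial_{r_{k+i-1}}$, while $\partial_{p_i}$ of $T\Co_{m_2}$ corresponds to $\partial_{r_i}$ for $i < k$ and $\partial_{r_{i+m_1-1}}$ for $i > k$.

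Assembling the concatenated basis $(\partial_{p_{m_2}}, \dots, \partial_{p_3}, \partial_{q_{m_1}}, \dots, \partial_{q_3}, -\partial_\rho)$ and comparing to the regauged $\Co_m$-basis produces a signed block permutation whose sign decomposes into contributions from (a) the $\Co_m$-regauging above; (b) an analogous $\Co_{m_2}$-regauging in the edge cases $k \in \{1,2\}$ where the conventional fixed punctures of $\Co_{m_2}$ overlap with the cluster; (c) the interleaving permutation that places the bubble block into its position within the $r$-sequence; and (d) the sign from putting $-\partial_\rho$ at the end of the list. A direct enumeration shows that these contributions combine to $(-1)^{(m_1-1)k+1}$, with the factor $(m_1-1)k$ arising from the block permutation (moving the $m_1 - 1$ variables $\partial_{q_3}, \dots, \partial_{q_{m_1}}$ past the pre-bubble variables of $\Co_{m_2}$, accounting for the reversed conventional ordering) and the $+1$ from $-\partial_\rho$ versus $\partial_\rho$.

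The main obstacle is the combinatorial bookkeeping of signs, especially in the edge cases $k \in \{1,2,m_2\}$ and in the low-dimensional cases $m_i \le 1$, where auxiliary marked points must be added (as in [\cite{orientbok}, Section 4.2.3]) to reduce to the generic setting. Verifying that the exponent $(m_1-1)k+1$ emerges uniformly across all configurations is the technical heart of the proof.
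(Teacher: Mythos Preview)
Your approach --- a direct combinatorial comparison of bases after regauging via Remark \ref{rmk:ob} --- is correct and is exactly what the paper does by deferring to [\cite{orientbok}, Lemma 4.7] (adapted to the reversed orientation convention \eqref{eq:confor}). One caution on your heuristic sign accounting: the bubble block $\partial_{q_3}, \dots, \partial_{q_{m_1}}$ has $m_1 - 2$ elements, not $m_1 - 1$, so the clean decomposition ``$(m_1-1)k$ from the block permutation plus $+1$ from $-\partial_\rho$'' does not hold literally; when you actually carry out the bookkeeping, the contributions from the regaugings, the interleaving, and the placement of the normal vector combine in a slightly less transparent way to produce the exponent $(m_1-1)k+1$, so do the computation carefully rather than trusting the heuristic split.
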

\begin{proof}
 This is similar to the proof of [\cite{orientbok}, Lemma 4.7]. 

\end{proof}

\subsection{Canonical orientation of the closed disk, trivializations, and spin structures}\label{sec:canon}

The determinant line bundle of the $\dbar$-operator over the space of
trivialized Lagrangian boundary conditions on the non-punctured unit disk in
$\C$ is orientable. Moreover,  if we fix an orientation of $\R^n$ and
of $\C$, then
this induces an orientation, via evaluation at the boundary. See
[\cite{fooo}, Proposition 8.1.4]. We denote this induced orientation the \emph{canonical
  orientation} (recall that we assume that we have fixed orientations
of $\C$ and $\R^n$ already, see Remark \ref{rmk:oras}) .

The following proposition follows from [\cite{orientbok}, Section 3.4.2], [\cite{pr}, Section 4.4] and [\cite{fooo}, Section 8.1].
\begin{prp}
 If $\Lambda$ (or $L$) is spin, then a choice of spin structure induces a trivialized Lagrangian boundary condition of $u \in \M_{l,\Lambda}$ (or $u \in \M_L$), which is well-defined up to homotopy.
\end{prp}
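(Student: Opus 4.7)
My plan is to reduce the construction of a trivialized Lagrangian boundary condition to two essentially independent pieces: (i) a complex trivialization of the ambient pullback bundle over the disk, and (ii) a trivialization of the boundary Lagrangian subbundle, where the spin structure is used only in step (ii). I will treat the Lagrangian case $u\in\M_{l,\Lambda}$ in detail; the cobordism case $u\in\M_L$ is identical after replacing $\Pi_\C(\Lambda)\subset T^*M$ by $L\subset \R\times J^1(M)$, using the spin structure on $L$ instead of on $\Lambda$.

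First I would use that $D_{m+1}$ is contractible to fix a complex trivialization $\tau: u^*T(T^*M)\xrightarrow{\cong} D_{m+1}\times \C^n$. Any two such trivializations differ by a smooth map $D_{m+1}\to U(n)$, and since $D_{m+1}$ is contractible and $U(n)$ is connected, the space of such trivializations is connected; thus $\tau$ is well-defined up to homotopy. Under $\tau$ the boundary condition $u^*T\Pi_\C(\Lambda)|_{\partial D_{m+1}}$ becomes a map $[A]: \partial D_{m+1}\to \Lambda(n)$ into the Lagrangian Grassmannian, and the desired trivialized Lagrangian boundary condition is, by definition, a lift $A:\partial D_{m+1}\to U(n)$ along the fibration $O(n)\to U(n)\to \Lambda(n)$ whose $A_i(x)\cdot\R^n$ recovers $[A](x)$.

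Next I would use the spin hypothesis to produce the lift. Over each boundary arc $[p_i,p_{i+1}]$ the continuous lift $\tilde u$ of $u|_{\partial\dot D_{m+1}}$ to $\Lambda$ pulls back the spin structure of $\Lambda$ to a spin structure on the trivial real rank-$n$ bundle $\tilde u^*T\Lambda$ over an interval, which in turn singles out a preferred homotopy class of orthonormal frames (since two orthonormal frames on an interval differ by a map to $O(n)$, whose lifts to $Spin(n)$ are homotopic iff the original frames are spin-homotopic). Composing with the inclusion $O(n)\hookrightarrow U(n)$ gives $A_i:[p_i,p_{i+1}]\to U(n)$ that lifts $[A]|_{[p_i,p_{i+1}]}$; this is exactly the construction in [\cite{orientbok}, Section 3.4.2] and [\cite{pr}, Section 4.4]. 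Any two such choices are homotopic through lifts because the inclusion $Spin(n)\to Spin(n)$-frame bundle of $[p_i,p_{i+1}]$ has contractible fibers of choices.

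For well-definedness up to homotopy I would check two things. First, the arc-by-arc construction is independent, up to homotopy, of the choice of $\tau$ in step one: changing $\tau$ by a homotopy $h_s:D_{m+1}\to U(n)$ conjugates $A$ by $h_s$, producing a homotopy of lifts. Second, the overall lift is independent, up to homotopy, of the choice of spin trivialization on each arc, by the argument in the previous paragraph. The main obstacle I anticipate is bookkeeping at the punctures: the lifts $A_i$ do not a priori agree at the asymptotic Reeb-chord limits, but this is not required—$A$ is only demanded to be a map on $\partial D_{m+1}$ away from the punctures, and the compatibility with the chosen capping operators at the punctures is precisely the content of the gluing condition used later in Section \ref{sec:orientcap}, not of this proposition. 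Once this is recognized, the proof reduces to citing [\cite{fooo}, Section 8.1] for the determinant orientation and the explicit trivialization recipe of [\cite{orientbok}, Section 3.4.2], [\cite{pr}, Section 4.4].
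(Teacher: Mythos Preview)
Your proposal is correct and essentially follows the paper's approach: the paper gives no detailed argument but simply records that the proposition follows from [\cite{orientbok}, Section 3.4.2], [\cite{pr}, Section 4.4] and [\cite{fooo}, Section 8.1], precisely the references you invoke and whose standard construction you have sketched. Your two-step decomposition (complex trivialization over the contractible disk, then spin-determined frame lift on the boundary arcs) is exactly the content of those citations, and your observation that puncture-compatibility is deferred to the capping discussion in Section~\ref{sec:orientcap} is also how the paper organizes things.
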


%

In this paper we use the following conventions.  If $\Lambda$ is a
spin Legendrian and $L$ is the Lagrangian cylinder $\R \times
\Lambda$, then we give $L$ the spin structure induced  from the spin
structure of $\Lambda$ and the trivial spin structure on $\R$. If $L$
is a spin, exact Lagrangian cobordism with cylindrical ends
$\Lambda_\pm$, then we require that $\Lambda_\pm$ are given the
boundary spin structures induced by $L$.  We refer to
[\cite{orientbok}, Section 4.4] for a  discussion on how other choices of spin structure affect the orientations of the moduli space of pseudo-holomorphic disks. 

From [\cite{orientbok}, Lemma 3.11] together with our orientation convention \eqref{eq:o2} we get the following useful result.

\begin{lma}\label{lma:kan}
 Let $\dbar_A$, $\dbar_B$ be two problems defined on the non-punctured unit disk, where $A$ and $B$ are trivialized Lagrangian boundary conditions. Let $\dbar_{A\#B}$ denote the problem induced by gluing $\dbar_A$ to $\dbar_B$. If $\det \dbar_A$ and $\det \dbar_B$ are given their canonical orientation, and $\R^n$ is given its fixed orientation, then the gluing sequence 
\begin{equation}\label{eq:gluconv2}
 0 \to \kd_{A\#B}
 \xrightarrow{\alpha}
 \begin{bmatrix}
  \kd_B \\
  \kd_A
 \end{bmatrix}
 \xrightarrow{\beta}
\begin{bmatrix}
 \cd_B \\
 \R^n \\
 \cd_A
\end{bmatrix}
 \xrightarrow{\gamma}
\cd_{A\#B} \to 0
\end{equation} 
 induces the canonical orientation on $\det \dbar_{A \# B}$. Here the $\R^n$-summand comes from gluing non-punctured disks, compare [\cite{orientbok}, Lemma 3.1].
\end{lma}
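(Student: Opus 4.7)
The plan is to deduce Lemma~\ref{lma:kan} directly from [\cite{orientbok}, Lemma~3.11], by book-keeping the effect of the new sign convention \eqref{eq:o2}. First I would recall that [\cite{orientbok}, Lemma~3.11] establishes exactly the same gluing statement, but with respect to the convention for \eqref{eq:phi} used in \cite{orientbok}, and with the resulting orientation on $\det\dbar_{A\#B}$ differing from the canonical one by a sign $(-1)^{\frac{1}{2}(n-1)(n-2)}$. By Remark~\ref{rmk:kan}, this sign is precisely what the present convention \eqref{eq:o2} is designed to cancel, so the lemma will follow once this cancellation is verified in the setting of the gluing sequence \eqref{eq:gluconv2}.

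Next I would isolate the difference between the two conventions. Given a short exact sequence \eqref{eq:es}, the two conventions yield isomorphisms $\bigwedge^{\max}V_1\otimes\bigwedge^{\max}V_2^* \to \bigwedge^{\max}W_1\otimes\bigwedge^{\max}W_2^*$ that differ only by a Koszul sign coming from the order in which the basis vectors $v_i, w_j$ and $u_k, \beta(w_j)$ are wedged on the target side. The ratio of the two conventions therefore depends solely on $\dim V_1, \dim V_2$ and the number $m$ of ``gluing vectors'' $w_j$. The key observation is that this ratio is exactly $(-1)^{\frac{1}{2}(n-1)(n-2)}$ when applied to the non-punctured closed-disk gluing sequence \eqref{eq:gluconv2}; this is the point at which the extra $\R^n$-summand in the middle column plays its role, since it controls the number of vectors that have to be transposed past the $\cd_B$ and $\cd_A$ factors.

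Finally I would combine the two inputs: apply [\cite{orientbok}, Lemma~3.11] to get an orientation on $\det\dbar_{A\#B}$ which equals $(-1)^{\frac{1}{2}(n-1)(n-2)}$ times the canonical one, then observe that reinterpreting the induced map \eqref{eq:phi} via \eqref{eq:o2} multiplies this orientation by $(-1)^{\frac{1}{2}(n-1)(n-2)}$. The two signs cancel, leaving the canonical orientation on $\det\dbar_{A\#B}$, which proves the lemma. The main obstacle I anticipate is making the Koszul-sign computation for the transition between conventions fully explicit in the presence of three summands in the middle column of \eqref{eq:gluconv2}; once the ordering of bases in the statement of [\cite{orientbok}, Lemma~3.11] is matched up with the ordering dictated by \eqref{eq:gluconv2}, the rest is bookkeeping.
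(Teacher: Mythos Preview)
Your proposal is correct and follows essentially the same route as the paper. The paper does not give a detailed proof of this lemma; it simply introduces it with the sentence ``From [\cite{orientbok}, Lemma 3.11] together with our orientation convention \eqref{eq:o2} we get the following useful result,'' and Remark~\ref{rmk:kan} already records that the effect of the convention \eqref{eq:o2} is precisely to cancel the sign $(-1)^{\frac12(n-1)(n-2)}$ appearing in [\cite{orientbok}, Lemma 3.11]. Your plan is exactly this argument spelled out, so there is nothing substantively new to compare.

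One small point worth keeping in mind as you write out the details: besides the exact-sequence convention \eqref{eq:o2}, the paper also reverses the internal order in the gluing sequence relative to \cite{orientbok} (the $B$-summands sit above the $A$-summands, cf.\ \eqref{eq:gluconv}). For this particular lemma both $\dbar_A$ and $\dbar_B$ are problems on the non-punctured disk, so $A$ and $B$ are interchangeable labels and the order swap amounts to a relabeling rather than an extra sign; but when you match the ordering of bases in [\cite{orientbok}, Lemma 3.11] against \eqref{eq:gluconv2}, make sure you are comparing like with like so that the only residual discrepancy is the Koszul sign coming from \eqref{eq:o2}.
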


\subsection{Capping operators}\label{sec:orientcap}
Let $u$ be a holomorphic disk of $\Lambda$ or of $L$. As pointed out above, to give a sign to $u$ is related to give an orientation to the determinant line of $\dbar_u$. All this must be done in a coherent way, so that 
we get 
$\partial^2 =0$ and $\Phi_L \circ \partial = \partial \circ \Phi_L$ in the very end. 

The idea from \cite{orientbok} is to use the trivialized Lagrangian boundary condition of $u$, induced by the spin structure of $\Lambda$ or of $L$, together with the canonical orientation of $\det \dbar$ over the space of trivialized Lagrangian boundary conditions on the non-punctured disk. To make this work, we need to choose a way to close up the trivialized boundary conditions of $u$ at the punctures. In \cite{orientbok} this is done by using something called \emph{capping operators}, and this is the method that we will use. We give an outline of the constructions, and also explain the modifications needed to carry it over to the symplectization. 

\subsubsection{Capping trivializations}
The capping operators are $\dbar$-operators defined on the
$1$-punctured unit disk in $\C$, and we have two operators,
$\dbar_{p,+}$ and $\dbar_{p,-}$, associated to each Reeb chord $p$ of
$\Lambda$. The reason for this is that we need one capping operator
for $p$ in the case when $p$ occurs as a positive puncture of a disk,
and another capping operator for $p$ when $p$ occurs as a negative puncture. 

To each capping operator $\dbar_{p,\pm}$ we have an associated trivialized Lagrangian boundary condition $R_{p, \pm}$, which is chosen in a way so that we get a trivialized boundary condition on the non-punctured disk after having glued all the capping operators corresponding to the punctures of $u$ to  $\dbar_u$. We call the boundary conditions $R_{p, \pm}$ the \emph{capping trivializations}.

There are different possibilities to define $R_{p,\pm}$. See for example [\cite{orientbok}, Section 3.3], [\cite{orientbok}, Section 4.5], and [\cite{orienttrees}, Section 3.4.C]. We will not fix a specific system of capping trivializations in the present paper, instead we consider any system that satisfies certain conditions, listed below. In particular, the systems defined in \cite{orientbok} and in \cite{orienttrees} satisfy these conditions. Before stating the conditions, we first have to discuss a stabilization of the tangent bundle of $\Lambda$ and of $L$, made by adding a trivial bundle.

\subsubsection{Auxiliary directions} In [\cite{orientbok}, Section 3.3.3], something called \emph{auxiliary directions} are introduced. These are artificial extra directions that are added to the  capping trivializations and to the Lagrangian trivializations induced by the pseudo-holomorphic disks. The main reason for doing this is to get the invariance proof of Legendrian contact homology over $\Z$ to work out well. These extra directions also simplify the work of assuring that we get a trivialized boundary condition on the non-punctured disk when we glue the capping operators to $\dbar_u$. 

In the case when we are considering Legendrian knots $\Lambda \subset
\R^3$ (i.e.\ when $n=1$) we add one auxiliary direction, to get the stabilized tangent space $\tilde T\Pi_\C(\Lambda) = T \Pi_\C(\Lambda) \oplus \R$. 
In the more general setting when  $n\geq 2$ we add
two auxiliary directions, to get the stabilized tangent space $\tilde T\Pi_\C(\Lambda) = T \Pi_\C(\Lambda) \oplus \R^2$. 
In the case of an exact cobordism $L$ we do the similar thing, so that we get a stabilized tangent space $\tilde TL = T L \oplus \R^i$, where $i = 1$ if $n=1$ and $i=2$ if $n \geq 2$.
We will
in what follows use $d_A$ for the dimension of the auxiliary space
added. That is, if $n=1$ then $d_A=1$, and if $n>1$ then $d_A=2$.

\begin{rmk}
 The reason of adding only one auxiliary direction for $n=1$ is to get compatibility with [\cite{orientbok}, Section 4.5]. We can as well consider the case of adding a $2$-dimensional auxiliary space for all possible $n$. This is easily seen by tracing the proofs in Subsection \ref{sec:pfger} and Section \ref{sec:cob-pf}.  
\end{rmk}

If $u$ is a pseudo-holomorphic disk of $\Lambda$ or of $L$, then the
linearized $\dbar_u$-problem is extended to the auxiliary directions
so that it gives an isomorphism here. See [\cite{orientbok}, Section 3.3.3]. Thus we get a canonical isomorphism between the determinant line of the original $\dbar_u$-problem and the extended one. With abuse of notation, we let $\dbar_u$ denote the extended problem from now on. 

 \subsubsection{System of capping operators for disks in $T^*M$.}
The capping operators $\dbar_{p,\pm}$ are also extended to the
auxiliary directions,  but will in general not give isomorphisms in
these directions. To describe the properties that we require the
capping operators to have, recall that we assume that we have fixed a
trivialization of the Lagrangian boundary conditions of $\dbar_u$ (now
also extended to the auxiliary directions, using the trivial spin
structure here. See [\cite{orientbok}, Section 3.4.2]).
If $p$ is a puncture of $u$, then notice that we have two
Lagrangian subspaces associated to $u$ at $p$, given by the two
stabilized tangent spaces of $\Pi_\C(\Lambda)$ at $p$. From the fixed
trivialization we then get oriented frames for these two spaces. Let
$p_+, p_-$ denote the endpoints of the Reeb chord of $\Lambda$
corresponding to $p$, where $p_+$ corresponds to the end with largest
$z$-coordinate. Let $X_\pm$ denote the oriented frame of the
stabilized tangent space of  $\Pi_\C(\Lambda)$ at $p$  that lifts to $T_{p\pm}\Lambda$.

   We define a \emph{system of capping operators for $\Lambda$} to be
   a set $\mathcal{S}$ consisting of $\dbar$-operators defined on the
   one-punctured unit disk in $\C$, such that for each Reeb chord $p$
   of $\Lambda$ we have a pair of $\dbar$-operators $\dbar_{p,+}$,
   $\dbar_{p,-} \in \Sy$ with associated trivialized boundary
   conditions $R_{p,\pm}: \partial D_1 \to U(n+d_A)$. Moreover, as a
   part of the data of $\Sy$ we choose an orientation of $\det \dbar_{p,-}$ for each Reeb chord $p$. 

   We say that the system is \emph{admissible} if the operators satisfy the following:  
 \begin{enumerate}
   \labitem{{(c1)}}{c1} $R_{p,-}$ takes the oriented frame $X_+$ to the oriented frame $X_-$;
   \labitem{{(c2)}}{c2} $R_{p,+}$ takes the oriented frame $X_-$ to the oriented frame $X_+$;
   \labitem{{(c3)}}{c3}
   \begin{align*}
     &\dim \kd_{p,+} \equiv 0, & \dim \cd_{p,+} &\equiv  |p| + n +d_A+1,& \\
     &\dim \kd_{p,-} \equiv 1, & \dim \cd_{p,-} &\equiv  |p|,&             
   \end{align*}
everything modulo 2.
\end{enumerate}

\begin{rmk}
  The author has not been able to prove Theorem \ref{thm:georgios} -- \ref{thm:differentcappings} for capping
  operators not satisfying \ref{c1} -- \ref{c3}, but believes it
  should be possible.
\end{rmk}

\begin{rmk}\label{rmk:notv}
 Note that  \ref{c1} -- \ref{c3} is not vacuous, since it is satisfied by the capping operators from \cite{orientbok} and from \cite{orienttrees}. This follows from [\cite{orientbok}, Section 3.3.6] for $d_A=2$, [\cite{orientbok}, Section 4.5.2] together with [\cite{legsub}, Proposition 8.14] for $d_A=1$, and from [\cite{orienttrees}, Corollary 3.31].
\end{rmk}

\subsubsection{Capping trivialization in the symplectization-direction}
We extend the boundary conditions $R_{p,\pm}$ to the symplectization,
by defining them to be given by the identity in the
$\R_t$-direction. We denote the induced capping operators by
$\dbar_{s,p,\pm}$, and we use the notation $\dbar_{l,p,\pm}$ for the
capping operators for disks in $T^*M$ defined above (that is,
$\dbar_{l,p,\pm}$ is the restriction of $\dbar_{s,p,\pm}$ to $T^*M$,
but still extended to the auxiliary space).


We need to consider weighted Sobolev spaces to get the capping operators
$\dbar_{s,p,\pm}$ to be Fredholm. To that end, we put a small positive
exponential weight at the puncture in the $\partial_t$-direction. We get the following.

\begin{prp}\label{cor:capiso}
 For each Reeb chord $p$ of $\Lambda$ the projection $\pi_P$ extends to canonical isomorphisms
 \begin{align*}
  \pi_P : \kd_{s,p,\pm} \to \kd_{l,p,\pm},  \\
  \pi_P : \cd_{s,p,\pm} \to \cd_{l,p,\pm}.
 \end{align*}
\end{prp}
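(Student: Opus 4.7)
The plan is to exploit the cylindrical structure of $J$ to decompose $\dbar_{s,p,\pm}$ as a direct sum, one summand of which is $\dbar_{l,p,\pm}$ and the other of which is a model operator with trivial kernel and cokernel. Then $\pi_P$ becomes projection to the first summand and is automatically an isomorphism on kernels and cokernels.

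First I would set up the splitting. Because $J$ is cylindrical, the decomposition $T(\R\times J^1(M))|_{\R\times\Lambda}=\langle\partial_t,R_\alpha\rangle\oplus\xi$ is $J$-invariant, and the Lagrangian $\R\times\Lambda$ has tangent $\R\cdot\partial_t\oplus T\Lambda$ with respect to this splitting. Since $R_{p,\pm}$ was extended to the symplectization by the identity in the $\R_t$-direction, the operator $\dbar_{s,p,\pm}$ respects the induced decomposition of the target and splits as $D_{p,\pm}\oplus\dbar_{l,p,\pm}$. Here $D_{p,\pm}$ is a $\dbar$-operator on the $1$-punctured unit disk with values in $\C\cong\langle\partial_t,R_\alpha\rangle$, with constant totally real boundary condition $\R\cdot\partial_t$, and the prescribed small exponential weight at the puncture in the $\partial_t$-direction.

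Second, I would show that $D_{p,\pm}$ has trivial kernel and cokernel. For the kernel, any element is a holomorphic function on the punctured disk with real boundary values, and the weight forces decay at the puncture in the appropriate sense. Schwarz reflection across $\partial D$ then produces a holomorphic function on $\C P^1$ minus the puncture with the required decay, and Liouville forces it to vanish. The cokernel then vanishes because the weighted index of $D_{p,\pm}$ is zero; this is exactly the model computation underlying [\cite{pr}, Lemma 4.3] and [\cite{Georgios}, Lemma 8.2] for the symplectization direction.

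Finally, from the direct-sum decomposition together with $\ker D_{p,\pm}=\coker D_{p,\pm}=0$, the projection to the $\xi$-factor of the target (which is the linearization of $\pi_P$ in our trivializations) restricts to bijections
\begin{align*}
  \pi_P&\colon\kd_{s,p,\pm}\to\kd_{l,p,\pm},\\
  \pi_P&\colon\cd_{s,p,\pm}\to\cd_{l,p,\pm}.
\end{align*}
These are the canonical isomorphisms the proposition asserts. The main obstacle is the weighted Fredholm analysis of $D_{p,\pm}$: one must choose $\epsilon>0$ small enough to land in the Fredholm regime for every Reeb chord simultaneously and to kill off the translational kernel in the $\partial_t$-direction that would otherwise survive.
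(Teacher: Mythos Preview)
Your proposal is correct and follows essentially the same approach as the paper: split $\dbar_{s,p,\pm}$ into the $\R_t$-direction summand and $\dbar_{l,p,\pm}$, and show the former is an isomorphism. The paper is terser, citing index formulas from \cite{legsub} and a Fourier-expansion argument from \cite{jholo} in place of your Schwarz-reflection/Liouville sketch for the vanishing of $\ker D_{p,\pm}$, but the content is the same.
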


\begin{proof}
 We need to prove that in the $\R_t$-direction the capping operators $\dbar_{s,p,\pm}$ are
isomorphisms. But this follows from [\cite{legsub}, Proposition 8.14 and Proposition 8.16] together with the fact that we are considering a $\dbar$-problem with a one-dimensional Lagrangian boundary condition given by $\R$. More directly, this can be seen by using Fourier expansion as in the proof of [\cite{jholo}, Theorem C.4.1].
\end{proof}

\subsubsection{Orientation of capping operators}
Next we define the orientation of the capping operators $\dbar_{s,p,\pm}$, $\dbar_{l,p,\pm}$, by slightly adjusting the constructions from [\cite{orientbok}, Section 3.3] to our situation. 

Recall that for each Reeb chord $p$ we are assumed to  fix an orientation of $\det
\dbar_{l,p,-}$ when we specify our system of capping operators. This will be the \emph{capping orientation  of
  $\dbar_{l,p,-}$}. Notice that this canonically induces an orientation
of $\det \dbar_{s,p,-}$ via the isomorphism in Proposition \ref{cor:capiso}.

To define the orientation of $\det
\dbar_{s,p,+}$, let $\dbar_{s,p}$ denote the $\dbar$-problem on the
non-punctured disk obtained by gluing the $\dbar_{s,p,+}$-problem to the
$\dbar_{s,p,-}$-problem, and consider the induced exact gluing sequence
\begin{equation}\label{eq:glucap}
0 \to
 \Ker \dbar_{s,p} \to 
 \begin{bmatrix}
  \Ker \dbar_{s,p,+} \\
 \R_t\\
 \Ker \dbar_{s,p,-}
 \end{bmatrix}
 \to
 \begin{bmatrix}
  \coker \dbar_{s,p,+}\\
  \coker \dbar_{s,p,-}
 \end{bmatrix}
 \to 
 \coker \dbar_{s,p} \to 0.
\end{equation}
Here the $\R_t$-summand corresponds to a gluing kernel which is born when gluing positive weighted Sobolev spaces, compare [\cite{orienttrees}, Lemma 3.16]. 
The chosen capping orientation of $\dbar_{s,p,-}$ together with the canonical orientation of $\det \dbar_{s,p}$ and the natural orientation of $\R_t$, induces an orientation $\Or(p_+)$  of $\det \dbar_{s,p,+}$, via the sequence \eqref{eq:glucap} and the isomorphism \eqref{eq:phi}.
\begin{defi}\label{def:capor}
  We define the \emph{capping orientation of $\dbar_{s,p,+}$} to be
  $(-1)^{|p|+n + d_A + 1} \Or(p+)$, and we will refer to the $\dbar_{s,p}$ -problem as \emph{the glued capping disk at $p$}.   
\end{defi}

We give the operator $\dbar_{l,p,+}$ the capping orientation induced by  the capping orientation of $\dbar_{s,p,+}$ under the isomorphism $\pi_P$ from Proposition \ref{cor:capiso}. We let $\dbar_{l,p}$ denote the $\dbar$-problem on the non-punctured disk obtained from gluing $\dbar_{l,p,+}$ to $\dbar_{l,p,-}$, and we prove the following.
 
\begin{lma}\label{lma:lma1}
  Assume that we have chosen an admissible system of capping operators
  for $\Lambda$.
  Then the capping orientations of $\dbar_{l,p,+}$ and $\dbar_{l,p,-}$ glue to the canonical orientation of $\dbar_{l,p}$, times $(-1)^{|p|+n+d_A+1}$, under the exact gluing sequence
\begin{equation}\label{eq:glucaplag}
0 \to \Ker \dbar_{l,p} \to 
 \begin{bmatrix}
  \Ker \dbar_{l,p,+} \\
 \Ker \dbar_{l,p,-}
 \end{bmatrix}
 \to
 \begin{bmatrix}
  \coker \dbar_{l,p,+}\\
  \coker \dbar_{l,p,-}
 \end{bmatrix}
 \to 
 \coker \dbar_{l,p} \to 0.
\end{equation}

 \end{lma}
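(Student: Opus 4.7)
The plan is to transport the defining relation for the capping orientations from the symplectization down to the Lagrangian projection, exploiting Proposition \ref{cor:capiso} and the fact that on the glued non-punctured disk the operator $\dbar_{s,p}$ splits off a standard $\dbar$-problem in the $\partial_t$-direction.

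By Definition \ref{def:capor}, the sequence \eqref{eq:glucap}, equipped with the capping orientations of $\det \dbar_{s,p,\pm}$ and the natural orientation of the $\R_t$-summand, induces $(-1)^{|p|+n+d_A+1}$ times the canonical orientation of $\det \dbar_{s,p}$. Since the capping orientations of $\det \dbar_{l,p,\pm}$ are defined to correspond to those of $\det \dbar_{s,p,\pm}$ under $\pi_P$, what remains is to compare the canonical orientations on $\det \dbar_{s,p}$ and $\det \dbar_{l,p}$ and to verify that the $\R_t$-correction in \eqref{eq:glucap} disappears cleanly on passing to $T^*M$. On the non-punctured disk, the identity capping in the $\partial_t$-direction lets us split $\dbar_{s,p}$ as the direct sum of $\dbar_{l,p}$ and the standard $\dbar$-problem on the disk with $\R$-boundary, the latter having one-dimensional kernel (the real constants, identified with $\R_t$) and vanishing cokernel. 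Multiplicativity of canonical orientations under direct sums of non-punctured problems, cf.\ [\cite{fooo}, Section 8.1] and Lemma \ref{lma:kan}, then identifies the canonical orientation of $\det \dbar_{s,p}$ with that of $\det \dbar_{l,p}$ tensored with the positive orientation on this $\R_t$-kernel.

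Using these identifications together with Proposition \ref{cor:capiso}, the exact sequence \eqref{eq:glucap} decomposes as the direct sum of \eqref{eq:glucaplag} with the trivial exact sequence $0 \to \R_t \xrightarrow{\id} \R_t \to 0 \to 0$, where the middle-column $\R_t$ occupies the slot of the weight-induced gluing kernel. Since both appearances of $\R_t$ carry the natural orientation and the connecting map between them is the identity, cancelling this trivial subsequence under the convention \eqref{eq:o2} introduces no residual sign, and one concludes that the capping orientations of $\det \dbar_{l,p,\pm}$ glue via \eqref{eq:glucaplag} to $(-1)^{|p|+n+d_A+1}$ times the canonical orientation of $\det \dbar_{l,p}$. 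The main obstacle is the positional bookkeeping under \eqref{eq:o2}: one must check that the placement of the $\R_t$-summand in the middle column of \eqref{eq:glucap} matches its placement in the canonical-orientation splitting of $\det \dbar_{s,p}$, so that no extra sign is absorbed in the cancellation. Once this compatibility is verified, the lemma follows directly from Definition \ref{def:capor}.
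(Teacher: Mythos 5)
Your overall route is the same as the paper's: start from Definition \ref{def:capor}, which says that \eqref{eq:glucap} with the capping orientations induces $(-1)^{|p|+n+d_A+1}$ times the canonical orientation of $\det \dbar_{s,p}$; use Proposition \ref{cor:capiso} to identify the capping orientations of $\dbar_{l,p,\pm}$ with those of $\dbar_{s,p,\pm}$; split off the $\partial_t$-direction of the glued non-punctured problem, whose kernel is the constants $\R_t$ and whose cokernel vanishes, compatibly with the canonical orientation because that orientation is defined via evaluation at the boundary; and then cancel the resulting $\R_t$-factor of $\Ker \dbar_{s,p}$ against the weight-induced gluing kernel $\R_t$ in the second column of \eqref{eq:glucap}, leaving exactly \eqref{eq:glucaplag}.

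The one point where the lemma requires an actual computation is precisely the point you defer as ``positional bookkeeping \dots once this compatibility is verified.'' It is not automatic that the cancellation is sign-free under \eqref{eq:o2}: the gluing-kernel $\R_t$ sits in the second column \emph{below} $\Ker \dbar_{s,p,+}$, while the $\R_t$-factor of $\Ker \dbar_{s,p}$ maps onto it by the identity (Remark \ref{rmk:glumap}); bringing the two into cancelling position costs $(-1)^{\dim \Ker \dbar_{s,p,+}}$. This equals $+1$ exactly because condition \ref{c3} of admissibility forces $\dim \Ker \dbar_{p,+}$ to be even — and this is the only place the admissibility hypothesis of the lemma enters. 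Your argument never invokes \ref{c3}, so as written it asserts rather than proves the absence of a residual sign. Adding that one-line parity count closes the gap; everything else in your proposal agrees with the paper's proof.
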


 \begin{rmk}
 Since we use the gluing convention \eqref{eq:gluconv}, this gives the opposite convention of [\cite{orientbok}, Section 3.3.4]. Also notice that the sign $(-1)^{|p|+n+d_A+1}$ in the definition of the orientation of the capping operators is not used in that paper. 
 \end{rmk}

 \begin{proof}[Proof of Lemma \ref{lma:lma1}]
By construction we have that $\kd_{s,p} \simeq \R_t \oplus \kd_{l,p}$, and     
 from Remark \ref{rmk:glumap} we see that the first nontrivial map in  \eqref{eq:glucap} restricted to the $\R_t$-factors is given
by projection $v \mapsto v$. Indeed, the kernel that is born during the gluing is cut-off and embedded in the Sobolev space of the glued map, and the gluing map $\alpha$ in \eqref{eq:gluconv} was given by $L^2$-projection. Since by the assumptions \ref{c3} we have $\dim \kd_{s,p,+} \equiv 0$ modulo 2, we can remove $\R_t$ from both
the first and second nontrivial column without affecting orientations on the remaining spaces. But after removing $\R_t$ we get the gluing sequence for the capping operators in the Lagrangian projection, and since the canonical orientation is given via evaluation the result follows.  
 \end{proof}

From now on, we use the notation $\dbar_{p,\pm}$ to denote the capping operators both in the symplectization-setting and in the setting of the Lagrangian projection.

\subsection{Capping orientation of disks}\label{sec:orientdisk}
Now we give the definition of the capping orientation of a punctured pseudo-holomorphic disk. Below $X$ denotes either $T^*M$ or $\R\times J^1(M)$, with almost complex structure $J$ as described in Section \ref{sec:holo}.

If $L$ is an exact Lagrangian cobordism with cylindrical Legendrian ends $\Lambda_\pm$, then assume that $\Sy_\pm$ gives a system of capping operators for $\Lambda_\pm$. This  gives rise to an \emph{induced system of capping operators of $L$}, where the positive capping operators $\dbar_{p,+}$ are taken from the system $\Sy_+$, and the negative capping operators $\dbar_{p,-}$ are taken from $\Sy_-$. This system is \emph{admissible} if both $\Sy_\pm$ are admissible. 

 Let $u :D_m \to X$ be a pseudo-holomorphic disk of $\Lambda$ or of $L$, with positive puncture $a$ and negative punctures $b_1,\dotsc,b_m$. Assume that we have fixed a system of capping operators,  and consider the exact gluing sequence 
 \begin{equation}\label{eq:caporient}
  0 \to \Ker \dbar_{\hat u} 
  \to
  \begin{bmatrix}
   \Ker \dbar_u \\
   \Ker \dbar_{a,+} \\
   \Ker \dbar_{b_m,-} \\
   \vdots\\
   \Ker \dbar_{b_1,-}
  \end{bmatrix}
\to  
\begin{bmatrix}
  \coker \dbar_u \\
  \coker \dbar_{a,+} \\
  \coker \dbar_{b_m,-} \\
  \vdots\\
  \coker \dbar_{b_1,-}
  \end{bmatrix}
  \to
  \coker \dbar_{\hat u}
  \to 0.
 \end{equation}
Here $\dbar_{\hat u}$ denotes the $\dbar$-problem on the non-punctured
disk with trivialized boundary condition $\hat u$, which is obtained
by gluing the trivialized boundary condition induced by $u$ to the
positive capping trivialization of $a$ at the positive puncture of
$D_m$, and then to the negative capping trivializations of
$b_m,\dotsc,b_1$ at the corresponding negative punctures. We refer to
the $\dbar_{\hat u}$-problem as the \emph{fully capped problem
  corresponding to $u$}, and to the sequence \eqref{eq:caporient} as the
\emph{capping sequence for $u$}.

\begin{rmk}
 The reason that the capping operators are glued clockwise along $u$, instead of counterclockwise as in [\cite{orientbok}, Section 3.3.5], is because of our choice of convention \eqref{eq:gluconv}, which in turn depended on the fact that we have to take into account the extra $\R$-direction coming from the symplectization. This is explained further in Section \ref{sec:clock}.
\end{rmk}

Let $\ind (u)$ denote the Fredholm index of $\dbar_u$, restricted to the space of candidate maps. That is, if $u$ is rigid and has $m$ negative punctures, then
\begin{equation*}
 |\ind (u)| \equiv m \pmod{2}.
\end{equation*}

\begin{defi}\label{def:capping}
  We define the \emph{capping orientation} of $u \in \M_{l, \Lambda}(a, {\bf b}),\M_L(a, {\bf b})$ to be the orientation $\Or(\dbar_u)$ on $\det \dbar_u$ induced by the gluing sequence \eqref{eq:caporient}, where $\det \dbar_{\hat u}$ is given the canonical orientation, and the capping operators are given their capping orientations. For $ u \in \M_{s, \Lambda}(a, {\bf b})$ we define the capping orientation to be given by $\det \dbar_{\pi_p(u)}\wedge \partial_{t} = (-1)^{\ind (\pi_p(u))}\partial_{t} \wedge \det \dbar_{\pi_p(u)}$, where $\det \dbar_{\pi_p(u)}$ is given its capping orientation and $\partial_{t}$ gives the positive orientation in the symplectization direction. 
\end{defi}

\begin{rmk}
 We  will use the notation $\partial_{\Sy}$ to indicate the dependence of the chosen capping system $\Sy$ in the definition for the DGA-differential.
\end{rmk}

In the case when $u$ is a rigid disk (and where we assume that we have
divided out the $\R_t$-action if $u$ is a disk of $\R \times
\Lambda$), the capping orientation of $u$ can be understood as an
orientation of the kernel or the cokernel of $\dbar_u$. Moreover, by
[\cite{orientbok}, Section 4.2.3], we may assume that we are in the
case when $\kd_u$ is trivial, so that an orientation of $\det \dbar_u$
is nothing but an orientation of $\cd_u$. Let $\kappa$ denote the
conformal structure of $u$, and recall that the space of
conformal variations at $\kappa$ was given a fixed
orientation in Section \ref{sec:confvar}. Thus, we can compare the capping
orientation of $u$ with 
this orientation via the isomorphism \eqref{eq:psibar} and  get a sign $\sigma(u) \in \{-1,1\}$. This sign is
the \emph{capping sign of $u$}, and is the one that we use in the the
algebraic count of the elements in the moduli spaces when defining the
DGA-morphisms and the DGA-differentials in (\ref{eq:differential}) and
(\ref{eq:phidisk}), respectively. See [\cite{orientbok},
Section 3.4.3].

\begin{rmk}\label{rmk:onedim}
In the case when $u$ belongs to a one-dimensional moduli space $\M$ (after having divided out the $\R_t$-action if $u$ is a disk of $\R\times \Lambda$) we get that the map given in \eqref{eq:psibar} is not an isomorphism, but is surjective with one-dimensional kernel $\Ker \bar \Psi$. Give this space the orientation so that
\begin{equation*}
 T\Co_{m} = \Ker \bar \Psi \oplus \cd_u,
\end{equation*}
as oriented vector spaces. Here $\Co_{m}$ is the space of conformal variations at $u$ and is given its fixed orientation, and $\cd_u$ is given its capping orientation.
The \emph{capping orientation of $\M$ at $u$} is given by this orientation of $\Ker \bar \Psi$, after having identified $\Ker \bar \Psi$ with $\Ker d\Gamma_u$.
\end{rmk}

\subsection{Proof of Theorem \ref{thm:georgios}}\label{sec:pfger}
To prove Theorem \ref{thm:georgios} it only remains to establish the following.

\begin{lma}\label{lma:lma3}
 Let $\Sy$ be an admissible system of capping operators and let $i=s$ or $i=l$. Then the map defined by
   \begin{equation*}
  \partial_{\Sy,i} a = \sum_{\dim \M(a,{\bf b}) = 0} | \M_{i,\Lambda}(a, {\bf b})| {\bf b}
 \end{equation*}
on generators and extended by the signed Leibniz rule to rest of the algebra, satisfies $\partial_{\Sy,i}^2 =0$. Here $| \M_{i,\Lambda}(a, {\bf b})|$ is the algebraic count of disks in $ \M_{i,\Lambda}(a, {\bf b})$, where each disk is counted with its capping sign induced by $\Sy$.
\end{lma}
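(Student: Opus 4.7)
My plan is to use the standard compactness-and-gluing argument adapted to our modified orientation conventions. I would first handle $i = l$ and then transfer the result to $i = s$ via $\pi_P$.

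For $i = l$, fix a Reeb chord $a$ and a word $\mathbf{b} = b_1\dotsm b_m$ with $\dim \M_{l,\Lambda}(a, \mathbf{b}) = 1$. By the compactness results from \cite{legsub} and \cite{ratsft}, the space $\M_{l,\Lambda}(a, \mathbf{b})$ admits a natural compactification whose codimension-one boundary consists of two-level broken configurations $u_1 \# u_2$, where $u_1 \in \M_{l,\Lambda}(a, \mathbf{b}_1 c \mathbf{b}_2)$ and $u_2 \in \M_{l,\Lambda}(c, \mathbf{b}')$, with $\mathbf{b} = \mathbf{b}_1 \mathbf{b}' \mathbf{b}_2$ and both $u_1, u_2$ rigid. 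Since the signed boundary count of a compact oriented 1-manifold vanishes, it is enough to establish the sign identity
\[
\epsilon(u_1,u_2) \,=\, (-1)^{|b_1|+\dotsb+|b_{|\mathbf{b}_1|}|}\,\sigma(u_1)\,\sigma(u_2),
\]
where $\epsilon(u_1,u_2)$ is the boundary orientation sign at the broken configuration coming from the capping orientation of the 1-dimensional moduli space (see Remark \ref{rmk:onedim}), and the right side is the contribution of $u_1 \# u_2$ to $\partial^2_{\Sy,l}\, a$ via the signed Leibniz rule.

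To prove this sign identity I would insert the glued capping disk $\dbar_c$ at the matched chord into the capping sequence \eqref{eq:caporient} for $u_1 \# u_2$, thereby splitting it as the concatenation of the capping sequences of $u_1$ and of $u_2$. By Lemma \ref{lma:kan} the canonical orientations on the fully capped problems glue correctly up to an explicit $\R^n$ summand, and by Lemma \ref{lma:lma1} the glued capping disk contributes the sign $(-1)^{|c|+n+d_A+1}$ relative to its canonical orientation. The main bookkeeping is the permutation sign required to reorder the kernel and cokernel columns of \eqref{eq:caporient} so as to reproduce the individual capping sequences of $u_1$ and $u_2$ in the correct order; by \ref{c3} the parities of $\dim \kd_{p,\pm}$ and $\dim \cd_{p,\pm}$ are controlled explicitly by $|p|$, $n$ and $d_A$, so this permutation sign is a computable function of $|b_1|,\dotsc,|b_{|\mathbf{b}_1|}|$ and $|c|$. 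The gluing of conformal-variation spaces contributes a further factor identified by Lemma \ref{lma:confvar}. All these contributions combine, and by design of Definition \ref{def:capor} the built-in correction $(-1)^{|c|+n+d_A+1}$ cancels the would-be global sign discussed in Remark \ref{rmk:dumsign}, leaving precisely the Leibniz sign.

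For $i = s$, I would reduce to the case $i = l$ by showing that $\pi_P : \M_{s,\Lambda}(a, \mathbf{b}) \to \M_{l,\Lambda}(a, \mathbf{b})$ is orientation-preserving on 0-dimensional moduli spaces (which is one of the claims of Theorem \ref{thm:georgios}). Comparing the two clauses of Definition \ref{def:capping}, the symplectization capping orientation of $u$ differs from that of $\pi_P(u)$ only by an inserted $\partial_t$ factor; Proposition \ref{cor:capiso} identifies the symplectization and Lagrangian capping sequences up to $\R_t$ summands at each capping, which by \ref{c3} pair off without introducing sign. The residual $\partial_t$ is exactly the direction quotiented out in passing from $\hat\M_{s,\Lambda}$ to $\M_{s,\Lambda}=\hat\M_{s,\Lambda}/\R$, so $\sigma(u) = \sigma(\pi_P(u))$ and hence $\partial_{\Sy,s} = \partial_{\Sy,l}$ under $\pi_P$, which gives $\partial_{\Sy,s}^2 = 0$ from the case $i=l$.

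The main obstacle will be the detailed sign computation at the broken configuration in the $i = l$ case. While the architecture of the argument mirrors \cite{orientbok}, each of our modifications — the gluing convention \eqref{eq:gluconv}, the conformal variation orientation \eqref{eq:confor}, and the capping sign $(-1)^{|c|+n+d_A+1}$ in Definition \ref{def:capor} — must be tracked simultaneously to verify that they cancel jointly the prefactor $(-1)^{(n-1)(|a|+1)}$ of Remark \ref{rmk:dumsign}, so that the differential takes the cleaner form \eqref{eq:differential}. The computation is mechanical but carries essentially all the technical content of the lemma.
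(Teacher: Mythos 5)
Your proposal is correct and follows essentially the same route as the paper: compactify the one-dimensional moduli space, compare the two gluing sequences (direct concatenation of the capping sequences of the two pieces versus the capping sequence of the glued disk spliced through the glued capping disk at the intermediate chord), track permutation signs via \ref{c3}, and feed in Lemma \ref{lma:confvar} and the correction from Definition \ref{def:capor}. The only cosmetic difference is that the paper carries out the sign bookkeeping explicitly in the symplectization setting (tracking the extra $\R_t$ gluing cokernel) and notes the Lagrangian-projection case is analogous, whereas you do the Lagrangian case first and transfer to the symplectization via $\pi_P$ and Definition \ref{def:capping}.
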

\begin{proof}
We follow the proof of Theorem 4.1 in \cite{orientbok}, and in
particular the notations therein. Briefly, the argument goes as
follows.

Let $a$ be a Reeb chord of $\Lambda$, and assume that $\partial^2(a)$ has a summand $N{\bf d}$, where $N \in \Z$ and $ {\bf d}= d_1 \cdots d_l$ is a word of Reeb chords. Then $\M(a, {\bf d})$ is one-dimensional and can be compactified by broken pairs of disks from zero-dimensional moduli spaces. Let $\M \subset \M(a, {\bf d})$ be a component with oriented boundary 
\begin{equation}
 \partial \M = \M_1 - \M_0,
\end{equation} 
where $\M_0$ is given by 2 broken disks $(u_1, u_2)$ with
\begin{align*}
 & u_2 \in \M(a; d_1 \cdots d_{k-1} c d_{k+m+1} \cdots d_l),\\
 & u_1 \in \M(c; d_k \cdots d_{k+m}),
\end{align*}
and $\M_1$ is given by 2 broken disks $(u_1', u_2')$ with
\begin{align*}
 & u_2' \in \M(a; d_1 \cdots d_{k'-1} c' d_{k'+m'+1} \cdots d_l),\\
 & u_1' \in \M(c'; d_{k'} \cdots d_{k'+m'}).
\end{align*}

Let $\mu_i$ be the capping sign of $u_i$, and let $\mu_i'$ be the capping sign of $u_i'$, $i= 1,2$. Then the broken disk $(u_1, u_2)$ contributes with $(-1)^{\sum_{i=1}^{k-1}|d_i|} \mu_1 \mu_2 {\bf d}$ to $\partial^2(a)$, and the broken disk $(u_1', u_2')$ contributes with $(-1)^{\sum_{i=1}^{k'-1}|d_i|} \mu_1' \mu_2' {\bf d}$. We claim that 
\begin{equation}\label{eq:claim}
 (-1)^{\sum_{i=1}^{k-1}|d_i|} \mu_1 \mu_2 = - (-1)^{\sum_{i=1}^{k'-1}|d_i|} \mu_1' \mu_2'.
\end{equation} 

To prove this claim we would like to use the orientation of $\M$. That is, by gluing, we have that $u_1\# u_2$  and $u_1'\# u_2'$ give elements in $\M$ with associated $\dbar$-operators $\dbar_{u_1\#u_2}$, $\dbar_{u_1'\#u_2'}$, equipped with capping orientations. Let $o_c(\dbar_{u_1 \# u_2})$ denote the capping orientation of $\det \dbar_{u_1 \# u_2}$ and let $o_i(\dbar_{u_1 \# u_2})$ denote the orientation induced on $\det \dbar_{u_1 \# u_2}$ by the gluing sequence 
\begin{equation}\label{eq:ny01}
0 \to
   \kd_{u_1 \# u_2}
\to 
\begin{bmatrix}
  \kd_{u_2} \\
  \kd_{u_1} \\
  \end{bmatrix}
\to
\begin{bmatrix}
  \cd_{u_2} \\
 \cd_{u_1} \\
\end{bmatrix}
\to
\cd_{u_1 \# u_2}
\to 0
\end{equation} 
in the setting of the Lagrangian projection, and by the sequence 
\begin{equation}\label{eq:nys01}
0 \to
   \kd_{u_1 \# u_2}
\to 
\begin{bmatrix}
  \kd_{u_2} \\
  \kd_{u_1} \\
  \end{bmatrix}
\to
\begin{bmatrix}
  \cd_{u_2} \\
  \R_t \\
 \cd_{u_1} \\
\end{bmatrix}
\to
\cd_{u_1 \# u_2}
\to 0
\end{equation} 
in the symplectization setting. Here we assume that $\det \dbar_{u_i}$ is given its capping orientation for $i=1,2$.  
The $\R_t$-summand in \eqref{eq:nys01} comes from a gluing cokernel that is born when we glue negative weighted problems, see [\cite{orientbok}, Lemma 3.1]. We define $o_c(\dbar_{u'_1 \# u'_2})$, $o_i(\dbar_{u'_1 \# u'_2})$ in a completely analogous way. 
  
Let 
\begin{equation*}
\delta = \dim \cd_{u_1} + \dim \cd_{u_2} +1 = \dim \cd_{u_1'} + \dim \cd_{u_2'} +1 
\end{equation*}
 if we are in the symplectization setting, and 
 \begin{equation*}
\delta = n + d_A+1  
 \end{equation*}
if we are in the setting of the Lagrangian projection. The claim \eqref{eq:claim} will follow if we can prove that
\begin{align}\label{eq:holm1}
 & o_c(\dbar_{u_1 \# u_2}) = (-1)^{(\dim \cd_{u_1}-1)k + \sum_{i=1}^{k-1}|d_i|+ \delta} o_i(\dbar_{u_1 \# u_2})  \\ \label{eq:holm2}
 & o_c(\dbar_{u'_1 \# u'_2}) = (-1)^{(\dim \cd_{u'_1}-1)k' + \sum_{i=1}^{k'-1}|d_i|+\delta} o_i(\dbar_{u'_1 \# u'_2}).
\end{align}
Before we derive these equations, we prove that they 
 imply \eqref{eq:claim}. Indeed, to relate the orientation of $\M$ with \eqref{eq:holm1}, we should consider the following commutative diagram
\begin{equation}\label{eq:twostar}
\xymatrix{
 T\Co_{u_2}\oplus T\Co_{u_1} \oplus \R \ar[d] \ar[r] & T\Co_{u_1\#u_2} \ar[d] \\
 \cd_{u_2} \oplus \cd_{u_1} \ar[r] & \cd_{u_1\#u_2}.
 }
\end{equation}
Here $\Co_v$ denotes the space of conformal structures of the domain of the  disk $v$, $v=u_1, u_2, u_1\#u_2$. This commuting diagram is similar to the one given in the proof of [\cite{orientbok}, Lemma 4.11], and the maps in the diagram are given as follows. 

\begin{enumerate}
 \labitem{{(hu)}}{hu} The upper horizontal map is given by the map in Lemma \ref{lma:confvar}.
 \labitem{{(hl)}}{hl} The lower horizontal map is the one induced by the gluing sequence \eqref{eq:ny01} in the setting of the Lagrangian projection and induced by the gluing sequence \eqref{eq:nys01} in the symplectization setting.
 \labitem{{(vl)}}{vl} The left vertical map is given by $(x,y,t) \mapsto(\bar\Psi_1(x), \bar \Psi_2(y))$, where $\bar \Psi_i$ is the map \eqref{eq:psibar} associated to $u_i$, $i=1,2$. 
 \labitem{{(vr)}}{vr} The right vertical map is given by $z \mapsto \bar \Psi(z)$, where $\bar \Psi$ is the map \eqref{eq:psibar} associated to $u_1\#u_2$. 
\end{enumerate}

Assume that $T\Co_{u_1\#u_2}, T\Co_{u_1}, T\Co_{u_2}$ are given their fixed orientations and that $\R$ is endowed with the orientation corresponding to the outward normal of $T\Co_{u_1\#u_2}$, as in Section \ref{sec:confvar}. Also assume that $\cd_{u_1\#u_2}, \cd_{u_1}, \cd_{u_2}$ are given their capping orientations. Then Lemma \ref{lma:confvar} implies that \ref{hu} is an isomorphism between oriented spaces of sign $(-1)^{\nu_1}$, 
\begin{equation*}
 \nu_1 = (\dim T\Co_{u_1}-1)\cdot k +1, 
\end{equation*}
and \ref{vl} restricted to $T\Co_{u_1} \oplus T\Co_{u_2}$ is by definition an isomorphism between oriented spaces of sign $(-1)^{\nu_2}$,
\begin{equation*}
 (-1)^{\nu_2} = \mu_1\mu_2.
\end{equation*}
From \eqref{eq:holm1} we get that the sign of \ref{hl} is given by $(-1)^{\nu_3}$,
\begin{equation*}
 \nu_3 = (\dim \cd_{u_1}-1)\cdot k + \sum_{i=1}^{k-1}|d_i|+ \delta.
\end{equation*}
Thus, by Remark \ref{rmk:onedim} together with the commutativity of \eqref{eq:twostar} we see that the orientation of $T_{u_1\#u_2}\M$ is given by 
\begin{align*}
 (-1)^{\nu_1 + \nu_2 + \nu_3}\R &= (-1)^{(\dim T\Co_{u_1}-1)\cdot k +1 + (\dim \cd_{u_1}-1)\cdot k + \sum_{i=1}^{k-1}|d_i|+ \delta}\mu_1\mu_2 \R\\
 &= (-1)^{1+\sum_{i=1}^{k-1}|d_i|+ \delta}\mu_1\mu_2\R,
\end{align*}
where $\R$ is given the orientation of the outward normal of $T\Co_{u_1\#u_2}$ at $u_1\#u_2$. 

Using the same argument at $u'_1\#u'_2$ we get in total that the orientation of $\M$ at the boundary component  $\M_0$ is given by $(-1)^{1+\sum_{i=1}^{k-1}|d_i|+ \delta}\mu_1\mu_2$ times the outer normal and the orientation of $\M$ at the boundary component $\M_1$ is given by $(-1)^{1+\sum_{i=1}^{k'-1}|d_i|+ \delta}\mu'_1\mu'_2$ times the outer normal. But since the orientation of $\M_1$ is opposite to the orientation of $\M_0$ we get \eqref{eq:claim}.

To prove \eqref{eq:holm1} and \eqref{eq:holm2} , consider the general situation where     
$A$ and $B$ are trivialized Lagrangian boundary conditions
 associated to punctured disks $D_{m}$ and $D_{r+1}$,
 respectively, and assume that we glue them together as described in
 Section \ref{sec:gluing}. That is, we glue the positive puncture of
 $\dbar_A$ to the $k$:th negative puncture of $\dbar_B$. 

Note that $\kd_{s,A} \simeq \R_t \oplus \kd_{l,A}$, $\cd_{s,A} \simeq  \cd_{l,A}$, and similarly for $\hat A$, $B$ and $\hat B$. We write $\dbar_{C}=\dbar_{l,C} $ to simplify notation, where $C$ is any trivialized Lagrangian boundary condition on the (possibly punctured) disk. 

Assume that $\dbar_A$ corresponds to a disk $u_1 \in \M(b_k,f_1\cdots f_{m-1})$ and that $\dbar_B$ corresponds to a disk $u_2 \in \M(a,b_1\cdots b_r)$, and that both these disks are rigid. Also assume that we are in the symplectization setting (the computations in the setting of the
Lagrangian projection are similar), and that $m>2, r>1$.

The sequence \eqref{eq:nys01} now reads
\begin{equation}\label{eq:ny1}
0 \to
   \kd_{s,A \# B}
\to 
\begin{bmatrix}
  \kd_{s,B} \\
  \kd_{s,A} \\
  \end{bmatrix}
\to
\begin{bmatrix}
  \cd_{s,B} \\
  \R_t \\
 \cd_{s,A} \\
\end{bmatrix}
\to
\cd_{s, A \#  B}
\to 0,
\end{equation}
which moreover can be simplified to 
\begin{equation}\label{eq:ny001}
0 \to
   \R_t^{A\#B}
\to 
\begin{bmatrix}
  \R_t^B \\
  \R_t^A 
  \end{bmatrix}
\to
\begin{bmatrix}
  \cd_{B} \\
  \R_t \\
 \cd_{A} 
\end{bmatrix}
\to
\cd_{A \#  B}
\to 0.
\end{equation}
Here we use the notation $\R_t^{A\#B}$ for the $\R_t$-factor in $\kd_{A\#B}$, and similar for $\R_t^A$, $\R_t^B$. We have also used the assumption that $m>2, r>1$, so that $\kd_A=\kd_B=\kd_{A\#B}=0$. 

By Remark \ref{rmk:glumap} and the proof of [\cite{orientbok}, Lemma 3.11], we may assume that the first non-trivial map in \eqref{eq:ny001} is given by $t \mapsto (t,t)$ and that the second map is given by $(s,t) \mapsto (0,t-s,0)$. By our orientation conventions \eqref{eq:o2} it follows that if we move $\R_t$ over $\cd_A$, with a cost of $(-1)^{\sigma_0}$,
\begin{equation}\label{eq:cokersign}
 \sigma_0 = \dim \cd_A,
\end{equation}
we can reduce \eqref{eq:ny001} to 
\begin{equation}
 0 \to
   0
\to 
0
\to
\begin{bmatrix}
  \cd_{B} \\
 \cd_{A} 
\end{bmatrix}
\to
\cd_{A \#  B}
\to 0,
\end{equation}
where the non-trivial map is the one that occurs in \eqref{eq:twostar} as \ref{hl}.

%
%

To compute the difference between the capping orientation of $\det \dbar_{s,A\#B}$ and the orientation induced by the capping orientations of $\det \dbar_{s,A}$ and $\det \dbar_{s,B}$ from the sequence \eqref{eq:ny001}, consider the following two gluing sequences 
\begin{equation}\label{eq:4.9.1}
  \begin{bmatrix}
   \R_t \\
   \kd_{\hat A \# \hat B}
  \end{bmatrix}
\to 
\begin{bmatrix}
 \begin{pmatrix}
  \R_t^B \\
  \hbp_{j=1}^r \kd_{b_{j},-} \\
 \end{pmatrix}\\
 \begin{pmatrix}
  \R_t^A \\
  \hbp_{j=1}^{m-1} \kd_{f_{j},-}
 \end{pmatrix}
\end{bmatrix}
\to
\begin{bmatrix}
 \begin{pmatrix}
  \cd_B \\
  \cd_{a,+} \\
  \hbp_{j=1}^r \cd_{b_{j},-}
 \end{pmatrix}\\
\R_t \\
\R^{n+d_A}\\
\begin{pmatrix}
 \cd_A \\
 \cd_{b_{k},+} \\
 \hbp_{j=1}^{m-1} \cd_{f_{j},-}
\end{pmatrix}
\end{bmatrix}
\to
\cd_{\hat A \# \hat B},
\end{equation}

\begin{equation}\label{eq:4.9.2}
  \begin{bmatrix}
   \R_t \\
   \kd_{\hat A \# \hat B}
  \end{bmatrix}
\to 
\begin{bmatrix}
 \begin{pmatrix}
   \R_t^B \\
   \R_t^A\\
  \hbp_{j=k+1}^r \kd_{b_{j},-} \\
  \hbp_{j=1}^{m-1} \kd_{f_{j},-} \\
  \hbp_{j=1}^{k-1} \kd_{b_{j},-} 
 \end{pmatrix}\\
 \begin{pmatrix}
  \R_t \\ 
  \kd_{b_{k},-}
 \end{pmatrix}
\end{bmatrix}
\to
\begin{bmatrix}
 \begin{pmatrix}
   \cd_B \\
   \R_t\\
  \cd_A \\
  \cd_{a,+} \\
  \hbp_{j=k+1}^r \cd_{b_{j},-}\\
 \hbp_{j=1}^{m-1} \cd_{f_{j},-} \\
  \hbp_{j=1}^{k-1} \cd_{b_{j},-}\\
 \end{pmatrix}\\
\R_t \\
\R^{n+d_A}\\
\begin{pmatrix}
 \cd_{b_{k},+} \\
 \cd_{b_{k},-}
\end{pmatrix}
\end{bmatrix}
\to
\cd_{\hat A \# \hat B}.
\end{equation}
Here we use the notation $\hbp_{j=1}^r V_j = V_r \oplus \dotsb \oplus V_1$. We have also omitted the trivial maps $0 \to \R_t \oplus \kd_{\hat A \# \hat B}$ and $\cd_{\hat A \# \hat B} \to 0$ to fit the sequences within page margins. In addition, we have dropped some spaces which by assumption have even dimension, since they will not affect the calculations below.  

The first sequence corresponds to the gluing of $\dbar_{s,\hat A}$ to $\dbar_{s,\hat B}$, and the second one to the gluing of $\dbar_{s,\widehat{A\#B}}$ to the glued capping disk at $b_k$. Here we have used the associativity of orientations under gluing, see [\cite{orientbok}, Section 3.2.3], together with the capping sequences for $\dbar_{s,A}$, $\dbar_{s,B}$ and $\dbar_{s,A\#B}$, respectively. By Lemma \ref{lma:kan} we get that both sequences induce the canonical orientation on $\dbar_{s,\hat A \# \hat B}$, given that   
$\dbar_{s,\hat A}$, $\dbar_{s,\hat B}$, $\dbar_{s,\widehat{A\#B}}$ and the glued capping disk at $b_k$ all are given their canonical orientation. 

\begin{rmk}\label{rmk:forenkla}
Notice that, if we compensate by an overall sign \linebreak
$(-1)^{\ind \dbar_A + \ind \dbar_B + \ind \dbar_{A\#B}} = 1$ at the end, we can instead of the capping orientations of $\dbar_{s,A}, \dbar_{s,B}, \dbar_{s,A\#B}$ consider the capping orientations of $\dbar_{A}, \dbar_{B}, \dbar_{A\#B}$ (assuming that all $\R_t$-summands are given their natural orientation). This follows from Definition \ref{def:capping}.
\end{rmk}

Thus, if we assume that $\dbar_A$ and $\dbar_B$ are given their capping orientations, we can compute the difference between the capping orientation of $\dbar_{A\#B}$ and the one induced by the sequence \eqref{eq:ny001} by rearranging the spaces in  \eqref{eq:4.9.1} to be in the same order as the spaces in \eqref{eq:4.9.2}, in the same time as we keep track of the change in orientations that these rearrangements induce. 

%
%
%
%

First, by Remark \ref{rmk:glumap} and the proof of [\cite{orientbok}, Lemma 3.11], we may assume that the bottom-most $\R_t$-summand in
the second column of \eqref{eq:4.9.2} is mapped by the identity to the
bottom-most $\R_t$-summand in the third column. Thus these two spaces
can be removed if we compensate by a sign
\begin{equation*}
  (-1)^{\dim \kd_{b_k,-} + \dim \R^{n+d_A} + \dim \cd_{b_k,+} + \dim \cd_{b_k,-}}=(-1)^{1 + n + d_A + |b_k| +n+1+d_A+|b_k|} = 1.
\end{equation*}
Here we have used \ref{c3} repeatedly.

Next, notice that by \ref{c3}, \eqref{eq:dim1} and by the assumption that $u_2$ is rigid,  we have 
\begin{equation}\label{eq:forenk}
  \dim \cd_{a,+} +  \dim \left(\hbp_{j=1}^{r}\cd_{b_{j},-}\right) \equiv  |a|+1 +n+d_A + \sum_{j=1}^{r}|b_j| \equiv n+d_A \pmod{2}.
\end{equation}
Thus, by moving $\R_t^A$ in the sequence \eqref{eq:4.9.1} to the
position right under $\R_t^B$, in the same time as we move the
$\R_t$-factor in the third column to the position right under $\cd_B$, we get a sign 
$(-1)^{\sigma_1}$,
\begin{align*}
  \sigma_1 &= 1\cdot \dim\left(\hbp_{j=1}^{r}\kd_{b_{j},-}\right) + 1 \cdot\left( \dim\left(\hbp_{j=1}^{r}\cd_{b_{j},-}\right) + \dim  \cd_{a,+} \right) \\  
  &\equiv r + n +d_A \pmod{2}.
\end{align*}
 Again we have used \ref{c3}. 

Then we move $\cd_A$, which has dimension $m-1$ modulo 2, to the place just below $\cd_B \oplus \R_t$. Using \eqref{eq:forenk} we see that this costs 
\begin{equation*}
 (-1)^{(m-1)(n+d_A +n+d_A )} =1.
\end{equation*}
Then we change places of $ \hbp_{j=1}^{m-1} \kd_{f_{j},-}$ and $\kd_{b_{k},-}$, which costs $(-1)^{\sigma_2}$,
\begin{align*}
 \sigma_2 &= \left(\dim \left(\hbp_{j=1}^{m-1} \kd_{f_{j},-}\right) + \dim \kd_{b_{k},-}\right) \cdot \dim \left(\hbp_{j=1}^{k-1} \kd_{b_{j},-}\right)\\ 
 &\quad +  \dim \left(\hbp_{j=1}^{m-1} \kd_{f_{j},-}\right) \cdot \dim \kd_{b_{k},-}\\
 &\equiv m(k-1) + m-1 \equiv mk +1 \pmod{2},
\end{align*}
and then we do the similar thing for the cokernels. Using that 
\begin{equation*}
 \dim \left(\hbp_{j=1}^{m-1} \cd_{f_{j},-}\right) \equiv  \sum_{j=1}^{m-1}|f_j| \equiv |b_k| +1 \pmod{2},
\end{equation*}
again by \ref{c3}, \eqref{eq:dim1} and since $u_1$ is rigid, we see that this permutation  costs $(-1)^{\sigma_3}$,
\begin{align*}
 \sigma_3 &= \left(\dim \left(\hbp_{j=1}^{m-1} \cd_{f_{j},-}\right) + \dim \cd_{b_{k},-}\right) \cdot \\ &\quad \left(\dim \cd_{b_{k},+} + \dim \R^{n+d_A}   + \dim \left(\hbp_{j=1}^{k-1} \cd_{b_{j},-}\right)\right)\\ 
 &\quad +  \dim \left(\hbp_{j=1}^{m-1} \cd_{f_{j},-}\right) \cdot \dim \cd_{b_{k},-}\\
 &\equiv(|b_k| +1 + |b_k|)(|b_k| +1 + n +d_A  + n + d_A + \sum_{i=1}^{k-1}|b_i|) + (|b_k| +1)|b_k|\\
 &\equiv \sum_{i=1}^{k}|b_i| +1 \pmod{2}.
\end{align*}
%

Thus we get a total sign $(-1)^{\sigma}$,
\begin{equation*}
 \sigma = \sigma_1 + \sigma_2 + \sigma_3 \equiv r +n +d_A+ mk + |b_k| + \sum_{i=1}^{k-1}|b_i|  \pmod{2}. 
\end{equation*}



Now, notice that the second gluing sequence does not induce the
canonical orientation of $\det \dbar_{\hat A \# \hat B}$, but
$(-1)^{|b_k|+ n+d_A+1}$ times the canonical orientation. This is because of Definition \ref{def:capor}, which implies that 
\begin{equation*}
0 \to
 \Ker \dbar_{s,b_{k}} \to 
 \begin{bmatrix}
  \Ker \dbar_{s,b_{k},+} \\
 \R_t\\
 \Ker \dbar_{s,b_{k},-}
 \end{bmatrix}
 \to
 \begin{bmatrix}
  \coker \dbar_{s,b_{k},+}\\
  \coker \dbar_{s,b_{k},-}
 \end{bmatrix}
 \to 
 \coker \dbar_{s,b_{k}} \to 0
\end{equation*}
induces the canonical orientation times $(-1)^{|b_k|+n+d_A+1}$ on $\det\dbar_{s,b_{k}}$, given that the capping operators are given their capping orientations. (In the Lagrangian setting, the similar conclusion follows from Lemma \ref{lma:lma1}.)

Hence the difference in orientation of $\det \dbar_{\hat A \# \hat B}$ given by \eqref{eq:4.9.1} and \eqref{eq:4.9.2}, assuming the capping operators are given their capping orientations, is given by $(-1)^{\tilde \sigma}$,
\begin{equation}\label{eq:sis}
 \tilde \sigma = \sigma + |b_k|+n+d_A+1 \equiv mk + \sum_{i=1}^{k-1}|b_i| +r +1 \pmod{2}.
\end{equation}

Now recall form Definition \ref{def:capping} and Remark \ref{rmk:forenkla} that we would like to express the capping orientation of $\det \dbar_{s,A\#B}$ as a wedge $\partial_t \wedge \det \dbar_{A\#B}=\partial_t \wedge \bigwedge^{\max}\cd_{A\#B}$. From \eqref{eq:cokersign} and \eqref{eq:sis} it follows that 
\begin{equation*}
 \partial_t \wedge \bigwedge^{\max}\cd_{A\#B} = (-1)^{\tilde \sigma + \dim \cd_A}\partial_t  \wedge \bigwedge^{\max}\cd_{B} \wedge \bigwedge^{\max}\cd_{A}
\end{equation*}
given that $\cd_A, \cd_B$ and $\cd_{A\#B}$ are given their capping orientations. Using Remark \ref{rmk:forenkla} again, noting that 
\begin{equation*}
 \tilde \sigma + \dim \cd_A \equiv (\dim \cd_{A}-1)\cdot k + \sum_{i=1}^{k-1}|b_i| + \dim \cd_{A} + \dim \cd_{B} +1
\end{equation*}
we deduce that \eqref{eq:holm1} holds. 

\end{proof}

\begin{proof}[Proof of Theorem \ref{thm:georgios}]
 The first statement follows by [\cite{Georgios}, Theorem 2.1]
 together with Definition \ref{def:capping}. The second statement follows from Lemma \ref{lma:lma3}.
\end{proof}

\section{DGA-morphisms induced by exact Lagrangian cobordisms}\label{sec:cob-pf}
Let $L$ be an exact Lagrangian cobordism with cylindrical Legendrian ends
$\Lambda_\pm$, and assume that we are given admissible systems $\Sy_\pm$ of
capping operators for $\Lambda_\pm$. Let $\partial_\pm =
\partial_{\Sy_\pm}$ to simplify notation.
Moreover, let $\Sy$ denote the induced system of $L$, and let $\Phi_L:
(\A(\Lambda_+), \partial_+, \Z) \to (\A(\Lambda_-), \partial_-, \Z)$
be defined by \eqref{eq:phidisk} -- \eqref{eq:ex2}, where now $|\M_L(a,{\bf b})|$ denotes
the algebraic count induced by the capping orientation of $\M_L$
corresponding to the system $\Sy$. When we want to emphasize that
$\Phi_L$ is given with respect to the system $\Sy$ we write
$\Phi_{L,\Sy}$. 

In this section we prove that this setup gives the statements in
Theorem \ref{thm:kalman}, Theorem \ref{thm:functorial} and Theorem \ref{thm:differentcappings}.

\subsection{Proof of Theorem \ref{thm:kalman} and Theorem
  \ref{thm:differentcappings}}\label{sec:kalman-pf}
  
To prove Theorem \ref{thm:kalman}, we must show that 
\begin{equation}\label{eq:samma}
  \Phi_L \circ \partial_+ = \partial_- \circ \Phi_L.
\end{equation}
This uses similar techniques as the proof of Theorem
\ref{thm:georgios}. That is, we will use compactness results from
[\cite{compact}, Section 11.3] to pair up 2-level buildings that emanate from the
left hand side of \eqref{eq:samma} with
2-level buildings that emanate from the the right hand side. We perform
this pairing in a way so that each
pair can be interpreted as the boundary
of a 1-dimensional moduli space. Then we compute the difference
in orientations induced by gluings of such buildings, and argue that all
the signs cancel.

\begin{proof}[Proof of Theorem \ref{thm:kalman}.]
 Assume that $L\subset \R \times J^1(M)$ is an exact Lagrangian
 cobordism with cylindrical Legendrian ends $\Lambda_\pm$. Let $a$ be
 a Reeb chord of $\Lambda_+$, and let $u_0 \in \M_{s,\Lambda_+}(a, b_1\cdots
 b_m)$, $v_0 \in \M_L(a,c_1\cdots c_l)$ be rigid disks. Also pick
 rigid disks $v_i \in \M_{L}(b_i, b^i_1\cdots b^i_{m_i})$ for
 $i=1,\dotsc,m$, and $u_j \in \M_{s,\Lambda_-}(c_j,f_1\cdots f_k)$ for
 some $j \in\{1,\dotsc,l\}$, such that we can interpret $u_0
 \#_{i=1}^m v_i$ and $v_0 \# u_j$ as broken boundary components of a
 1-dimensional component $\M\subset \M_L$. In particular, we assume that
 \begin{equation*}
   b^1_1\cdots b^1_{m_1} b_1^2\cdots b^2_{m_2}\cdots b^m_1 \cdots b_{m_m}^m = c_1\cdots
   c_{j-1} f_1\cdots f_k  c_{j+1} \cdots c_l.
 \end{equation*}
  Let $\dbar_T$ denote the dbar-problem we get when we glue the fully
  capped $\dbar_{\hat u_0}$-problem to the fully capped $\dbar_{\hat
    v_i}$-problems, $i=1,\dotsc,m$. This gives the same problem as if
  we first glue $u_0$ to $v_1,\dotsc, v_m$, then glue the capping
  operators to the punctures of this new problem, and then glue
  this to the glued capping disks at $b_1,\dotsc,b_m$.

We also have to analyze the situation at the other boundary component of
$\M$. To that end, let $\dbar_{\tilde T}$ denote the dbar-problem that we get
if we glue the fully capped $\dbar_{\hat v_0}$-problem to the fully
capped $\dbar_{\hat u_j}$-problem. This is the same dbar-problem as we get
if we first glue $v_0$ to $u_j$, then glue the capping
  operators to the punctures of this new problem, and then glue this fully capped problem to the glued capping
disk at $c_j$.

Now, following the arguments in the proof of Lemma \ref{lma:lma3}, we first calculate the difference in
orientation of $\det \dbar_T$, induced by the 2 different gluings
described above, and then we do the same thing for $\det \dbar_{\tilde T}$. If
these differences cancel when we take the orientation of the
space of conformal variations associated to $\M$ into account, then we
get that $\Phi_L \circ \partial_+ = \partial_- \circ \Phi_L$.

From the gluings for $\dbar_T$ we get the following two exact
sequences. To simplify notation, we assume that $u_i$, $v_j$, $i=0,\dotsc,l$, $j=0,\dotsc,m$,
all have at least 2 negative punctures.

The first exact sequence has the form
\begin{equation}\label{eq:glu1}
  \kd_T \to
  \begin{bmatrix}
    \begin{pmatrix}
      \kd_{u_0}\\
      \kd_{a,+}\\
      \kd_{b_m,-}\\
      \vdots \\
      \kd_{b_1,-}
    \end{pmatrix}\\
    \begin{pmatrix}
      \kd_{v_m}\\
      \kd_{b_m,+}\\
      \Ker \capp(v_m,-)
    \end{pmatrix}\\
    \vdots\\
    \begin{pmatrix}
      \kd_{v_1}\\
      \kd_{b_1,+}\\
      \Ker \capp(v_1,-)
    \end{pmatrix}
  \end{bmatrix}
  \to
    \begin{bmatrix}
    \begin{pmatrix}
      \cd_{u_0}\\
      \cd_{a,+}\\
      \cd_{b_m,-}\\
      \vdots \\
      \cd_{b_1,-}
    \end{pmatrix}\\
    \R_{t,m}\\
    \R^{n+d_A}\\
    \begin{pmatrix}
      \cd_{v_m}\\
      \cd_{b_m,+}\\
      \Coker \capp(v_m,-)
    \end{pmatrix}\\
    \vdots\\
        \R_{t,1}\\
    \R^{n+d_A}\\
    \begin{pmatrix}
      \cd_{v_1}\\
      \cd_{b_1,+}\\
      \Coker \capp(v_1,-)
    \end{pmatrix}
  \end{bmatrix}
  \to
  \cd_T,
\end{equation}
where $\Ker \capp(v_i,-) = \hat \oplus_{j=1}^{m_i} \kd_{b^i_j,-}$, and
similar for the cokernel, and where we use the notation $\R_{t,i}$ to
indicate the cokernel which is born when we glue the $\hat v_i$-disk to
the bigger problem. 

Similarly, the second gluing sequence has the form
\begin{equation}
  \label{eq:glu2}
  \kd_T \to
  \begin{bmatrix}
    \begin{pmatrix}
      \kd_{u_0}\\
      \kd_{v_m}\\
      \vdots\\
      \kd_{v_1}\\
      \kd_{a,+}\\
      \Ker \capp(v_{m},-)\\
      \vdots\\
      \Ker \capp(v_1,-)
    \end{pmatrix}\\
    \begin{pmatrix}
      \kd_{b_m,+}\\
      \R_t\\
      \kd_{b_m,-}
    \end{pmatrix}\\
    \vdots\\
    \begin{pmatrix}
      \kd_{b_1,+}\\
      \R_t\\
      \kd_{b_1,-}
    \end{pmatrix}
  \end{bmatrix}
  \to
    \begin{bmatrix}
    \begin{pmatrix}
      \cd_{u_0}\\
      \R_{t,m}\\
      \cd_{v_m}\\
      \vdots\\
      \R_{t,1}\\
      \cd_{v_1}\\
      \cd_{a,+}\\
      \Coker \capp(v_{m},-)\\
      \vdots\\
      \Coker \capp(v_1,-)
    \end{pmatrix}\\
    \R_t\\
    \R^{n+d_A}\\
    \begin{pmatrix}
      \cd_{b_m,+}\\
      \cd_{b_m,-}
    \end{pmatrix}\\
    \vdots\\
    \R_t\\
    \R^{n+d_A}\\
    \begin{pmatrix}
      \cd_{b_1,+}\\
      \cd_{b_1,-}
    \end{pmatrix}
  \end{bmatrix}
  \to \cd_T.
\end{equation}
Again we have omitted the maps $0 \to \kd_T$ and $\cd_T \to 0$ in \eqref{eq:glu1} and \eqref{eq:glu2} to fit the sequences within page margins.

Perform rearrangements similar to those in the proof of Lemma \ref{lma:lma3}. That is, first notice  
 that by construction of the gluing map, each $\R_t$-summand in
the second column in \eqref{eq:glu2} is mapped by the identity to
the corresponding $\R_t$-summand in the third column. That is, the
$\R_t$-summand in the kernel of the glued capping disk at $b_i$ is mapped by
the identity to the $\R_t$-summand that corresponds to the cokernel
which is born when we glue the glued capping disk at $b_i$ to the larger
problem.  Thus,
using that we require the capping operators to satisfy \ref{c3} we
can remove all these occurrences of $\R_t$ from the sequence
\eqref{eq:glu2} without changing any induced orientation. This is done
by moving all these space to the bottom of column 2 and 3,
respectively, and recalling our orientation convention \eqref{eq:o2}.

Next we rearrange vector  spaces in the sequence \eqref{eq:glu1}, and we start with letting $\kd_{b_i,-}$ switch position with $\Ker \capp(v_i,-)$ for
$i=m,\dotsc,1$. By \ref{c3} we have that $\dim \Ker \capp(v_i,-) = m_i$, so that we get that this rearrangement costs $(-1)^{\sigma_1}$,
\begin{align*}
  \sigma_1 &= (m_m +1)\cdot(m-1) +m_m + \dotsc + (m_1+1)\cdot(m-1)+
             m_1\\
  &\equiv m \cdot \sum_{i=1}^m m_i + m(m+1) \equiv m \cdot \sum_{i=1}^m m_i \pmod{2}.
\end{align*}

Next we move $\R_{t,m}$ and  $\cd_{v_m}$  to the place just
below $\cd_{u_0}$. Using a calculation similar to \eqref{eq:forenk} we see that this costs $(-1)^\nu$,
\begin{align}\nonumber
  \nu &= \dim \R_{t,m}\cdot \dim \R^{n+d_A} + ( \dim \R_{t,m} + \dim \cd_{v_m})\\ \nonumber
  &\quad \cdot\left(\dim \R^{n+d_A} + \dim \left(\hbp_{i=1}^{m} \cd_{b_{i},-}\right) + \dim \cd_{a,+}\right)\\  \label{eq:sehar}
  &\equiv n + d_A + (1+m_m)\cdot(n+d_A+n+d_A) \equiv n +d_A \pmod{2}.
\end{align}
Perform the similar move with the other such spaces, so that we end up with
\begin{equation}
  \label{eq:cokernelar}
  \cd_{u_0} \oplus \R_{t,m} \oplus \cd_{v_m} \oplus \dotsc \oplus
  \R_{t,1} \oplus \cd_{v_1}
\end{equation}
at the top of the third column in the sequence \eqref{eq:glu1}. These
moves have a total cost $(-1)^{\sigma_2}$,
\begin{equation*}
  \sigma_2 = m \cdot(n+d_A)+ \sum_{i=1}^m(m-i)(m_i+1) \equiv m \cdot(n+d_A) + m \cdot \sum_{i=1}^m m_i + m + \sum_{i=1}^mi\cdot(m_i+1),
\end{equation*}
modulo 2.

Indeed, to move $\R_{t,i}$ and  $\cd_{v_{i}}$ costs $(-1)^{n+d_A+(m-i)(m_i+1)}$. Here $n +d_A$ is calculated just as in \eqref{eq:sehar}, and $(m-i)(m_i+1)$ comes from moving $\R_{t,i}\oplus \cd_{v_{i}}$ over $m-i$ copies of $\R^{n+d_A}$ and over the cokernels of the capping operators associated to $v_m,v_{m-1},\dotsc v_{i+1}$. Here we have used an equation similar to \eqref{eq:forenk}, namely that by  \ref{c3}, \eqref{eq:dim2} and again \ref{c3} we have that 
\begin{equation}\label{eq:forkob}
\dim \cd_{b_i,-} \equiv |b_i| \equiv \sum_{j=1}^{m_i}|b_j^i| \equiv \dim \Coker
\capp(v_i,-) \pmod{2}.     
\end{equation}

In the last step we switch position of $\cd_{b_i,-}$ and $\Coker
\capp(v_i,-)$ for $i=m,\dotsc,1$. Using \eqref{eq:forkob}, we see that this move costs $(-1)^{\sigma_3}$,
\begin{align*}
  \sigma_3 &=  \dim \left(\cd_{b_i,-} \oplus \Coker \capp(v_i,-) \right) \cdot E +  \dim \cd_{b_i,-} \cdot \dim \Coker \capp(v_i,-) \\ 
  &\equiv  0 \cdot E + \sum_{i=1}^m |b_i|^2 \equiv \sum_{i=1}^m |b_i|.
\end{align*}
Here $E$ is an expression involving dimensions of cokernels of capping operators and of a number of copies of $\R^{n+d_A}$. 

Thus, in total we get a permutation sign $(-1)^\sigma$,
\begin{align} \nonumber
  \sigma &= \sigma_1 + \sigma_2+ \sigma_3 \equiv m\cdot(n+d_A) + m +  \sum_{i=1}^mi\cdot(m_i+1) +  \sum_{i=1}^m
    |b_i|\\ 
         &\equiv \sum_{i=1}^mi\cdot(m_i+1) + \sum_{i=1}^m(|b_i|+n+d_A+
           1) \pmod{2}. \label{eq:sigsig}
\end{align}
(Recall that we assume that $m_i>1$ so that $\kd_{v_i}=0$ for $i=1,\dotsc,m$.)

Now we do the similar thing for the $\dbar_{\tilde T}$-problem, which gives a
total permutation sign $(-1)^{\tilde \sigma}$, where
\begin{equation}\label{eq:tildesigma}
  \tilde \sigma = (|c_j| + n +d_A+1) + j\cdot(k+1) + k + l + \sum_{i=1}^{j-1}|c_i|.
\end{equation}
Indeed, in this case we get the following two different gluing sequences 
\begin{equation}\label{eq:glu21}
  \kd_{\tilde T} \to
  \begin{bmatrix}
    \begin{pmatrix}
      \kd_{v_0}\\
      \kd_{a,+}\\
      \kd_{c_l,-}\\
      \vdots \\
      \kd_{c_j,-}\\
      \vdots \\
      \kd_{c_1,-}
    \end{pmatrix}\\
    \begin{pmatrix}
      \kd_{u_j}\\
      \kd_{c_j,+}\\
      \Ker \capp(u_j,-)
    \end{pmatrix}
  \end{bmatrix}
  \to
    \begin{bmatrix}
    \begin{pmatrix}
      \cd_{v_0}\\
      \cd_{a,+}\\
      \cd_{c_l,-}\\
      \vdots \\
      \cd_{c_j,-}\\
      \vdots \\
      \cd_{c_1,-}
    \end{pmatrix}\\
    \R_{t}\\
    \R^{n+d_A}\\
    \begin{pmatrix}
      \cd_{u_j}\\
      \cd_{c_j,+}\\
      \Coker \capp(u_j,-)
    \end{pmatrix}
    \end{bmatrix}
  \to
  \cd_{\tilde T},
\end{equation}
and
\begin{equation}
  \label{eq:glu22}
  \kd_{\tilde T} \to
  \begin{bmatrix}
    \begin{pmatrix}
      \kd_{v_0}\\
      \kd_{u_j}\\
      \kd_{a,+}\\
      \kd_{c_l,-}\\
      \vdots\\
      \kd_{c_{j+1},-}\\
      \Ker \capp(u_{j},-)\\
      \kd_{c_{j-1},-}\\
      \vdots\\
      \kd_{c_1,-}
      \end{pmatrix}\\
    \begin{pmatrix}
      \kd_{c_j,+}\\
      \R_t\\
      \kd_{c_j,-}
    \end{pmatrix}
  \end{bmatrix}
  \to
    \begin{bmatrix}
    \begin{pmatrix}
      \cd_{v_0}\\
      \R_{t}\\
      \cd_{u_j}\\
      \cd_{a,+}\\
      \cd_{c_l,-}\\
      \vdots\\
      \cd_{c_{j+1},-}\\
      \Coker \capp(u_{j},-)\\
      \cd_{c_{j-1},-}\\
      \vdots\\
      \cd_{c_1,-}
    \end{pmatrix}\\
    \R_t\\
    \R^{n+d_A}\\
    \begin{pmatrix}
      \cd_{c_j,+}\\
      \cd_{c_j,-}
    \end{pmatrix}
  \end{bmatrix}
  \to \cd_{\tilde T}.
\end{equation}
Here $\Ker \capp(u_j,-) = \hat \oplus_{i=1}^{k} \kd_{f_i,-}$, and
similar for the cokernel. Again we have omitted the maps $0 \to \kd_{\tilde T}$ and $\cd_{\tilde T} \to 0$  to fit the sequences within page margins.  

To compare these two sequences we do the usual rearrangements. First consider the sequence \eqref{eq:glu22}, and move the $\R_t$-summand in the second column and the lowest $\R_t$-summand in the third column to the bottom. This costs nothing, just like in the case for the $\dbar_T$-problem.

Next consider the sequence \eqref{eq:glu21}. Begin by moving $\kd_{u_j}$ to the place just below $\kd_{v_0}$. Again we assume that both disks have more than one negative puncture, so that $\kd_{u_j}$ is $1$-dimensional. Hence this costs $(-1)^{\sigma_0}$,
\begin{equation*}
 \sigma_0 = l.
\end{equation*}
Here we have as usual used \ref{c3}. 

Then switch places of $\kd_{c_j,-}$ and $\Ker \capp(u_j,-)$, which costs  $(-1)^{\sigma_1}$,
\begin{equation*}
 \sigma_1 = kj + j + 1.
\end{equation*}
This is calculated similar to how $\sigma_1$ was calculated in the $\dbar_T$-case.

Next we move $\R_t \oplus \cd_{u_j}$ to the place just below $\cd_{v_0}$. This costs $(-1)^{\sigma_2}$,
\begin{equation*}
 \sigma_2 = n + d_A + k+1,
\end{equation*}
and is calculated similar to $\sigma_2$ in the $\dbar_T$-case. 

Finally we switch position of $\cd_{c_j,-}$ and $\Coker \capp (u_j,-)$, with a cost of  $(-1)^{\sigma_3}$,
\begin{equation*}
 \sigma_3 = 1+\sum_{i=1}^{j} |c_i|. 
\end{equation*}
The reason for the extra $1$ here compared with the $\dbar_T$-calculation of $\sigma_3$ is that we now have $\dim \cd_{c_j,-} \equiv |c_j| \equiv \sum_{i=1}^{k}|f_i|+1\equiv \dim \Coker
\capp(u_j,-)+1$, modulo $2$. 

We see that all this sum up to a total sign $(-1)^{\tilde \sigma}$, given in \eqref{eq:tildesigma}.

Now it remains to calculate the contribution from the orientation of
the space of conformal variations. To that end, let $\mu_i$ be the capping sign of $v_i$, $i=0,\dotsc, m$, and $\epsilon_i$ the capping sign of $u_i$, $i=0,j$. To finish the proof of the theorem we copy the arguments from the proof of Lemma  \ref{lma:lma3}. That is, we add together the signs we get from the maps in the commutative diagrams corresponding to the diagram \eqref{eq:twostar}. In the case of the  $\dbar_{u_j\# v_0 }$-problem this diagram now reads   
\begin{equation}\label{eq:twostartilde}
\xymatrix{
 T\Co_{v_0}\oplus T\Co_{u_j} \oplus \R \ar[d] \ar[r] & T\Co_{u_j\#v_0} \ar[d] \\
 \cd_{v_0} \oplus \cd_{u_j} \ar[r] & \cd_{u_j\#v_0}.
 }
\end{equation}
We assume that the vector spaces are oriented as usual; the cokernels are given their capping orientations, the spaces of conformal variations are given their fixed orientation from Section \ref{sec:confvar}, and the $\R$-summand is given orientation as the outward normal from the space $T\Co_{u_j \# v_0}$.

It follows that the left vertical map restricted to $T\Co_{v_0}\oplus T\Co_{u_j}$ is an isomorphism of sign 
\begin{equation*}
 \mu_0\epsilon_j,
\end{equation*}
and from Lemma \ref{lma:confvar} we get that the upper horizontal map is an isomorphism of sign
\begin{equation*}
 (-1)^{(k-1)j +1}.
\end{equation*}

It remains to compute the sign of 
 the lower horizontal map. We claim that this is an isomorphism of sign $(-1)^{\sigma_{\tilde T}}$
\begin{align*}
 \sigma_{\tilde T} = \tilde \sigma + \tilde \sigma_0+ k +(|c_j| + n +d_A+1). 
\end{align*}
Here $\tilde \sigma_0$ is the sign that corresponds to the sign $\sigma_0$ in \eqref{eq:cokersign} and will be computed below, $k$ comes from Definition \ref{def:capping} applied to the capping orientation of $u_j$, and $(|c_j| + n +d_A+1)$ comes from the fact that the orientation of the glued capped disk at $c_j$ is given by $(-1)^{(|c_j| + n +d_A+1)}$ times the canonical orientation, see Definition \ref{def:capor}.

Now we argue that  $\tilde \sigma_0$  is given by
\begin{equation*}
 \tilde \sigma_0 = \dim \cd_{u_j} = k.
\end{equation*}
Indeed, the gluing sequence that corresponds to \eqref{eq:ny1} now reads
\begin{equation*}
0 \to
   \kd_{u_j \# v_0}
\to 
\begin{bmatrix}
  \kd_{v_0} \\
  \kd_{u_j} \\
  \end{bmatrix}
\to
\begin{bmatrix}
  \cd_{v_0} \\
  \R_t \\
 \cd_{u_j} \\
\end{bmatrix}
\to
\cd_{u_j \# v_0}
\to 0,
\end{equation*}
which moreover can be simplified to 
\begin{equation}\label{eq:simpa}
0 \to
   \R_t
\to 
\begin{bmatrix}
  0\\
  \R_t 
  \end{bmatrix}
\to
\begin{bmatrix}
  \cd_{v_0} \\
  \R_t \\
 \cd_{u_j} \\
\end{bmatrix}
\to
\cd_{u_j \# v_0}
\to 0,
\end{equation}
and which in turn can be reduced to 
\begin{equation*}
0 \to
   0
\to 
  0\\
\to
\begin{bmatrix}
  \cd_{v_0} \\
 \cd_{u_j} \\
\end{bmatrix}
\to
\cd_{u_j \# v_0}
\to 0,
\end{equation*}
with a cost of $(-1)^{\sigma_0}$, using that the leftmost non-trivial map in \eqref{eq:simpa} can be seen as given by $t \mapsto (0,t)$, and the map between the third and fourth column can be seen as given by $s \mapsto (0,s,0)$.

So what we get is that 
\begin{align*}
 \sigma_{\tilde T} = \tilde \sigma + \tilde \sigma_0+ k +(|c_j| + n +d_A+1) \equiv  j\cdot(k+1) + l + k + \sum_{i=1}^{j-1}|c_i| \pmod{2}.
\end{align*}
Hence it follows that the orientation of the boundary component of $\M$ that corresponds to the $\dbar_{u_j\# v_0 }$-problem is given by
\begin{equation}\label{eq:signtilde}
 (-1)^{\sigma_{\tilde T }+ (k-1)j +1}\mu_0\epsilon_j = (-1)^{ 1+ l + k + \sum_{i=1}^{j-1}|c_i|}\mu_0\epsilon_j
\end{equation}
times the outward normal.

The situation for the $\dbar_T$-problem is slightly more involved. To simplify notation let $\dbar_h = \dbar_{v_1\#\dotsm \# v_m \#u_0}$. 

To compute the sign that corresponds to $\sigma_0$ in \eqref{eq:cokersign}, one should consider the iterated gluing sequence 
\begin{equation*}
0 \to
   \kd_{h}
\to 
\begin{bmatrix}
  \kd_{u_0} \\
  \kd_{v_m} \\
  \vdots\\
  \kd_{v_1} \\
  \end{bmatrix}
\to
\begin{bmatrix}
  \cd_{u_0} \\
  \R_{t,m}\\
  \cd_{v_m} \\
  \vdots\\
  \R_{t,1}\\
  \cd_{v_1} \\
  \end{bmatrix}
\to
\cd_h
\to 0,
\end{equation*}
which simplifies to 
\begin{equation}\label{eq:simpat}
0 \to
   0
\to 
\begin{bmatrix}
  \R_t \\
  0 \\
  \vdots\\
  0 \\
  \end{bmatrix}
\to
\begin{bmatrix}
  \cd_{u_0} \\
  \R_{t,m}\\
  \cd_{v_m} \\
  \vdots\\
  \R_{t,1}\\
  \cd_{v_1} \\
  \end{bmatrix}
\to
\cd_h
\to 0,
\end{equation}
and which we can reduce to 
\begin{equation*}
0 \to
   0
\to 
\begin{bmatrix}
  0 \\
  0 \\
  \vdots\\
  0 \\
  \end{bmatrix}
\to
\begin{bmatrix}
  \cd_{u_0} \\
  \cd_{v_m} \\
  \R_{t,m}\\
  \cd_{v_{m-1}} \\
  \vdots\\
  \R_{t,2}\\
  \cd_{v_1} \\
  \end{bmatrix}
\to
\cd_h
\to 0,
\end{equation*}
if we compensate by the sign $(-1)^{\nu_0}$,
\begin{equation*}
 \nu_0 = 1+\sum_{i=1}^m m_i.
\end{equation*}
Here $\sum_{i=1}^m m_i$ comes from moving the $\R_{t,i}$-summand over $\cd_{v_i}$ for $i=1,\dotsc, m$, and the $1$ comes from removing the $\R_t$ and the $\R_{t,1}$ summand together with the fact that the leftmost non-trivial map in \eqref{eq:simpat} can now be seen as given by $t \mapsto (0,-t,0,\dotsc,-t,0)$.   

To compute the orientation of the boundary component of $\M$ that corresponds to the $\dbar_h$-problem we consider the commuting diagram 
\begin{equation}\label{eq:trestar}
 \xymatrix{
 T\Co_{u_0} \oplus T\Co_{v_m} \oplus \R \oplus  \dotsc \oplus T\Co_{v_{1}} \oplus \R \ar[d] \ar[r] &  T\Co_{w\#u_0} \ar[d] \\
 \cd_{u_0} \oplus \cd_{v_m} \oplus \R \oplus \dotsc \oplus \cd_{v_2} \oplus \R \oplus \cd_{v_{1}} \ar[r] &  \cd_{w \#u_0}
 }
\end{equation}
Here $w =v_1\#\dotsm \# v_m$. 
We assume that the vector spaces are oriented as usual; the cokernels are given their capping orientations, the spaces of conformal variations are given their fixed orientation from Section \ref{sec:confvar}, and the $\R$-summand to the right of $T\Co_{v_{i}}$ is given orientation as the outward normal from the space $T\Co_{v_i\#\dotsm \# v_m \#u_0}$ for $i=1, \dotsc, m$. The $\R$-summands in the lower row are given orientations from $\R_t$.

The left vertical map is given by 
\begin{equation*}
(x,y_m,t_m,\dotsc,y_1, t_1) \mapsto (\bar\Psi_0(x),  \bar\Psi_m(y_m),t_m, \dotsc, t_{2},  \bar\Psi_1(y_1)).
\end{equation*}
Here $\bar\Psi_i$ is the map \eqref{eq:psibar} associated to $v_i$, $i=1, \dotsc,m$, and to $u_0$ for $i=0$.  Using the fact that the gluing cokernel which occurs in the $\R_t$-direction can be interpreted as the outward normal of the space of conformal variations corresponding to the glued problem,
we get that this map restricted to the complement of the last $\R$-summand is an isomorphism of sign 
\begin{equation*}
 \epsilon_0\mu_1\dotsm\mu_m.
\end{equation*}

The upper horizontal map is an isomorphism of sign $(-1)^{\nu_2}$,
\begin{equation*}
 \nu_2 = ((m_m-1) m +1)+((m_{m+1}-1) (m-1) +1) +\dotsc +
             ((m_1-1) 1 + 1 ) \equiv m + \sum_{i=1}^m i\cdot(m_i+1),
\end{equation*}
modulo 2. To see this, we inductively note that by Lemma \ref{lma:confvar} we have that the inclusion of $T\Co_{v_{i+1}\#\dotsm \# v_m \#u_0} \oplus T\Co_{v_i} \oplus \R $ into $T\Co_{v_i\#\dotsm \# v_m \#u_0}$ is an isomorphism of sign $(-1)^{(m_i-1)i + 1}$.

Finally, we have  that the lower horizontal map is an isomorphism of sign $(-1)^{\sigma_T}$, 
\begin{equation*}
 \sigma_T = \sigma + \nu_0 + m + \sum_{i=1}^m(|b_i|+n+d_A+
           1) \equiv \sum_{i=1}^mi\cdot(m_i+1) +1+\sum_{i=1}^m m_i + m.
\end{equation*}
Here $m$ comes from Definition \ref{def:capping} applied to the capping orientation of $u_0$, and $\sum_{i=1}^m(|b_i|+n+d_A+1)$ comes from the fact that the orientation of the glued capped disk at $b_i$ is given by $(-1)^{(|b_i| + n +d_A+1)}$ times the canonical orientation for $i = 1, \dotsc,m$. Recall that $\sigma$ was computed in \eqref{eq:sigsig}.

By the commutativity of the diagram \eqref{eq:trestar}, 
and since the right vertical map is the one we use to get the orientation of the one-dimensional moduli space, it follows that the orientation of the boundary component of $\M$ that corresponds to the $\dbar_h$-problem is given by
\begin{align*}\label{eq:signt}
 (-1)^{\sigma_{T}+ \nu_2}\epsilon_0\mu_1\dotsm\mu_m & =
 (-1)^{ \sum_{i=1}^m i\cdot(m_i+1) +1+\sum_{i=1}^m m_i + m + m + \sum_{i=1}^m i\cdot(m_i+1)} \epsilon_0\mu_1\dotsm\mu_m\\
 & = (-1)^{l+k } \epsilon_0\mu_1\dotsm\mu_m
\end{align*}
times the outward normal. Here we have used that $\sum_{i=1}^m m_i = l+k -1$. 

Since the orientation of $\M$ at the boundary component that corresponds to the $\dbar_h$-problem is opposite to the orientation at the boundary component of $\M$ that corresponds to the $\dbar_{u_j\# v_0 }$-problem, it follows that 
\begin{equation*}
 (-1)^{l+k } \epsilon_0\mu_1\dotsm\mu_m = - (-1)^{ 1+ l + k + \sum_{i=1}^{j-1}|c_i|}\mu_0\epsilon_j.
\end{equation*}
Since $(-1)^{ \sum_{i=1}^{j-1}|c_i|}$ comes from the Leibniz rule it follows that $ \Phi_L \circ \partial_+ = \partial_- \circ \Phi_L$, and we have proved the theorem.

\end{proof}

\begin{proof}[Proof of Theorem \ref{thm:differentcappings}]
  From Remark \ref{rmk:notv}, Lemma \ref{lma:lma3} and the proof of invariance in [\cite{orientbok}, Section 4.3] we get that there exists an admissible system $\Sy$
  of capping operators satisfying the first statement in the
  theorem. Now, if $\Sy'$ is another admissible system of capping
  operators, then lift both systems to the
  symplectization, as described in Section \ref{sec:orientcap}.

   In this setting, the result about the DGA-isomorphism follows by exactly the same arguments as above, with $L = \R
  \times \Lambda$ and where $\Lambda_+ = \Lambda$ is equipped with the
  system $\mathcal{S}$ and $\Lambda_- = \Lambda$ is equipped with the
  system $\mathcal{S'}$. The DGA-isomorphism is then given by
  \begin{equation*}
    \Phi_{\mathcal{S}, \mathcal{S'}}(a) = \sigma(u_a)a,
  \end{equation*}
  where $a$ is a Reeb chord of $\Lambda$, $u_a = \R \times a$, and
  $\sigma(u_a)$ is the capping sign of $u_a$ with respect to the induced
  system of $L$. 
  
Thus, any admissible system gives rise to a DGA which is isomorphic to
the system in \cite{orientbok}, and the result follows. 
  
\end{proof}
\subsection{Proof of Theorem \ref{thm:functorial}}
In this section we prove that $\Phi$ satisfies the functorial
properties stated in Theorem \ref{thm:functorial}. To that end, if $L
= \R \times \Lambda$ and we wish to prove (\ref{eq:funct2}), note that we shall assume that the capping
system $\Sy$ of $\Lambda_+ = \Lambda$ equals that of $\Lambda_-=
\Lambda$, which in turn equals the induced system of $L$.

To prove (\ref{eq:funct1}), we need to have the following
setup. Assume that $L_1$ is an exact Lagrangian cobordism from
$\Lambda_0$ to $\Lambda_1$, and that $L_2$ is an exact Lagrangian
cobordism from $\Lambda_1$ to $\Lambda_2$. Assume that $L_1$ and $L_2$
are equipped with spin structures such that the induced spin structure
on $\Lambda_1$ from $L_1$, regarded as the negative boundary of $L_1$,
equals the induced spin structure on $\Lambda_1$ induced from $L_2$,  regarded as the positive
boundary of $L_2$. Let $\Sy_i$ be fixed admissible systems of capping
operators for $\Lambda_i$, $i=0,1,2$, let $\Sy_{01}$ denote the
induced system on $L_1$, $\Sy_{12}$ the induced system on $L_2$, and
$\Sy_{02}$ the induced system on the concatenation $L_1\# L_2$.

\begin{proof}[Proof of Theorem \ref{thm:functorial}]
  We must prove that
  \begin{equation*}
    \Phi_{\R \times \Lambda, \Sy} = \id, \qquad \Phi_{L_2,\Sy_{12}}
    \circ \Phi_{L_1,\Sy_{01}} = \Phi_{L_1\#L_2, \Sy_{02}}.
  \end{equation*}

We prove the statement for the identity map. The statement for the
concatenation is similar to the proof of Theorem \ref{thm:kalman} and
details are left to the reader. Indeed, the only thing that has to be checked
in that case is the following. Let $u_0 \in \M_{L_1}(a, b_1\cdots b_m)$, $v_i \in
\M_{L_2}(b_i,b^i_1\cdots b^i_{m_i})
$, $i=1,\cdots, m$, $u_1 \in \M_{L_1\#L_2}(a, b_1^1 \cdots b_{m_1}^1\cdots b_1^m\cdots
b_{m_m}^m)$ be rigid disks. Then one must prove that the gluing
sequence for $\dbar_{\hat u_0} \#_{i=1}^m \dbar_{\hat v_i}$ induces the
same orientation on the total glued problem as the gluing sequence we
get when we glue the glued capping disks at $b_1,\dotsc,b_m$, with
respect to the $\Sy_1$-system, to the
dbar-problem $\dbar_{\hat u_1}$, capped off with the $\Sy_{02}$-system. But this follows from similar arguments as
in the proof of Theorem \ref{thm:kalman}.

We turn to the trivial cobordism. Let $a$ be a Reeb chord of
$\Lambda$, let $u = \R \times a$, and assume that $\Phi_{\R \times
  \Lambda, \Sy}(a) = (-1)^\sigma a$. Here the sign $(-1)^\sigma$
satisfies that $\Ker \dbar_u \simeq \R_t$ is given the capping orientation
$(-1)^\sigma \partial_t$ with respect to the system $\Sy$. We must prove
that $\sigma \equiv 0 \pmod{2}$.

Consider the situation when we concatenate two trivial
cobordisms. That is, let $L_1 = L_2=\R\times \Lambda$, let $u_1$
denote the disk $\R \times a$ of $L_1$, let $u_2$ denote the disk $\R
\times a$ of $L_2$ and let $u$ denote the disk $\R \times a$ in $L_1
\# L_2$. Let $\dbar^h_{a,+}$ denote the capping operator
associated to $a$, regarded as a Reeb chord of the positive end of
$L_1$, and let $\dbar^m_{a,-}$ denote the capping operator associated to
$a$, regarded as a Reeb chord of the negative end of $L_1$. Similarly, let
$\dbar^m_{a,+}, \dbar^l_{a,-}$ denote the capping operators of $a$
regarded as a Reeb chord of the positive and negative end of $L_2$, respectively.

From the concatenation we get the following exact sequence
\begin{equation}\label{eq:sista0}
0 \to  \kd_u \to
  \begin{bmatrix}
    \kd_{u_1}\\
    \kd_{u_2}
  \end{bmatrix}
  \to
  \R_t \to 0 \to 0,
\end{equation}
where all non-trivial spaces are isomorphic to $\R_t$. Notice that this sequence
induces $+\R_t$-orientation of $\kd_u$, given that $\kd_{u_i}$ is given
the orientation $(-1)^\sigma\R_t$, $i=1,2$.

Now, following our standard arguments, consider the exact gluing
sequence
\begin{equation}\label{eq:sista1}
  0\to \kd_T \to
  \begin{bmatrix}
    \begin{pmatrix}
      \kd_{u_1}\\
      \kd^h_{a,+}\\
      \kd^m_{a,-}
    \end{pmatrix}\\
    \begin{pmatrix}
      \kd_{u_2}\\
      \kd^m_{a,+}\\
      \kd^l_{a,-}
    \end{pmatrix}
  \end{bmatrix}
  \to
  \begin{bmatrix}
    \begin{pmatrix}
      \cd^h_{a,+}\\
      \cd^m_{a,-}
    \end{pmatrix}\\
    \R_t\\
    \R^{n+d_A}\\
    \begin{pmatrix}
      \cd^m_{a,+}\\
      \cd^l_{a,-}
    \end{pmatrix}\\
  \end{bmatrix}
  \to
\cd_T \to 0.
\end{equation}
By Lemma \ref{lma:kan}, this sequence induces the
canonical orientation of $\det \dbar_T$, where $T$ is the totally
glued problem, given that the capping operators are given their
capping orientation and $\kd_{u_i}$ is given the capping orientation $(-1)^\sigma
\R_t$, $i=1,2$. But, similar to our arguments in the proof of Theorem
\ref{thm:kalman}, the $\dbar_T$-problem can also be obtained from the
following gluing sequence
\begin{equation}\label{eq:sista2}
 0 \to \kd_T \to
  \begin{bmatrix}
    \begin{pmatrix}
      \kd_{u_1}\\
      \kd_{u_2}\\
      \kd^h_{a,+}\\
      \kd^l_{a,-}
    \end{pmatrix}\\
    \begin{pmatrix}
      \kd^m_{a,+}\\
      \R_t\\
      \kd^m_{a,-}
    \end{pmatrix}
  \end{bmatrix}
  \to
  \begin{bmatrix}
    \begin{pmatrix}
      \R_t\\
      \cd^h_{a,+}\\
      \cd^l_{a,-}
    \end{pmatrix}\\
    \R_t\\
    \R^{n+d_A}\\
    \begin{pmatrix}
      \cd^m_{a,+}\\
      \cd^m_{a,-}
    \end{pmatrix}\\
  \end{bmatrix}
  \to
\cd_T \to 0.
\end{equation}

We see that the orientation of $\det \dbar_T$ induced by this
sequence, where we assume that all spaces in columns 2 and 3 are
oriented as in sequence \eqref{eq:sista1}, equals the canonical orientation
times $(-1)^{\sigma_1}$,
\begin{equation}\label{eq:siggi}
 \sigma_1 = |a| + n+d_A+1.
\end{equation}
Indeed, this sign we get if we do the
usual rearrangements, comparing \eqref{eq:sista1} with
\eqref{eq:sista2}: we can remove the bottom-most $\R_t$ in columns 2 and
3 in \eqref{eq:sista2} without any cost, then move $\kd_{u_2}$ and the
gluing cokernel in \eqref{eq:sista1} to the corresponding position in
\eqref{eq:sista2} with the cost of $(-1)^{1+|a|+|a|+n+d_A+1}$, and then switch
positions of $\kd^m_{a,-}$ and $\kd^l_{a,-}$, and similar for the
cokernels, with a cost of $(-1)^{1+|a|}$.

Now we return to the original sequence \eqref{eq:sista2}. Using associativity of orientations under gluing (see [\cite{orientbok}, Section 3.2.3]),   
 we can use the sequence
\eqref{eq:sista0} to replace the pair $(\kd_{u_1}\oplus \kd_{u_2},
\R_t)$ in \eqref{eq:sista2} by the pair $(\kd_u, 0)$. We then get the exact sequence 
\begin{equation}\label{eq:sista3}
 0 \to \kd_T \to
  \begin{bmatrix}
    \begin{pmatrix}
      \kd_{u}\\
      \kd^h_{a,+}\\
      \kd^l_{a,-}
    \end{pmatrix}\\
    \begin{pmatrix}
      \kd^m_{a,+}\\
      \R_t\\
      \kd^m_{a,-}
    \end{pmatrix}
  \end{bmatrix}
  \to
  \begin{bmatrix}
    \begin{pmatrix}
      \cd^h_{a,+}\\
      \cd^l_{a,-}
    \end{pmatrix}\\
    \R_t\\
    \R^{n+d_A}\\
    \begin{pmatrix}
      \cd^m_{a,+}\\
      \cd^m_{a,-}
    \end{pmatrix}\\
  \end{bmatrix}
  \to
\cd_T \to 0.
\end{equation}
Moreover, from [\cite{orientbok}, Lemma 3.7] we get that this sequence induces the same orientation on $\det \dbar_T$ as the sequence \eqref{eq:sista2} does. That is, \eqref{eq:sista3} induces the canonical orientation times $(-1)^{\sigma_1}$, given that the capping operators are given their capping orientation and that $\kd_u$ is given the orientation induced by the sequence \eqref{eq:sista0}. Recall that the latter orientation is given by $\R_t$, which differs from the capping orientation of $\kd_u$ by a sign $(-1)^\sigma$. 

Now we change the orientation of $\kd_u$ to its capping orientation $(-1)^\sigma\R_t$, keeping the capping orientation of the capping operators. Then the orientation of $\det \dbar_T$ induced by  \eqref{eq:sista3} should change by a sign $(-1)^\sigma$, to be given by $(-1)^{\sigma_2}$ times the canonical orientation of $\det \dbar_T$, where 
\begin{equation}\label{eq:siggi2}
 \sigma_2 = \sigma + \sigma_1 = \sigma+ n + d_A +|a|+1.
\end{equation}
However, by our standard arguments it follows that this orientation should be given by $(-1)^{\sigma_3}$ times the canonical orientation of $\det \dbar_T$, where 
\begin{equation}\label{eq:siggi4}
 \sigma_3 = n + d_A +|a|+1.
\end{equation}
This follows from Lemma \ref{lma:kan} together with the fact that
 the upper part of \eqref{eq:sista3} corresponds to the capping sequence for $\dbar_u$ which by assumption is given the canonical orientation, and the lower part of \eqref{eq:sista3} corresponds to the gluing sequence for the capping disk at $a$ which by the assumptions together with Definition \ref{def:capor} is given the canonical orientation times  $(-1)^{ n + d_A +|a|+1}$.

Comparing \eqref{eq:siggi2} with \eqref{eq:siggi4} we get that we should have
\begin{equation*}
 (-1)^{\sigma+ n + d_A +|a|+1} = (-1)^{ n + d_A +|a|+1},
\end{equation*}
implying that $(-1)^{\sigma} = 1$. 
 This
concludes the proof of 
$\Phi_{\R \times \Lambda, \Sy} = \id$.  
\end{proof}

\subsection{A remark on orientation conventions}\label{sec:clock}
 The reason that we choose the convention \eqref{eq:gluconv} instead of the convention  
 \begin{equation*}
 0 \to \kd_{A\#B}
 \xrightarrow{\alpha}
 \begin{bmatrix}
  \kd_A \\
  \kd_B
 \end{bmatrix}
 \xrightarrow{\beta}
\begin{bmatrix}
 \cd_A \\
 \cd_B
\end{bmatrix}
 \xrightarrow{\gamma}
\cd_{A\#B} \to 0
\end{equation*}
from \cite{orientbok}, is that our choice seems to simplify the calculations in the proofs. That is, it is easier to have the $\R_t$-summand of the kernel of the $\dbar_u$-operator, for $u$ a pseudo-holomorphic disks of $\R \times \Lambda$, as close to the top of the columns in the gluing sequences as possible. 

In the same time as we want to use this simplification, we also want to take advantage of the calculations in the proofs of [\cite{orientbok}, Lemma 4.9 and Lemma 4.11]. This forces us to have all exact sequences (except for the ones for the glued capping disks) from \cite{orientbok} ''mirrored'' in the horizontal axis. For example, we have to glue the capping operators in the clockwise direction, starting at the positive puncture, and we also have to choose the orientations of the spaces of conformal variations to be ''opposite'' to the one in \cite{orientbok}. 

The reason that we keep the same convention for the gluing sequences for the glued capping disks as in \cite{orientbok} (these are not mirrored!) is to get rid of the sign $(-1)^{(n-1)(|a|+1)}$ in Remark \ref{rmk:dumsign}. This also has to do with the sign in Definition \ref{def:capor}. However, the reason for this sign is mostly to get the calculations in the proof of Theorem \ref{thm:kalman} and Theorem \ref{thm:functorial}  to work out well.

\section{Morse flow trees and abstract perturbations}\label{sec:trees}
In \cite{tobhonda} the techniques of abstractly perturbed flow trees are used to give explicit descriptions of DGA-morphisms $\Phi_L$ associated to elementary Legendrian isotopies, with coefficients in $\Z_2$. Here we explain how this can be done also with integer coefficients.

Let $L \subset \R \times J^1(M)$ be an exact Lagrangian cobordism. To this cobordism we associate a \emph{Morse cobordism} $L^{MO}$, which will be exact Lagrangian isotopic to $L$ relative the ends. See [\cite{tobhonda}, Section 2]. The advantage of considering $L^{MO}$ instead of $L$ is that we can use the Morse flow tree techniques from \cite{trees} to define $\Phi_{L^{MO}}$ to be given by a count of rigid flow trees instead of rigid disks.

For a definition of Morse flow trees we refer to [\cite{trees}, Section 2.2] and [\cite{tobhonda}, Section 4]. Briefly, these  trees are built out of flow lines of local gradient differences associated to $L^{MO}$, and there is a one-to-one correspondence between the rigid Morse flow trees and the rigid pseudo-holomorphic disks of $L^{MO}$. Thus, if we let $\M_{T, L}(a, {\bf b})$ denote the moduli space of Morse flow trees of $L^{MO}$ with positive puncture $a$ and negative punctures ${\bf b}$, it follows from [\cite{tobhonda}, Lemma 5.12] that $\Phi_L$ can be given by
\begin{equation*}
 \Phi_L(a) = \sum_{\dim \M_{T, L}(a, {\bf b}) =0}  |\M_{T, L}(a, {\bf b})| {\bf b}.
\end{equation*}
Here $ |\M_{T, L}(a, {\bf b})|$ denotes the algebraic count of
elements in the moduli space, where we have used the oriented
identification of $\M_{T, L}(a, {\bf b})$ and $\M_L(a,{\bf b})$ from
[\cite{orienttrees}, Theorem 1.1]. In summary, the orientation of a tree $\Gamma \in
\M_{T,L}(a,{\bf b})$ is defined by considering the cotangent lift of the
tree, which gives rise to a Lagrangian boundary condition for a corresponding punctured disk, and we get an associated linearized $\dbar$-operator $\dbar_\Gamma$. Moreover, we can glue  capping operators associated to the punctures of $\Gamma$ to this operator, to get a corresponding fully capped problem $\dbar_{\hat \Gamma}$ on the non-punctured disk. This gives rise to exactly the same gluing sequences as in the case of true $J$-holomorphic disks, and the \emph{capping orientation of $\Gamma$} is defined completely analogous to how it is done for $J$-holomorphic disks. For details we refer to [\cite{orienttrees}, Section 4].

To get the explicit formulas for the DGA-maps in \cite{tobhonda}, the trees of $L^{MO}$ are perturbed. See [\cite{tobhonda}, Section 6.3]. This is first done by a geometric perturbation, which is a perturbation of $L^{MO}$ together with a perturbation of the Riemannian metric. If we extend the orientation scheme of $\M_{T,L}$ to also be defined for the geometrically perturbed trees, it follows by straightforward arguments that for a generic geometric perturbation, the algebraic count of trees in $\M_{T,L}$ will be equal to the algebraic count of rigid, geometrically perturbed trees. We let $\Phi_{L,g}$ denote the 
DGA-map defined by the count of the rigid geometric perturbed trees. 

Next we consider an abstract perturbation of the trees, as defined in [\cite{tobkal}, Section 3.3 -- 3.4], compare also with [\cite{tobhonda}, Section 6.3.1]. In [\cite{tobhonda}, Lemma 6.4] it is proven that the induced map $\Phi_{L,a}$, given by a count of rigid, abstractly perturbed flow trees, is a DGA-morphism which is chain homotopic to $\Phi_{L,g}$, given that we are using $\Z_2$-coefficients. We claim that this chain homotopy can be lifted to $\Z$-coefficients.

Indeed, the proof of [\cite{tobhonda}, Lemma 6.4]  makes use of a 1-parameter family of abstract perturbations, starting at the geometric perturbation (which we can interpret as an abstract perturbation) and ending at the desired abstract perturbation. The chain homotopy is then defined by a  count of certain Morse flow trees of $L^{MO} \times D$, where $D$ is the unit disk and where the trees are induced by the  1-parameter family of perturbations.

From [\cite{tobkal}, Section 3.4] it follows that the boundary conditions of the abstractly perturbed trees are close to being boundary conditions for true trees. Hence we can extend the orientation scheme for flow trees to also include abstractly perturbed trees, both for the trees occurring in the formula for $\Phi_{L,a}$ and also for the flow trees in the chain homotopy just described. Thus it follows that the algebraic count of abstractly perturbed trees given by $\Phi_{L,a}$ is signed chain homotopic to the algebraic count of geometrically perturbed trees given by $\Phi_{L,g}$. 
It follows that $\Phi_{L,a}$ is chain homotopic to $\Phi_L$ over $\Z$.

\bibliographystyle{halpha}
\bibliography{c_main} 

\newcommand{\etalchar}[1]{$^{#1}$}
\def\cprime{$'$}
\begin{thebibliography}{CDRGG15}

\bibitem[BEH{\etalchar{+}}03]{compact}
F.~Bourgeois, Y.~Eliashberg, H.~Hofer, K.~Wysocki, and E.~Zehnder.
\newblock Compactness results in symplectic field theory.
\newblock {\em Geom. Topol.}, 7:799--888, 2003.

\bibitem[CDRGG]{floerlag2}
Baptiste Chantraine, Georgios Dimitroglou~Rizell, Paolo Ghiggini, and Roman
  Golovko.
\newblock Floer theory for {L}agrangian cobordisms.
\newblock arXiv:1511.09471.

\bibitem[CDRGG15]{floerlag}
Baptiste Chantraine, Georgios Dimitroglou~Rizell, Paolo Ghiggini, and Roman
  Golovko.
\newblock Floer homology and {L}agrangian concordance.
\newblock In {\em Proceedings of the {G}\"okova {G}eometry-{T}opology
  {C}onference 2014}, pages 76--113. G\"okova Geometry/Topology Conference
  (GGT), G\"okova, 2015.

\bibitem[Che02]{chekanov}
Yuri Chekanov.
\newblock Differential algebra of {L}egendrian links.
\newblock {\em Invent. Math.}, 150(3):441--483, 2002.

\bibitem[CM]{rm}
Roger Casals and Emmy Murphy.
\newblock Legendrian {F}ronts for {A}ffine {V}arieties.
\newblock arXiv:1610.06977.

\bibitem[DR16]{Georgios}
Georgios Dimitroglou~Rizell.
\newblock Lifting pseudo-holomorphic polygons to the symplectisation of
  {$P\times\Bbb{R}$} and applications.
\newblock {\em Quantum Topol.}, 7(1):29--105, 2016.

\bibitem[DRG14]{georgios-roman}
Georgios Dimitroglou~Rizell and Roman Golovko.
\newblock On homological rigidity and flexibility of exact {L}agrangian
  endocobordisms.
\newblock {\em Internat. J. Math.}, 25(10):1450098, 24, 2014.

\bibitem[EES05a]{legsub}
Tobias Ekholm, John Etnyre, and Michael Sullivan.
\newblock The contact homology of {L}egendrian submanifolds in {${\mathbb
  {R}}^{2n+1}$}.
\newblock {\em J. Differential Geom.}, 71(2):177--305, 2005.

\bibitem[EES05b]{orientbok}
Tobias Ekholm, John Etnyre, and Michael Sullivan.
\newblock Orientations in {L}egendrian contact homology and exact {L}agrangian
  immersions.
\newblock {\em Internat. J. Math.}, 16(5):453--532, 2005.

\bibitem[EES07]{pr}
Tobias Ekholm, John Etnyre, and Michael Sullivan.
\newblock Legendrian contact homology in {$P\times\mathbb{R}$}.
\newblock {\em Trans. Amer. Math. Soc.}, 359(7):3301--3335 (electronic), 2007.

\bibitem[EGH00]{sft}
Y.~Eliashberg, A.~Givental, and H.~Hofer.
\newblock Introduction to symplectic field theory.
\newblock {\em Geom. Funct. Anal.}, (Special Volume, Part II):560--673, 2000.
\newblock GAFA 2000 (Tel Aviv, 1999).

\bibitem[EHK16]{tobhonda}
Tobias Ekholm, Ko~Honda, and Tam\'as K\'alm\'an.
\newblock Legendrian knots and exact {L}agrangian cobordisms.
\newblock {\em J. Eur. Math. Soc. (JEMS)}, 18(11):2627--2689, 2016.

\bibitem[EK08]{tobkal}
Tobias Ekholm and Tam{\'a}s K{\'a}lm{\'a}n.
\newblock Isotopies of {L}egendrian 1-knots and {L}egendrian 2-tori.
\newblock {\em J. Symplectic Geom.}, 6(4):407--460, 2008.

\bibitem[Ekh07]{trees}
Tobias Ekholm.
\newblock Morse flow trees and {L}egendrian contact homology in 1-jet spaces.
\newblock {\em Geom. Topol.}, 11:1083--1224, 2007.

\bibitem[Ekh08]{ratsft}
Tobias Ekholm.
\newblock Rational symplectic field theory over {$\mathbb{Z}_2$} for exact
  {L}agrangian cobordisms.
\newblock {\em J. Eur. Math. Soc. (JEMS)}, 10(3):641--704, 2008.

\bibitem[Ekh16]{toblose}
Tobias Ekholm.
\newblock Non-loose {L}egendrian spheres with trivial contact homology {DGA}.
\newblock {\em J. Topol.}, 9(3):826--848, 2016.

\bibitem[EL]{el}
Tobias Ekholm and Yanki Lekili.
\newblock Duality between {L}agrangian and {L}egendrian invariants.
\newblock arXiv:1701.01284.

\bibitem[Eli98]{eliinv}
Yakov Eliashberg.
\newblock Invariants in contact topology.
\newblock In {\em Proceedings of the {I}nternational {C}ongress of
  {M}athematicians, {V}ol. {II} ({B}erlin, 1998)}, number Extra Vol. II, pages
  327--338, 1998.

\bibitem[ENS02]{sabloffng}
John~B. Etnyre, Lenhard~L. Ng, and Joshua~M. Sabloff.
\newblock Invariants of {L}egendrian knots and coherent orientations.
\newblock {\em J. Symplectic Geom.}, 1(2):321--367, 2002.

\bibitem[FH93]{Floer-Hofer}
A.~Floer and H.~Hofer.
\newblock Coherent orientations for periodic orbit problems in symplectic
  geometry.
\newblock {\em Math. Z.}, 212(1):13--38, 1993.

\bibitem[FOOO09]{fooo}
Kenji Fukaya, Yong-Geun Oh, Hiroshi Ohta, and Kaoru Ono.
\newblock {\em Lagrangian intersection {F}loer theory: anomaly and obstruction.
  {P}art {II}}, volume~46 of {\em AMS/IP Studies in Advanced Mathematics}.
\newblock American Mathematical Society, Providence, RI, 2009.

\bibitem[Kar]{orienttrees}
Cecilia Karlsson.
\newblock Orientations of {M}orse flow trees in {L}egendrian contact homology.
\newblock arXiv:1601.07346.

\bibitem[MS12]{jholo}
Dusa McDuff and Dietmar Salamon.
\newblock {\em {$J$}-holomorphic curves and symplectic topology}, volume~52 of
  {\em American Mathematical Society Colloquium Publications}.
\newblock American Mathematical Society, Providence, RI, second edition, 2012.

\bibitem[Zin16]{z}
Aleksey Zinger.
\newblock The determinant line bundle for {F}redholm operators: construction,
  properties, and classification.
\newblock {\em Math. Scand.}, 118(2):203--268, 2016.

\end{thebibliography}

\end{document}